\newcommand{\R}{\mathbb{R}}
\newcommand{\N}{\mathbb{N}}
\newcommand{\E}{\mathbb{E}}
\newcommand{\Z}{\mathbb{Z}}
\newcommand{\Sd}{\textup{Sd}}
\newcommand{\CW}{\textup{CW}}
\newcommand{\id}{\textup{id}}
\newcommand{\tT}{\mathcal{T}}
\newcommand{\B}{\mathcal{B}}
\newcommand{\mmC}{\bold{C}}
\newtheorem{thm}{Theorem}[section]
\newtheorem{lem}[thm]{Lemma}
\newtheorem{prop}[thm]{Proposition}
\newtheorem{cor}[thm]{Corollary}
\newtheorem{rem}[thm]{Remark}
\newtheorem{ex}[thm]{Example}
\newtheorem{defn}[thm]{Definition}
\begin{document}

\title{Tilings, packings and expected Betti numbers in simplicial complexes}

\author{Nerm{\accent95\i}n Salepc{\accent95\i} and Jean-Yves Welschinger}

\maketitle

\begin{abstract}
Let $K$ be a finite simplicial complex. We prove that the normalized expected Betti numbers of a random subcomplex in its $d$-th barycentric subdivision $\Sd^d(K)$ converge to universal limits as $d$ grows to $+\infty$. In codimension one, we use canonical filtrations of $\Sd^d(K)$ to upper estimate these limits and get a monotony theorem which makes it possible to improve these estimates given any packing of disjoint simplices in $\Sd^d(K)$. We then introduce a notion of tiling of simplicial complexes having the property that skeletons and barycentric subdivisions of tileable simplicial complexes are tileable.   
This enables us to tackle the problem: How many disjoint simplices can be packed in $\Sd^d(K), d\gg0?$ 
 
\vspace{0.5cm}
{Keywords :  simplicial complex, tilings, packings, barycentric subdivision, Betti numbers, shellable complex.}

\textsc{Mathematics subject classification 2010: }{55U10, 60C05, 52C17, 52C22.}

\end{abstract}

\section{Introduction}
\subsection{Average Betti numbers}\label{Sect_Intro1}
Let $K$ be a finite $n$-dimensional simplicial  complex, $n>0$, and
$\Sd(K)$ be its first barycentric subdivision. In \cite{SW2} we introduced, for every $k\in \{1,\ldots, n\}$, a construction of codimension $k$ subcomplexes  $V_\epsilon$ of $\Sd(K)$ parametrized by  the $(k-1)$-dimensional simplicial cochains $\epsilon \in C^{k-1}(K;\Z/2\Z)$, see Definition~\ref{Defn_V}. The finite set $C^{k-1}(K;\Z/2\Z)$
being canonically a probability  space, or rather equipped with a family of probability measures $(\mu_{\nu})_{\nu\in[0,1]}$, where $\mu_\nu$ is the product measure  for which the probability that  the cochain $\epsilon$ takes the value 0 on a given $(k-1)$-simplex of $K$ is $\nu$ (see \cite{SW2}), this construction  produces a random   variable $\epsilon\in C^{k-1}(K;\Z/2\Z)\mapsto  V_\epsilon \subset \Sd(K)$ which raises the following questions of random topology:  What is the expected topology  of $V_\epsilon$, e.g. its expected Betti numbers? How does this expected topology behave under large iterated barycentric subdivisions? For every $p\in \{0,\ldots, n-k\}$, we denote the $p$-th Betti number of $V_\epsilon$ by $b_p(V_\epsilon)=\dim H_p(V_\epsilon; \Z/2\Z)$  and its mathematical expectation after $d$ barycentric subdivision by $\E_{\nu,d}(b_p)=\int_{C^{k-1}(\Sd^d(K);\Z/2\Z)}b_p(V_\epsilon)d\mu_{\nu}(\epsilon)$, $d\geq 0$. We proved in \cite{SW2} that there exist  universal constants  $c_p^{\pm}(n,k),$ such that

$$c_p^-(n,k)\leq \liminf_{d\to +\infty}\frac{\E_{\nu,d}(b_p)}{(n+1)!^df_n(K)} \leq \limsup_{d\to +\infty}\frac{\E_{\nu,d}(b_p)}{(n+1)!^df_n(K)}\leq c_p^+(n,k),$$  where $f_n(K)$ denotes the number of $n$-dimensional simplices of $K$. These results produced  counterparts  in this combinatorial framework to the ones obtained in \cite{GW15} and \cite{GW162, GW14} on the expected Betti numbers of random real algebraic submanifolds of  real projective manifolds or random nodal sets in smooth manifolds respectively (see also \cite{LL, NS1,NS2}). Our first result is the following:

\begin{thm}\label{Thm_Ed} 
Let $n>0$, $k\in\{1,\ldots,n\}$ and $p\in\{0,\ldots,n-k\}$. Then, for every finite $n$-dimensional simplicial complex $K$ and every $\nu\in[0,1]$, the sequence $\big(\frac{\E_{\nu,d}(b_p)}{f_n(K)(n+1)!^d}\big)_{d\geq 0}$ converges and its limit does not depend on $K$. 

\end{thm}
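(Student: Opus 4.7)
Since $f_n(\Sd^d(K)) = (n+1)!^d f_n(K)$, the quantity under study is the average of $\E_{\nu,d}(b_p)$ per top-dimensional simplex of $\Sd^d(K)$. My plan has two parts: (i) localize the Betti number via Mayer--Vietoris along the top simplices of $K$, reducing the theorem to the case $K=\Delta^n$; (ii) prove convergence for $K = \Delta^n$ by a self-similarity argument.

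For (i), consider the cover $\Sd^{d+1}(K) = \bigcup_{\sigma} \Sd^{d+1}(\sigma)$, where $\sigma$ ranges over the $n$-simplices of $K$ (together with any maximal faces of lower dimension). Pairwise intersections are $\Sd^{d+1}(\sigma \cap \tau)$, of dimension at most $n-1$. The construction of $V_\epsilon$ in \cite{SW2} is local at each simplex, so $V_\epsilon \cap \Sd^{d+1}(\sigma)$ is determined by the restriction $\epsilon|_{\Sd^d(\sigma)}$; for each $n$-simplex $\sigma$, the restricted random variable has the same law as $V_{\epsilon'}$ built from a uniform $(k-1)$-cochain $\epsilon'$ on $\Sd^d(\Delta^n)$. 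Iterating the Mayer--Vietoris inequality
\[
\bigl|b_p(A \cup B) - b_p(A) - b_p(B)\bigr| \leq b_p(A\cap B) + b_{p-1}(A\cap B)
\]
over this cover, then taking expectations and bounding intersection Betti numbers by face numbers, yields
\[
\bigl|\E_{\nu,d}^{K}(b_p) - f_n(K)\,\E_{\nu,d}^{\Delta^n}(b_p)\bigr| = O_K\bigl((n!)^d\bigr).
\]
Since $(n!)^d = o((n+1)!^d)$, dividing by $f_n(K)(n+1)!^d$ shows that the limit, once established for $\Delta^n$, coincides with the limit for $K$.

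For (ii), apply the same decomposition to $\Sd^{d+1}(\Delta^n) = \Sd^d(\Sd(\Delta^n))$: the complex $\Sd(\Delta^n)$ has $(n+1)!$ top simplices, each a copy of $\Delta^n$. One obtains
\[
\E_{\nu,d+1}^{\Delta^n}(b_p) = (n+1)!\,\E_{\nu,d}^{\Delta^n}(b_p) + O\bigl((n!)^d\bigr),
\]
so that $\ell_d := \E_{\nu,d}^{\Delta^n}(b_p)/(n+1)!^d$ satisfies $\ell_{d+1}-\ell_d = O((n+1)^{-d})$. Hence $(\ell_d)$ is Cauchy and converges, completing the proof.

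The main obstacle is controlling the Mayer--Vietoris error uniformly in $d$. This requires the a priori bound $\E_{\nu,d}\bigl(b_p(V_\epsilon \cap \Sd^{d+1}(\tau))\bigr) = O((j+1)!^d)$ for each $j$-dimensional simplex $\tau$, which follows either from the universal upper bounds $c_p^+(j,k)$ of \cite{SW2} applied dimension by dimension, or from a direct face-count majoration. The strict inequality $(j+1)! < (n+1)!$ for $j<n$ is precisely what renders the boundary contributions negligible after normalization by $f_n(K)(n+1)!^d$.
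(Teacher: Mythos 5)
Your proof is correct, and it rests on the same two essential ingredients as the paper's argument: the locality of the dual-block construction together with the product structure of $\mu_\nu$ (so that $V_\epsilon\cap\sigma$ has the law of the construction on $\Sd^d(\Delta_n)$ for each top simplex $\sigma$ of the coarser complex), and the fact that every error term lives on a subdivided complex of dimension at most $n-1$, hence is $O(n!^d)=o((n+1)!^d)$. The implementation differs, though. The paper compares two scales $m\le d$ at once: it covers $|K|$ by the open top simplices $A_{d,m}$ of $\Sd^{d-m}(K)$ and a dual-block neighborhood $B_{d,m}$ of the codimension-one skeleton, runs a single two-set Mayer--Vietoris sequence, and controls the error terms through explicit deformation retractions of $B_{d,m}\cap V_\epsilon$ and $A_{d,m}\cap B_{d,m}\cap V_\epsilon$ onto subdivided $(n-1)$-dimensional complexes (Proposition~\ref{Prop_Retract}, Corollary~\ref{Cor_Retract}, Proposition~\ref{Prop_Cn}); this yields $|u_d-u_m|\le c(n)(n+1)^{-m}+O\big((n+1)^{-d}\big)$ and, for a general $K$, $|v_d-u_m|$ bounded by the same quantity, so convergence and independence of $K$ come out of one estimate. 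You instead decompose along closed top simplices (those of $\Sd(\Delta_n)$ for the recurrence, those of $K$ itself for the reduction to the simplex), iterate the Mayer--Vietoris inequality a bounded number of times, and bound the intersection terms by face counts; the resulting one-step recurrence $\E_{\nu,d+1}=(n+1)!\,\E_{\nu,d}+O(n!^d)$ with summable increments $O((n+1)^{-d})$ is quantitatively equivalent to the paper's two-scale Cauchy estimate, and it avoids the retraction machinery at the cost of an iterated (but boundedly long) Mayer--Vietoris argument; your same-scale comparison of $K$ with $f_n(K)$ copies of $\Delta_n$ likewise replaces the paper's treatment, the non-pure part of $K$ being negligible exactly as in the paper's handling of $K\setminus\widetilde K$. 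Two small points you should make explicit: the restricted cochain is $\mu_\nu$-distributed (product Bernoulli), not uniform unless $\nu=\tfrac12$; and the locality claim requires observing that the dual block of a $k$-simplex not contained in $\sigma$ is disjoint from $|\sigma|$ (all its vertices are barycenters of simplices containing that $k$-simplex), which is what makes $V_\epsilon\cap|\sigma|$ exactly the intrinsic construction inside $\sigma$ -- both are routine.
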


We denote by $e_{p,\nu}(n,k)$ the limit given by Theorem~\ref{Thm_Ed}. Recall that with the exception of the case $p=0$ (see \cite{NS2}), the counterpart of
Theorem~\ref{Thm_Ed}   in the theory of random polynomials (\cite{GW15}) or random nodal sets (\cite{NS1, NS2, GW14, GW162, LL, SaWi}) is open.
In the case of the standard $n$-simplex $\Delta_n$, we likewise set $\tilde{b}_p(V_\epsilon)=\sum b_p(\Sigma_\epsilon)$, where the sum is taken over all connected components $\Sigma_\epsilon$ of $V_\epsilon$ which do not intersect the boundary of $\Delta_n.$
We then set $\E_{\nu,d}(\tilde{b}_p)=\int_{C^{k-1}(\Sd^d(\Delta_n);\Z/2\Z)}\tilde{b}_p(V_\epsilon)d\mu_{\nu}(\epsilon)$ and get:

\begin{thm}\label{Thm_btilde}
Let $n>0$, $k\in\{1,\ldots, n\}$ and $p\in \{0, \ldots, n-k\}.$ Then, for every $\nu\in[0,1]$, the sequence $\big(\frac{\E_{\nu,d}(\tilde{b}_p)}{(n+1)!^d}\big)_{d\geq 0}$ is increasing and bounded from above.

\end{thm}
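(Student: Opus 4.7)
The plan is to handle boundedness and monotonicity separately. For the upper bound, I estimate $\tilde{b}_p(V_\epsilon)$ by the total Betti number $b_p(V_\epsilon)$ and invoke Theorem~\ref{Thm_Ed}: writing $b_p(V_\epsilon)=\sum_\Sigma b_p(\Sigma)$ as a sum over all connected components of $V_\epsilon$ and $\tilde{b}_p(V_\epsilon)$ as the subsum over components disjoint from $\partial\Delta_n$, one has the pointwise inequality $\tilde{b}_p(V_\epsilon)\leq b_p(V_\epsilon)$. Since $f_n(\Delta_n)=1$, Theorem~\ref{Thm_Ed} applied to $K=\Delta_n$ gives convergence, and in particular boundedness, of $\E_{\nu,d}(b_p)/(n+1)!^d$, which yields the required upper bound.

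For monotonicity, let $\sigma_1,\ldots,\sigma_{(n+1)!}$ denote the top-dimensional simplices of $\Sd(\Delta_n)$; each is combinatorially isomorphic to $\Delta_n$ and each relative interior $\mathring{\sigma}_i$ is disjoint from $\partial\Delta_n$. The decomposition $\Sd^{d+1}(\Delta_n)=\bigcup_i\Sd^d(\sigma_i)$ realises $\Sd^{d+1}(\Delta_n)$ as a union of $(n+1)!$ combinatorial copies of $\Sd^d(\Delta_n)$, and similarly $\Sd^{d+2}(\Delta_n)=\bigcup_i\Sd^{d+1}(\sigma_i)$. For a random cochain $\epsilon\in C^{k-1}(\Sd^{d+1}(\Delta_n);\Z/2\Z)$ distributed according to $\mu_\nu$, the restriction $\epsilon_i:=\epsilon|_{\Sd^d(\sigma_i)}$ has marginal distribution $\mu_\nu$ on $C^{k-1}(\Sd^d(\sigma_i);\Z/2\Z)$; I will not need any independence across different indices $i$. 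Assuming the locality property $V_\epsilon\cap\Sd^{d+1}(\sigma_i)=V_{\epsilon_i}$ of Definition~\ref{Defn_V}, every connected component of $V_{\epsilon_i}$ that avoids $\partial\sigma_i$ lies in $\mathring{\sigma}_i$, is therefore also a component of $V_\epsilon$, and avoids $\partial\Delta_n$. Components arising from different indices stay disjoint, so
$$\tilde{b}_p(V_\epsilon)\geq \sum_{i=1}^{(n+1)!}\tilde{b}_p(V_{\epsilon_i}).$$
Passing to expectations by linearity and using the identical marginal distribution of each $\epsilon_i$ gives $\E_{\nu,d+1}(\tilde{b}_p)\geq (n+1)!\,\E_{\nu,d}(\tilde{b}_p)$, and division by $(n+1)!^{d+1}$ is exactly the monotonicity claimed.

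The only non-formal ingredient is the locality identity $V_\epsilon\cap\Sd^{d+1}(\sigma_i)=V_{\epsilon_i}$: one must check, from the explicit description of $V_\epsilon$ in \cite{SW2}, that the piece of $V_\epsilon$ lying inside $\Sd(\tau)$ for an $n$-simplex $\tau\subset\sigma_i$ of $\Sd^d(\Delta_n)$ depends only on the values of $\epsilon$ on the $(k-1)$-faces of $\tau$. All such faces lie in $\Sd^d(\sigma_i)$, so this property should read off directly from Definition~\ref{Defn_V}; it is the single combinatorial input required before the rest of the argument becomes formal. A minor side verification is the connectedness argument showing that a component of $V_{\epsilon_i}$ missing $\partial\sigma_i$ cannot be enlarged in $V_\epsilon$ without crossing $\partial\sigma_i$, which follows because $\sigma_i$ is closed in $\Delta_n$ and such a component is already contained in the open subset $\mathring{\sigma}_i$.
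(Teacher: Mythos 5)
Your proposal is correct, and its core — the monotonicity argument — is essentially the paper's own: the paper compares $\E_{\nu,d}(\tilde{b}_p)$ with $\E_{\nu,m}(\tilde{b}_p)$ by summing $\tilde{b}_p(V_\epsilon\cap\sigma)$ over the $n$-simplices $\sigma$ of an intermediate subdivision $\Sd^{d-m}(\Delta_n)$ and using that $\mu_\nu$ is a product measure, which is exactly your decomposition $\Sd^{d+1}(\Delta_n)=\bigcup_i\Sd^d(\sigma_i)$ with marginals (the paper, like you, uses the locality $V_\epsilon\cap|\sigma|=V_{\epsilon|_\sigma}$ implicitly; your sketch of why it holds, and why an interior component of $V_{\epsilon_i}$ is a genuine component of $V_\epsilon$ avoiding $\partial\Delta_n$, is sound). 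Where you diverge is the upper bound: the paper does not invoke Theorem~\ref{Thm_Ed} but uses the elementary estimate $\tilde{b}_p(V_\epsilon)\leq f_p(\Sd^{d+1}(\Delta_n))$, valid since $V_\epsilon$ is a subcomplex of $\Sd^{d+1}(\Delta_n)$, together with the known convergence of $\big(f_p(\Sd^{d+1}(\Delta_n))/(n+1)!^d\big)_d$ from the face-vector asymptotics of barycentric subdivisions. Your route via $\tilde{b}_p\leq b_p$ and Theorem~\ref{Thm_Ed} is logically admissible, because the paper's proof of Theorem~\ref{Thm_Ed} (Mayer--Vietoris plus Proposition~\ref{Prop_Cn}) nowhere uses Theorem~\ref{Thm_btilde}, so there is no circularity; but be aware that the paper proves Theorem~\ref{Thm_btilde} first and deliberately keeps its proof independent of the much heavier convergence theorem, and the face-count bound achieves boundedness with far less machinery. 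If you want your argument to stand on its own in the paper's order of exposition, replace the appeal to Theorem~\ref{Thm_Ed} by this one-line bound on the number of $p$-simplices.
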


We denote by $\tilde{e}_{p,\nu}(n,k)$ the limit of the sequence given by Theorem~\ref{Thm_btilde}. We do not know whether $e_{p,\nu}(n,k)=\tilde{e}_{p,\nu}(n,k)$
 or not, except when $p=0$. It is related to a problem of percolation which we introduce  and discuss in \S~\ref{Sect_Per}, see Theorem~\ref{Thm_Per} (or \cite{BG} for another related problem of percolation). Nevertheless we get:  
 
 \begin{thm}\label{Thm_eetilde}
 Under the hypothesis of Theorem~\ref{Thm_btilde}, $e_{p,\nu}(n,k)\geq \tilde{e}_{p,\nu}(n,k)>0$. Moreover, $e_{0,\nu}(n,k)= \tilde{e}_{0,\nu}(n,k).$
 \end{thm}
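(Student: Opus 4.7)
The plan is to handle the three assertions in turn, taking $K=\Delta_n$ (so that $f_n(K)=1$) so that Theorem~\ref{Thm_Ed} gives $e_{p,\nu}(n,k)=\lim_d \E_{\nu,d}(b_p)/(n+1)!^d$. The inequality $e_{p,\nu}(n,k)\geq \tilde{e}_{p,\nu}(n,k)$ is then immediate: since $V_\epsilon$ is the disjoint union of its connected components $\Sigma$, we have $b_p(V_\epsilon)=\sum_\Sigma b_p(\Sigma)$, and $\tilde{b}_p(V_\epsilon)$ is by definition the same sum restricted to the components avoiding $\Bd\Delta_n$; hence $\tilde{b}_p\leq b_p$ pointwise, and the claim follows by integrating, dividing by $(n+1)!^d$, and passing to the limit.

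For the strict positivity $\tilde{e}_{p,\nu}(n,k)>0$, the strategy is to exhibit, at some finite subdivision level $d_0$, an explicit cochain $\epsilon_0\in C^{k-1}(\Sd^{d_0}(\Delta_n);\Z/2\Z)$ such that $V_{\epsilon_0}$ contains a connected component disjoint from $\Bd\Delta_n$ with nontrivial $p$-th Betti number. A natural candidate is to realize a PL $p$-sphere of codimension $k$ sitting in the interior of $\Delta_n$ as a union of cells dual to a chosen $p$-cycle of $\Sd^{d_0}(\Delta_n)$, and to set $\epsilon_0$ so that this sphere appears as a component of $V_{\epsilon_0}$. For $\nu\in(0,1)$ the cochain $\epsilon_0$ has positive $\mu_\nu$-probability, so $\E_{\nu,d_0}(\tilde{b}_p)>0$, and the monotonicity in Theorem~\ref{Thm_btilde} propagates this positivity to the limit. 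The extreme cases $\nu\in\{0,1\}$ require that the canonical deterministic $V_\epsilon$ itself already contain such an interior component at some finite $d_0$, which should be arranged by a direct local construction. I expect the main obstacle to be precisely this explicit construction: placing a closed codimension-$k$ subcomplex with nontrivial $p$-th Betti number in the interior of $\Delta_n$ as a genuine connected component of some $V_{\epsilon_0}$ is delicate and may depend subtly on $n$, $k$ and $p$, especially near the boundary of the admissible range $p\leq n-k$.

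Finally, for the equality $e_{0,\nu}(n,k)=\tilde{e}_{0,\nu}(n,k)$, I would bound the nonnegative quantity $b_0(V_\epsilon)-\tilde{b}_0(V_\epsilon)$ by the number of connected components of $V_\epsilon$ that meet $\Bd\Delta_n$, which is in turn bounded by the number of vertices of $V_\epsilon\cap \Sd^d(\Bd\Delta_n)$, and hence by the total number of vertices of $\Sd^d(\Bd\Delta_n)$. Since $\Bd\Delta_n$ is $(n-1)$-dimensional with $n+1$ facets, its $d$-th barycentric subdivision carries only $O(n!^d)$ vertices. Dividing this deterministic bound by $(n+1)!^d$ gives a quantity of order $(n+1)^{-d}$, which tends to $0$. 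Combined with the first part this yields $e_{0,\nu}(n,k)=\tilde{e}_{0,\nu}(n,k)$, completing the plan.
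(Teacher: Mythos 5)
Your first and third steps are sound and essentially coincide with the paper's proof: the inequality $e_{p,\nu}(n,k)\geq\tilde e_{p,\nu}(n,k)$ is obtained exactly as you say, and for $p=0$ the paper also bounds the defect $b_0(V_\epsilon)-\tilde b_0(V_\epsilon)$ by the number of components meeting $\partial\Delta_n$, hence by $b_0(V_\epsilon\cap\partial\Delta_n)$, whose expectation is $O(n!^d)$ by Theorem~1.2 of \cite{SW2}; your variant, bounding it deterministically by the vertex count of $V_\epsilon\cap\partial\Delta_n\subset\Sd^{d+1}(\partial\Delta_n)$, which is $O(n!^d)$ by the face-number asymptotics of barycentric subdivisions, is a slightly more elementary route to the same $O\big((n+1)^{-d}\big)$ error term and is correct (note only that $V_\epsilon$ lives in $\Sd^{d+1}(\Delta_n)$, not $\Sd^d(\Delta_n)$, which changes nothing asymptotically).

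The genuine gap is the positivity $\tilde e_{p,\nu}(n,k)>0$. Your plan reduces it to exhibiting, at some finite level $d_0$, a cochain $\epsilon_0$ whose $V_{\epsilon_0}$ has an interior component with $b_p\neq 0$, but you defer precisely this construction, and it is the entire content of the claim: one must produce, for every admissible triple $(n,k,p)$ with $p\leq n-k$, a closed codimension-$k$ subcomplex with nonvanishing $p$-th Betti number arising as a genuine connected component of some $V_{\epsilon_0}$ inside $\stackrel{\circ}{\Delta}_n$, and nothing in your outline supplies it. The paper does not redo this construction either; it gets positivity from Remark~\ref{Rem_cp}, i.e.\ from the quantitative results of \cite{SW2} (Theorem~5.11 and Definition~5.12 there), which give $\tilde e_{p,\nu}(n,k)\geq\frac{\E_{\nu,d}(\tilde b_p)}{(n+1)!^d}\geq\sum_\Sigma b_p(\Sigma)c_\Sigma=c_p^-(n,k)$, the monotonicity of Theorem~\ref{Thm_btilde} playing exactly the propagating role you intend. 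So either you invoke those results of \cite{SW2}, or you must actually carry out the local construction you postpone. Moreover, your fallback for the endpoints is untenable: for $\nu\in\{0,1\}$ the cochain is almost surely constant, so $d\epsilon=0$ and $V_\epsilon=\emptyset$ almost surely, and no deterministic configuration can contribute; positivity via a positive-probability event only makes sense for $\nu\in(0,1)$, so this case cannot be "arranged by a direct local construction" as you suggest.
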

 The codimension one case $k=1$ plays a special role. Namely, for every $\epsilon\in C^0(K;\Z/2\Z)$,  
we alternatively define $V'_\epsilon\subset |K|$ such that  for every  simplex $\sigma\in K,$
$V'_\epsilon\cap \sigma$ is the convex hull of the middle points of the edges of $\sigma$ where $\epsilon$ is not constant, that is where the exterior derivative $d\epsilon$  does not vanish.
We proved in \cite{SW2} that the pairs $(K, V_\epsilon)$ and $(K, V'_\epsilon)$ are always  homeomorphic, in fact isotopic, see Proposition~2.2  of \cite{SW2}. When $K$ is the moment polytope of some toric manifold equipped with a convex triangulation, the hypersurfaces $V'_\epsilon$ coincide with the patchwork hypersurfaces introduced by  O.~Viro (see \cite{V2, V1} and Remark~2.4 of \cite{SW2}). These hypersurfaces  $V'_\epsilon$ inherit the structure 
of a $\CW$ complex, having a $p$-cell for every $(p+1)$-simplex $\sigma$ of $K$ on which $\epsilon$ is not constant, $p\in \{0,\ldots, n-1\},$ see Corollary~2.5 of \cite{SW2}. We denote by $C_*(V'_\epsilon; \Z/2\Z)$ the cellular chain complex of $V'_\epsilon.$ Moreover, the $p$-cell $V'_\epsilon\cap \sigma$ is isomorphic to a product of two simplices:  the simplex spanned by vertices of $\sigma$ where $\epsilon=0$ and the simplex spanned by vertices of $\sigma$ where $\epsilon=1$.

We now observe that each $\epsilon\in C^0(K;\Z/2\Z)$  induces a filtration $\emptyset\subset K_0^\epsilon\subset K_1^\epsilon\subset \ldots \subset K_{[\frac{n+1}{2}]}^\epsilon=K$, where for every $i\in \{0,\ldots,[\frac{n+1}{2}]\},$ 
$$K_i^\epsilon=\{\sigma\in K\;|\; \#\epsilon_{|_\sigma}^{-1}(0)\leq i \mbox{ or }  \#\epsilon_{|_\sigma}^{-1}(1)\leq i \}.$$ This $i$-th subcomplex $K_i^\epsilon$ of $K$ is thus the union of all simplices that $V'_{\epsilon}$ meets along the empty set if $i=0$ or along a product of two simplices, one of which being of dimension $\leq i-1$ if $i>0.$ Then:

\begin{thm}\label{Thm_canisom}
For every finite $n$-dimensional simplicial complex $K$, $n>0$, and every $\epsilon \in C^0(K;\Z/2\Z),$ the relative simplicial chain complex $C_*(K,K_0^{\epsilon};\Z/2\Z)$ and the shifted cellular chain complex $C_*(V'_\epsilon;\Z/2\Z)^{[1]}$ are canonically isomorphic.  
\end{thm}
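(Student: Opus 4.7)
The plan is to exhibit the canonical isomorphism at the level of basis elements, exploiting the simplex-by-simplex description of $V'_\epsilon$, and then to check that it commutes with the differentials.

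First I would match the graded pieces. In degree $p+1$, $C_{p+1}(K,K_0^\epsilon;\Z/2\Z)$ is the $\Z/2\Z$-vector space freely spanned by the $(p+1)$-simplices of $K$ that are not contained in $K_0^\epsilon$; by the very definition of $K_0^\epsilon$ these are precisely the $(p+1)$-simplices of $K$ on which $\epsilon$ is not constant. With the standard convention $C_n^{[1]}=C_{n-1}$, the shifted module $C_{p+1}(V'_\epsilon;\Z/2\Z)^{[1]}$ equals $C_p(V'_\epsilon;\Z/2\Z)$, which by Corollary~2.5 of \cite{SW2} is freely spanned by the cells $V'_\epsilon\cap \sigma$, with $\sigma$ ranging over the same set of $(p+1)$-simplices. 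The rule $\sigma \mapsto V'_\epsilon \cap \sigma$ therefore defines a canonical $\Z/2\Z$-linear isomorphism of graded modules $\phi : C_*(K,K_0^\epsilon;\Z/2\Z) \to C_*(V'_\epsilon;\Z/2\Z)^{[1]}$.

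Second I would verify that $\phi$ intertwines the two boundary operators. Fix a $(p+1)$-simplex $\sigma$ with $a\geq 1$ vertices sent to $0$ and $b\geq 1$ vertices sent to $1$ by $\epsilon$, so that $a+b=p+2$ and $V'_\epsilon\cap \sigma \cong \Delta_{a-1}\times \Delta_{b-1}$. The codimension-$1$ faces of this product polytope are exactly of the form $\Delta_{a-2}^{(i)}\times \Delta_{b-1}$ for a $0$-vertex $v_i$ of $\sigma$ (when $a\geq 2$) or $\Delta_{a-1}\times \Delta_{b-2}^{(j)}$ for a $1$-vertex $v_j$ (when $b\geq 2$). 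In both cases such a face coincides with the cell $V'_\epsilon\cap \tau$, where $\tau=\sigma\setminus\{v_i\}$ or $\sigma\setminus\{v_j\}$ is a codimension-$1$ face of $\sigma$ on which $\epsilon$ remains non-constant. Conversely any codimension-$1$ face of $\sigma$ not in $K_0^\epsilon$ arises this way, while a codimension-$1$ face of $\sigma$ inside $K_0^\epsilon$ can appear only when $a=1$ or $b=1$, produced by deleting the unique $0$- or $1$-vertex of $\sigma$, and contributes nothing to the polytope boundary (consistent with $\Bd \Delta_0 = \emptyset$).

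Third, over $\Z/2\Z$ the cellular boundary of the $p$-cell $V'_\epsilon\cap \sigma$ is the sum, each counted once, of the $(p-1)$-cells of $V'_\epsilon$ contained in its topological boundary. By the previous paragraph,
$$\partial^{\textup{cell}}(V'_\epsilon\cap \sigma)=\sum_{\tau} V'_\epsilon\cap \tau,$$
the sum being over the codimension-$1$ faces $\tau$ of $\sigma$ on which $\epsilon$ is not constant. On the other hand the relative simplicial boundary of $\sigma$ in $C_*(K,K_0^\epsilon;\Z/2\Z)$ is the sum of its codimension-$1$ faces not lying in $K_0^\epsilon$, which is exactly the same set of $\tau$. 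Hence $\phi\circ \partial = \partial^{\textup{cell}}\circ \phi$, completing the construction of the canonical isomorphism.

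The only nontrivial input is the identification of the cellular boundary of each cell $V'_\epsilon\cap \sigma$ inside $V'_\epsilon$: one must check that the attaching map is a homeomorphism onto the topological boundary of this convex polytope and that this boundary is covered exactly by the lower cells indexed by the codimension-$1$ faces of $\sigma$ on which $\epsilon$ is still non-constant. Working modulo $2$ removes any sign or orientation bookkeeping, so the verification reduces to the combinatorial matching of codimension-$1$ faces done above; the underlying geometric input is precisely Corollary~2.5 of \cite{SW2}.
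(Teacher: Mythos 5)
Your proposal is correct and follows essentially the same route as the paper: both identify the canonical bases (simplices of $K$ on which $\epsilon$ is non-constant versus the cells $V'_\epsilon\cap\sigma$) and then observe that the cellular boundary corresponds to the relative simplicial boundary, i.e.\ to $pr_\epsilon\circ\partial$. Your explicit check of the codimension-one faces of the product cell $\Delta_{a-1}\times\Delta_{b-1}$ merely spells out the boundary identification that the paper asserts in one line, and it is accurate, including the degenerate case $a=1$ or $b=1$.
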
  

We set, for every $p\in\{0,\ldots, n-1\}$ and $d\geq 0$, $\E_{\nu,d}(b_p(K_0^\epsilon))=\int_{C^0(\Sd^d(K);\Z/2\Z)}b_p(K_0^\epsilon)d\mu_\nu(\epsilon)$ and will write $\E_\nu$ instead of $\E_{\nu,0}$ for simplicity. We deduce:

\begin{cor}\label{Cor_canisom} For every finite $n$-dimensional simplicial complex $K$, $n>0$, and every $p\in\{0,\ldots, n-1\}$ and $\nu\in[0,1],$
$\E_{\nu} (b_p(K_0^\epsilon))-b_p(K)\leq \E_{\nu} (b_p(V_\epsilon))\leq \E_{\nu}(b_p(K_0^\epsilon))+b_{p+1}(K).$

In particular,  $$\lim_{d\to +\infty}\frac{\E_{\nu,d}(b_p(K_0^\epsilon))}{f_n(K)(n+1)!^d}=e_{p,\nu}(n,1).$$
\end{cor}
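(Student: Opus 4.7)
The plan is to translate Theorem~\ref{Thm_canisom} into an identification of homology groups and feed it into the long exact sequence of the pair $(K, K_0^\epsilon)$. Taking homology in the canonical isomorphism $C_*(K, K_0^\epsilon;\Z/2\Z) \cong C_*(V'_\epsilon;\Z/2\Z)^{[1]}$ and using the homeomorphism $V_\epsilon \cong V'_\epsilon$ from Proposition~2.2 of \cite{SW2} yields $H_p(K, K_0^\epsilon;\Z/2\Z) \cong H_{p-1}(V_\epsilon;\Z/2\Z)$ for every $p\geq 1$. The direction of the shift is pinned down by a rank count: a $p$-simplex $\sigma\in K$ on which $\epsilon$ is non-constant contributes exactly one generator of $C_p(K, K_0^\epsilon)$ and one $(p-1)$-cell of $V'_\epsilon$.

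Inserting this identification into the long exact sequence of the pair gives, for every $p \in \{0,\ldots,n-1\}$, a four-term exact sequence
$$H_{p+1}(K;\Z/2\Z) \longrightarrow H_p(V_\epsilon;\Z/2\Z) \longrightarrow H_p(K_0^\epsilon;\Z/2\Z) \longrightarrow H_p(K;\Z/2\Z).$$
By exactness, the kernel of the middle map has dimension at most $b_{p+1}(K)$ and its image has codimension (in the target) at most $b_p(K)$. A routine dimension count then yields the pointwise bounds
$$b_p(K_0^\epsilon) - b_p(K) \;\leq\; b_p(V_\epsilon) \;\leq\; b_p(K_0^\epsilon) + b_{p+1}(K),$$
and integrating against $\mu_\nu$ produces the first assertion of the corollary.

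For the limiting statement, I would apply the pointwise inequality to $\Sd^d(K)$ in place of $K$. Since barycentric subdivision preserves the homotopy type, $b_p(\Sd^d(K)) = b_p(K)$ and $b_{p+1}(\Sd^d(K)) = b_{p+1}(K)$ are constant in $d$, while $f_n(\Sd^d(K)) = f_n(K)(n+1)!^d$. After taking $\mu_\nu$-expectations and dividing by $f_n(K)(n+1)!^d$, both correction terms tend to $0$ as $d\to +\infty$. The sandwich lemma together with Theorem~\ref{Thm_Ed} applied to the middle quantity then gives $\lim_{d\to +\infty} \E_{\nu,d}(b_p(K_0^\epsilon))/(f_n(K)(n+1)!^d) = e_{p,\nu}(n,1)$.

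There is no real obstacle at this stage: once Theorem~\ref{Thm_canisom} is available, the corollary reduces to a standard long-exact-sequence argument followed by a sandwich. The only point deserving care is the bookkeeping around the $[1]$-shift convention, which is why I would pause to match ranks at the chain level before reading off the homological degree in the long exact sequence.
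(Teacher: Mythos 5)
Your proposal is correct and follows essentially the same route as the paper: both feed the isomorphism of Theorem~\ref{Thm_canisom} (together with $V_\epsilon\cong V'_\epsilon$) into the long exact sequence of the pair $(K,K_0^\epsilon)$, obtain the pointwise bounds $b_p(K_0^\epsilon)-b_p(K)\leq b_p(V_\epsilon)\leq b_p(K_0^\epsilon)+b_{p+1}(K)$ by counting dimensions of kernels and images, and then integrate against $\mu_\nu$. The limiting statement is likewise derived in the paper exactly as you do, from Theorem~\ref{Thm_Ed}, the invariance of $b_p(K)$ and $b_{p+1}(K)$ under barycentric subdivision, and division by $f_n(K)(n+1)!^d$.
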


This result makes it possible to improve the upper estimates of $\E_{\nu,d}(b_p)$ or  of $e_{p,\nu}(n,k)$ given in \cite{SW2} in the case $k=1$ and to relate them with a packing problem in $K$ or $\Sd^d(K), d>0.$ To this end, for every finite simplicial complex $K$, every $p\in \N$ and $\nu\in[0,1]$, we set  $$M_{p,\nu}(K)=
{(\nu^{p+1}+(1-\nu)^{p+1})}{b_p(K)} +\nu(1-\nu)(\nu^{p}+(1-\nu)^{p})\sum_{i=p+1}^{\dim K}(-1)^{i+1-p}(f_i(K)-b_i(K)),$$
where $f_i(K)$ denotes the number of $i$-dimensional simplices of $K$ and the sum vanishes if $p\geq \dim K$. The positivity of $M_{p,\nu}(K)$ is closely related to the Morse inequalities associated to the simplicial chain complex  of $K$. Our key result is then the following monotony theorem, (see Theorem~\ref{Thm_Mp}).

\begin{thm}\label{Thm_MpL} Let $p\in\N$ and $\nu\in[0,1].$
 For every finite simplicial complex $K$ and every subcomplex $L$ of $K$, 
 $0\leq M_{p,\nu}(L)-\E_{\nu}(b_p(L_0^\epsilon))\leq M_{p,\nu}(K)-\E_{\nu}(b_p(K_0^\epsilon)).$ 
 
 \end{thm}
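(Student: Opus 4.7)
The strategy is to deduce both inequalities from a single monotonicity statement proved by induction. Since $M_{p,\nu}(\emptyset) = 0 = \E_\nu(b_p(\emptyset_0^\epsilon))$, the positivity inequality will follow from the monotonicity applied to the pair (any complex, $\emptyset$). For monotonicity one builds up from $L$ to $K$ by adjoining the simplices of $K\setminus L$ one at a time, in order of non-decreasing dimension, so that each intermediate complex is a genuine subcomplex of $K$. It therefore suffices to prove: if $L'\subseteq K$ is a subcomplex and $\sigma\in K\setminus L'$ satisfies $\partial\sigma\subseteq L'$, then
\begin{equation*}
M_{p,\nu}(L'\cup\{\sigma\}) - \E_\nu\bigl(b_p((L'\cup\{\sigma\})_0^\epsilon)\bigr) \;\geq\; M_{p,\nu}(L') - \E_\nu\bigl(b_p(L'_0^\epsilon)\bigr).
\end{equation*}

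The first preparatory step is to simplify $M_{p,\nu}$. Writing $r_i(K)$ for the rank of the boundary $\partial_i^K$ in the simplicial chain complex of $K$ over $\Z/2\Z$, the elementary identity $f_i-b_i = r_i+r_{i+1}$ lets the alternating sum in the definition telescope, yielding
\begin{equation*}
M_{p,\nu}(K) \;=\; \bigl(\nu^{p+1}+(1-\nu)^{p+1}\bigr)\,b_p(K) \,+\, \nu(1-\nu)\bigl(\nu^p+(1-\nu)^p\bigr)\,r_{p+1}(K).
\end{equation*}
Setting $q=\dim\sigma$, the passage from $L'$ to $L'\cup\{\sigma\}$ splits into two deterministic cases depending on whether $\partial\sigma$ already bounds in $L'$: in case (A) it does, so $b_q$ grows by $1$ and $r_q$ is unchanged, whereas in case (B) it does not, so $b_{q-1}$ drops by $1$ and $r_q$ grows by $1$. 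Substituting these changes into the simplified formula for $M_{p,\nu}$ shows that $\Delta M_{p,\nu}$ vanishes except in case $(q=p,\mathrm{A})$, where $\Delta M_{p,\nu} = \nu^{p+1}+(1-\nu)^{p+1}$, and in case $(q=p+1,\mathrm{B})$, where the two nonzero contributions combine algebraically to $\Delta M_{p,\nu} = -(\nu^{p+2}+(1-\nu)^{p+2})$.

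One then analyses $\E_\nu(X_\sigma)$ with $X_\sigma := b_p((L'\cup\{\sigma\})_0^\epsilon) - b_p(L'_0^\epsilon)$. This random variable vanishes unless $\epsilon$ is constant on $\sigma$, an event of probability $\nu^{q+1}+(1-\nu)^{q+1}$; conditionally on that event, $X_\sigma$ equals $+1$ when $q=p$ and $\partial\sigma$ bounds in $L'_0^\epsilon$, and $-1$ when $q=p+1$ and $\partial\sigma$ does \emph{not} bound in $L'_0^\epsilon$. The crucial observation is that $L'_0^\epsilon\subseteq L'$, so bounding in $L'_0^\epsilon$ implies bounding in $L'$; equivalently, if $\partial\sigma$ fails to bound in $L'$ then it fails to bound in every $L'_0^\epsilon$ containing it. Checking the four relevant case combinations yields $\E_\nu(X_\sigma) \leq \Delta M_{p,\nu}$ throughout, with equality in $(q=p+1,\mathrm{B})$ because there $X_\sigma$ is forced to equal $-1$ whenever $\epsilon$ is constant on $\sigma$.

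The principal obstacle is the case analysis, and especially the sign calculation in case $(q=p+1,\mathrm{B})$: the two separate contributions, from $\Delta b_p$ and from $\Delta r_{p+1}$, must combine into the clean expression $-(\nu^{p+2}+(1-\nu)^{p+2})$, which then matches $\E_\nu(X_\sigma)$ exactly. Once the telescoping reformulation of $M_{p,\nu}$ and the one-sided implication ``bounds in $L'_0^\epsilon$ $\Rightarrow$ bounds in $L'$'' are in hand, the induction closes without further difficulty.
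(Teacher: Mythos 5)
Your proof is correct, but it takes a genuinely different route from the paper's. The paper argues globally and in one shot: using Lemma~\ref{Lem_Sp} it chooses a complement $S_p(L)$ of $Z_p(L)$ in $C_p(L)$ spanned by simplices, extends it to a simplex-spanned complement $S_p(K)=S_p(L)\oplus S_p(K,L)$ of $Z_p(K)$ (and similarly in degree $p+1$), and then, for each fixed $\epsilon$, compares $\dim\bigl(S_p(K,L)\cap C_p(K_0^\epsilon)\bigr)$ and $\dim\bigl(S_{p+1}(K,L)\cap C_{p+1}(K_0^\epsilon)\bigr)$ with the growth of the boundary spaces $B_{p-1}$ and $B_p$ from $L_0^\epsilon$ to $K_0^\epsilon$, before integrating; the left inequality is obtained separately as Theorem~\ref{Thm_UpB} by the same complement technique. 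You instead rewrite $M_{p,\nu}$ intrinsically as $(\nu^{p+1}+(1-\nu)^{p+1})b_p+\nu(1-\nu)(\nu^p+(1-\nu)^p)\,\mathrm{rank}\,\partial_{p+1}$ (your telescoping identity $f_i-b_i=r_i+r_{i+1}$ is correct, using $r_{\dim+1}=0$, and the coefficient identity $\nu(1-\nu)(\nu^p+(1-\nu)^p)=(\nu^{p+1}+(1-\nu)^{p+1})-(\nu^{p+2}+(1-\nu)^{p+2})$ makes the case $(q=p+1,\mathrm{B})$ come out as claimed), and you run an induction adding one simplex of $K\setminus L$ at a time in non-decreasing dimension, comparing the deterministic increment $\Delta M_{p,\nu}$ with $\E_\nu(X_\sigma)$ via the one-sided implication that bounding in $L'^{\,\epsilon}_0$ implies bounding in $L'$ (which is the same monotonicity of boundary spaces the paper uses, localized to a single simplex); the event that $\sigma\in(L'\cup\{\sigma\})_0^\epsilon$ has probability $\nu^{q+1}+(1-\nu)^{q+1}$ because $\mu_\nu$ is a product measure, and your case analysis (including the degenerate $q=p=0$ case, where $\partial\sigma=0$ bounds by convention) is complete. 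What your approach buys is economy and transparency: it yields both inequalities of Theorem~\ref{Thm_MpL} at once (the left one being the step from $\emptyset$ to $L$), recovers Theorem~\ref{Thm_UpB} as a special case, avoids the somewhat delicate choices of simplex-spanned complements and the transversality claims of the paper, and exhibits exactly which local configurations make the defect $M_{p,\nu}-\E_\nu(b_p(\cdot_0^\epsilon))$ strictly increase. What the paper's argument buys is that the same complement construction also proves, with no extra work, the second inequality of Theorem~\ref{Thm_Mp} involving $\E_\nu(f_p)$, and it keeps the comparison at the level of a fixed $\epsilon$ for the whole pair $(K,L)$ rather than requiring an ordering of the simplices.
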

 
When $L=\emptyset$, Theorem~\ref{Thm_MpL}  combined with Corollary~\ref{Cor_canisom} already improves the upper estimates given in Corollary~4.2 of \cite{SW2} for $k=1$. But these get improved further whenever the left hand side in Theorem~\ref{Thm_MpL} is positive. This turns out to be the case when $L$ is a packing of disjoint simplices in $K$, or more generally of simplices which intersect along faces  of dimensions less than $p-1,$ since $\E_{\nu}(b_p(L_0^\epsilon))$ vanishes in this case as soon as $p>0$, see Proposition~\ref{Prop_L}. Theorem~\ref{Thm_MpL} combined with  Proposition~\ref{Prop_L} thus raises  the following  packing problem which we tackle in the second part of the paper, independent from the first one: How many disjoint simplices can be packed in the finite simplicial complex $K$? What about the asymptotic of such a maximal packing in $\Sd^d(K)$, $d\gg0$?
Indeed, let us denote by $\mathcal{L}_d^{n,p}$ the finite set of packings of simplices in $\Sd^d(\Delta_n)$ which intersect each other and the boundary of $\Sd^d(\Delta_n)$ along faces of dimensions less than $p-1,$ where $n,d>0$ and $p\in\{0,\ldots, n-1\}.$ We set $\lambda_{p,\nu}^d(n)=\frac{1}{(n+1)!^d}\max_{L\in \mathcal{L}_d^{n,p}}M_{p,\nu}(L).$
This sequence $(\lambda_{p,\nu}^d(n))_{d\geq 0}$ is increasing and bounded from above, see Proposition~\ref{Prop_lambda},
and we denote by $\lambda_{p,\nu}(n)$ its limit as $d$ grows to $+\infty$. We deduce the following asymptotic result.

\begin{thm}\label{Thm_limEd}
For every $n>0$, $\nu\in[0,1]$ and $p\in\{1,\ldots, n-1\},$

$$e_{p,\nu}(n,1)\leq \nu(1-\nu)(\nu^p+(1-\nu)^p)\sum_{l=p+1}^n(-1)^{l+1-p}q_{l,n}-\lambda_{p,\nu}(n).$$

\end{thm}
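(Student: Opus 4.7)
The plan is to apply Theorem~\ref{Thm_MpL} to $K=\Sd^d(\Delta_n)$ with $L$ running over $\mathcal{L}_d^{n,p}$, combine the resulting bound with Proposition~\ref{Prop_L} and Corollary~\ref{Cor_canisom}, and then let $d\to+\infty$.

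First I would exploit the vanishing of $\E_\nu(b_p(L_0^\epsilon))$ on packings. Since $p\geq 1$ and every $L\in\mathcal{L}_d^{n,p}$ consists of simplices meeting along faces of dimension strictly less than $p-1$, Proposition~\ref{Prop_L} gives $\E_\nu(b_p(L_0^\epsilon))=0$. Plugging this into the right-hand inequality of Theorem~\ref{Thm_MpL} with $K=\Sd^d(\Delta_n)$ yields
$$\E_{\nu,d}(b_p(K_0^\epsilon))\leq M_{p,\nu}(\Sd^d(\Delta_n))-M_{p,\nu}(L).$$
Taking the maximum over $L\in\mathcal{L}_d^{n,p}$ and dividing by $(n+1)!^d f_n(\Delta_n)=(n+1)!^d$, I obtain
$$\frac{\E_{\nu,d}(b_p(K_0^\epsilon))}{(n+1)!^d}\leq \frac{M_{p,\nu}(\Sd^d(\Delta_n))}{(n+1)!^d}-\lambda_{p,\nu}^d(n).$$

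Next I would pass to the limit $d\to+\infty$. The left-hand side converges to $e_{p,\nu}(n,1)$ by Corollary~\ref{Cor_canisom}. For the first term on the right, I use that $\Sd^d(\Delta_n)$ is homeomorphic to the $n$-simplex for every $d$, so its Betti numbers are bounded uniformly in $d$; hence every contribution of the form $b_i(\Sd^d(\Delta_n))/(n+1)!^d$ in the definition of $M_{p,\nu}$ tends to $0$, while $f_i(\Sd^d(\Delta_n))/(n+1)!^d\to q_{i,n}$. Only the face-number piece of $M_{p,\nu}$ survives, giving
$$\frac{M_{p,\nu}(\Sd^d(\Delta_n))}{(n+1)!^d}\longrightarrow \nu(1-\nu)(\nu^p+(1-\nu)^p)\sum_{l=p+1}^n(-1)^{l+1-p}q_{l,n}.$$
Finally, $\lambda_{p,\nu}^d(n)\to\lambda_{p,\nu}(n)$ by definition, the existence of this limit being guaranteed by Proposition~\ref{Prop_lambda}. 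Combining these three convergences produces the announced inequality.

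The main difficulty is organizational rather than analytical: one must check that the direction of the monotony inequality in Theorem~\ref{Thm_MpL} is such that the packing $L$ contributes a genuinely subtractive correction $-M_{p,\nu}(L)/(n+1)!^d$, and that the Betti-number terms of $\Sd^d(\Delta_n)$ are absorbed in the normalization $(n+1)!^{-d}$. All the substantial content (the monotony bound, the vanishing on packings, the convergence of averaged Betti numbers, and the existence of $\lambda_{p,\nu}(n)$) has already been isolated in the earlier statements, so the proof amounts to assembling those ingredients and taking the single limit $d\to+\infty$.
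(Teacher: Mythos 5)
Your proposal is correct and follows essentially the same route as the paper: monotony (Theorem~\ref{Thm_MpL}) applied with a maximizing packing $L\in\mathcal{L}_d^{n,p}$, the vanishing of $\E_\nu(b_p(L_0^\epsilon))$ from Proposition~\ref{Prop_L}, the limit statement of Corollary~\ref{Cor_canisom}, and the convergence $\lambda^d_{p,\nu}(n)\to\lambda_{p,\nu}(n)$ from Proposition~\ref{Prop_lambda}. The only difference is cosmetic: you specialize to $K=\Delta_n$ (legitimate, since $e_{p,\nu}(n,1)$ is universal), whereas the paper argues for an arbitrary complex $K$ by assembling copies $(f_\sigma)_*L_d$ of the optimal packing over all $n$-simplices $\sigma$ of $K$.
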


In Theorem~\ref{Thm_limEd}, $q_{l,n}$ denotes the asymptotic face number $\lim_{d\to +\infty}\frac{f_l(\Sd^d(\Delta_n))}{(n+1)!^d}$, see \cite{BW, DPS,SW1}. We do not know the actual value of $\lambda_{p,\nu}(n)$, but the results of the second part of this paper make it possible to estimate $\lambda_{p,\nu}(n)$ from below, see Theorem~\ref{Thm_mnp}.
Is it possible to likewise improve the upper estimates of \cite{GW161,GW162}?  What would then play the role of these packings?

\subsection{Tilings and Packings}
For every positive dimension $n$ and every $s\in\{0,\ldots,n+1\}$, we define the tile $T^n_s$ to be the complement of $s$ facets in the standard $n$-simplex $\Delta_n$. In particular, $T_0^n=\Delta_n$ and $T^n_{n+1}=\stackrel{\circ}{\Delta}_n$,  the interior of $\Delta_n$. An $n$-dimensional simplicial complex $K$ is called tileable  when $|K|$ can be  covered by disjoint $n$-dimensional tiles. For instance,  the boundary $\partial \Delta_{n+1}$ of the  standard  $(n+1)$-simplex is tileable and it has a  tiling which uses each tile $T_s^n$ exactly once, $s\in\{0,\ldots, n+1\},$ see Corollary~\ref{Cor_dDelta}. The $h$-vector $h(\tT)=(h_0(\tT),\ldots, h_{n+1}(\tT))$ of a finite tiling $\tT$ encodes the number of times $h_s(\tT)$ each tile $T_s^n$ is used in the tiling, $s\in\{0,1,\ldots, n+1\}.$
 We observe the following (see Theorem~\ref{Thm_fvsh}):
 
 \begin{thm} \label{Thm_hvTile}
 Let $K$ be a tileable  finite $n$-dimensional simplicial complex. Then, two tilings $\tT$ and $\tT'$ of $K$ have the same $h$-vector provided $h_0(\tT)=h_0(\tT').$ When $h_0(\tT)=1$, it coincides with the $h$-vector of $K$.
 \end{thm}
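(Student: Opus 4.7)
The plan is to express the $f$-vector of $K$ as a linear function of the $h$-vector of an arbitrary tiling and then invoke triangularity to recover the tiling's $h$-vector from the pair $(f(K),h_0(\tT))$. The key local observation is that for the model tile $T^n_s$, obtained from $\Delta_n$ by removing $s$ facets, an open $i$-face of $\Delta_n$ (specified by a set of $i+1$ vertices) survives in $T^n_s$ precisely when it is not contained in any removed facet, equivalently when it contains all $s$ vertices opposite to those facets. Hence $T^n_s$ contains exactly $\binom{n+1-s}{i+1-s}$ open $i$-faces.

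Since the tiling $\tT$ puts every open simplex of $K$ in exactly one tile, summing this local count over all tiles yields
$$f_i(K)=\sum_{s=0}^{n+1}h_s(\tT)\binom{n+1-s}{i+1-s},\qquad i\in\{0,\ldots,n\}.$$
These $n+1$ equations are triangular in the variables $h_0(\tT),\ldots,h_{n+1}(\tT)$: the coefficient of $h_{i+1}(\tT)$ in the $i$-th equation equals $\binom{n-i}{0}=1$, while $h_s(\tT)$ with $s>i+1$ does not appear. Once $h_0(\tT)$ is fixed, the remaining entries are therefore determined recursively by
$$h_{i+1}(\tT)=f_i(K)-\sum_{s=0}^{i}h_s(\tT)\binom{n+1-s}{i+1-s}.$$
Since the right-hand sides depend only on $K$ and on $h_0(\tT)$, any two tilings $\tT,\tT'$ of $K$ with $h_0(\tT)=h_0(\tT')$ must coincide in every entry, which proves the first assertion.

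For the second assertion, recall that the classical inversion relating the $f$-vector and the $h$-vector of an $n$-dimensional simplicial complex reads $f_i(K)=\sum_{s=0}^{i+1}\binom{n+1-s}{i+1-s}h_s(K)$ with the convention $h_0(K)=1$. This is exactly the identity displayed above in the case $h_0(\tT)=1$, so the same triangular recursion forces $h_s(\tT)=h_s(K)$ for every $s$. The only real obstacle in carrying out the proof is to justify rigorously the face-count $\binom{n+1-s}{i+1-s}$ for $T^n_s$, which comes down to identifying the tile with the disjoint union of those open faces of $\Delta_n$ whose vertex set contains the $s$ distinguished apices, and then applying the standard binomial enumeration.
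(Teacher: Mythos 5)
Your proof is correct and follows essentially the same route as the paper: you establish the face counts $f_i(T^n_s)=\binom{n+1-s}{i+1-s}$ (the paper's Proposition~4.3, stated there as $\binom{n+1-s}{n-i}$) and sum over the tiles to express $f(K)$ linearly in $h(\tT)$, exactly as in the paper's Theorem~4.8 and Corollary~4.9. The only cosmetic difference is that the paper packages the resulting linear relation as the polynomial identity $\sum_s h_s(\tT)X^{n+1-s}=\sum_j f_{j-1}(K)(X-1)^{n+1-j}$ with $f_{-1}=h_0(\tT)$, whereas you solve the same triangular system recursively; both yield that $h(\tT)$ is determined by $f(K)$ and $h_0(\tT)$, and agree with the classical $h$-vector when $h_0=1$.
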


 Theorem~\ref{Thm_hvTile} thus provides a geometric interpretation of the $h$-vector for tileable finite $n$-dimensional  simplicial complexes which have a tiling $\tT$  such that $h_0(\tT)=1.$  When $K$ is connected, we call  such a tiling regular. Note that such an interpretation  was known for shellable simplicial complexes, a closely related notion, see \S~\ref{Sect_Tiling}.  Tileable simplicial    complexes  have the following key property (see Proposition~\ref{Prop_Sqtte} and Corollary~\ref{Cor_SdK}).
 
 \begin{thm}\label{Thm_SdSq} Let $K$ be a tileable  $n$-dimensional simplicial complex. Then, all its skeletons   and  barycentric subdivisions are tileable. Moreover, any tiling of $K$ induces a tiling on its skeletons  and barycentric subdivisions.
 \end{thm}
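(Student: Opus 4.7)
The plan is to reduce the statement to the two atomic operations $K \leadsto K^{(n-1)}$ and $K \leadsto \Sd(K)$; the general claim then follows by induction on the number of times either operation is applied. In each case the goal is to read off, from the given tiling $\tT$ of $K$, an explicit combinatorial recipe for a tiling of the target complex.

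I would encode $\tT$ by associating, to each $n$-simplex $\sigma$ of $K$, the subset $V(\sigma)$ of its vertex set consisting of those vertices whose opposite facet is removed from the tile on $\sigma$. The tiling axiom is then equivalent to: for every face $\rho$ of $K$ there is a unique $n$-simplex $\sigma^\ast(\rho) \supseteq \rho$ with $V(\sigma^\ast(\rho)) \subseteq \rho$. Fix a linear order $<$ on the vertices of $K$. For each $n$-simplex $\sigma$, list the kept vertices (those outside $V(\sigma)$) as $w_1 < \cdots < w_{n+1-|V(\sigma)|}$ and, on each kept facet $F_{w_k}(\sigma)$ of $\sigma$ (the facet opposite $w_k$), declare the tile $T^{n-1}_{|V(\sigma)|+k-1}$ specified by
\[
V\bigl(F_{w_k}(\sigma)\bigr) := V(\sigma) \cup \{w_1, \dots, w_{k-1}\}.
\]
Well-definedness is immediate from the tiling axiom at $\rho = \tau$ for an $(n-1)$-simplex $\tau$: each $(n-1)$-simplex of $K$ is a kept facet of exactly one $n$-simplex. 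The tiling axiom on $K^{(n-1)}$ I would verify by hand: given a face $\rho$ of $K^{(n-1)}$, let $\sigma^\ast := \sigma^\ast(\rho)$ and let $v^\ast$ be the $<$-smallest kept vertex of $\sigma^\ast$ outside $\rho$ (which exists because $\rho \subsetneq \sigma^\ast$). Then $F_{v^\ast}(\sigma^\ast)$ is the unique $(n-1)$-simplex $\tau \supseteq \rho$ satisfying $V(\tau) \subseteq \rho$: uniqueness of $\sigma^\ast$ forces the containing $n$-simplex, and the condition $\{w_1, \dots, w_{k-1}\} \subseteq \rho$ pins down $w_k = v^\ast$.

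For $\Sd(K)$ I would apply the same philosophy one level up. The $(n+1)!$ maximal simplices of $\Sd(\sigma)$ are indexed by linear orderings of the vertices of $\sigma$; using the ordering ``$V(\sigma)$ in $<$-order, then kept vertices in $<$-order'', shell $\Sd(\sigma)$ so that its $k$-th maximal simplex meets the previous ones along a $T^n_{t_k}$-boundary, producing an explicit tile on each top-dimensional simplex of $\Sd(\sigma)$. Along a shared $(n-1)$-facet $\tau$ of two $n$-simplices $\sigma, \sigma'$ of $K$, the tiling axiom at $\rho = \tau$ tells us that exactly one of $\sigma, \sigma'$ keeps $\tau$; on the other side the tiles of $\Sd(\sigma')$ that meet $\tau$ are all open toward $\tau$ and contribute nothing to $\Sd(\tau)$, so the two local tilings glue into a tiling of $\Sd(K)$.

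The hard step will be this last one: writing down the shelling of $\Sd(\Delta_n)$ corresponding to a prescribed $V(\Delta_n)$ in closed form, and proving that it restricts on each kept facet $F$ to precisely the tiling of $\Sd(F)$ obtained by applying the same construction to $F$ with boundary data $V(\sigma) \cup \{w_1, \dots, w_{k-1}\}$. This is a ``parametrised shellability'' statement about $\Sd(\Delta_n)$ and is most naturally proved by induction on $n$, mirroring the recursive formula that drove the skeleton argument.
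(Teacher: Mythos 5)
Your treatment of the skeletons is correct, and it is essentially the paper's argument made explicit: the paper invokes Proposition~\ref{Prop_Pieces}, which says that the boundary part $T_s^{n+1}\cap\partial\Delta_{n+1}$ of a tile decomposes as $T_s^n\sqcup T_{s+1}^n\sqcup\ldots\sqcup T_{n+1}^n$, one lower-dimensional tile per kept facet, and your rule $V\bigl(F_{w_k}(\sigma)\bigr)=V(\sigma)\cup\{w_1,\ldots,w_{k-1}\}$ is a concrete such assignment; your reformulation of the tiling condition (each face $\rho$ lies in the tile of a unique $\sigma\supseteq\rho$ with $V(\sigma)\subseteq\rho$) and the verification of it on $K^{(n-1)}$ via the $<$-smallest kept vertex outside $\rho$ are sound, and iterating gives all skeletons.

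For the barycentric subdivision, however, there is a genuine gap: the entire content of that half of the theorem is precisely the statement you defer as ``the hard step''. What is needed is that for each $s$ the subdivided tile $\Sd(T_s^n)$ admits a tiling, i.e.\ a tiling by $n$-dimensional tiles of exactly those open simplices of $\Sd(\Delta_n)$ whose carrier contains $V(\sigma)$ (tiles ``open toward'' the removed facets). Note that a shelling of $\Sd(\Delta_n)$ by itself only tiles the \emph{closed} simplex (its first tile is $T_0^n$), so for $s\geq 1$ one must genuinely open up tiles toward the removed facets; asserting that a suitable permutation order does this, and proving it by an unspecified induction on $n$, is exactly the paper's key Theorem~\ref{Thm_SdPieces}, which it establishes by induction on the dimension: tile $\Sd(\partial\Delta_{n+1})=\bigsqcup_{s=0}^{n+1}\Sd(T_s^n)$ by the induction hypothesis, cone each of these tiles from the barycenter of $\Delta_{n+1}$, and use Proposition~\ref{Prop_Pieces} (a cone over a tile, deprived of its center and possibly its base, is again a tile); removing the bases of the cones over $\bigsqcup_{j<s}\Sd(T_j^n)$ then tiles $\Sd(T_s^{n+1})$. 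Once this is known, the gluing you worry about is automatic and needs no facet-restriction compatibility at all: the sets $\Sd(T)$, $T\in\tT$, already partition $|\Sd(K)|$ (every open simplex of $\Sd(K)$ lies in the open face carrying its last vertex, hence in a unique tile of $\tT$), so the union of the tilings of the pieces is a tiling of $\Sd(K)$ --- this is Corollary~\ref{Cor_SdK}. In short, your plan is reasonable and your requirement is even stronger than necessary, but as written the $\Sd$ part of the theorem is announced rather than proved.
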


 We denote by $\Sd(\tT)$ the tiling of $\Sd(K)$ induced by a tiling $\tT$ of $K$.
 It was proved by A.~Bj\"orner \cite{B} that the barycentric subdivision of a shellable simplicial complex is shellable. Our proof seems more geometric. We actually prove that the first barycentric subdivision $\Sd(T_s^n)$ of each tile $T_s^n$ is tileable, $s\in\{0,\ldots, n+1\}$, see Theorem~\ref{Thm_SdPieces}. 
 We then study the matrix $H_n$ of size $(n+2)\times (n+2)$ whose  rows  are  the  $h$-vectors of the tilings of  $\Sd(T_s^n),$ $s\in\{0,\ldots, n+1\}.$ Let $\rho_n$ be the involution $(h_0,\ldots, h_{n+1})\in \R^{n+2}\mapsto (h_{n+1},\ldots, h_1, h_0)\in\R^{n+2}.$
 We prove the following:
 
 \begin{thm}
 For every $n>0$, $H_n$ is diagonalizable with eigenvalues $s!$, $s\in \{0,\ldots, n+1\}$. Moreover, it commutes with $\rho_n$ and the restriction of $\rho_n$ to the eigenspace of $s!$ is $(-1)^{n+1-s}\id.$  
 \end{thm}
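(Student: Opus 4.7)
The plan is to recognize $H_n$ as the matrix of a barycentric-subdivision operator on tile $h$-polynomials, and to reduce the eigenvalue statement to the classical triangular structure of subdivision on $f$-polynomials.

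\textbf{Step 1: Polynomial reformulation.} A direct count gives that a tile $T_s^n$ has $\binom{n+1-s}{k+1-s}$ faces of dimension $k$, and the standard transform $h(K;t)=(1-t)^{n+1}f(K;t/(1-t))$ then yields $h(T_s^n;t)=t^s$. By Theorem~\ref{Thm_hvTile}, the $h$-polynomial of a tiling is additive in the tile contributions, so the row of $H_n$ indexed by $s$ is the $h$-polynomial $P_s(t)$ of the canonical tiling of $\Sd(T_s^n)$. Thus $H_n$ is, up to transpose, the matrix in the basis $\{1,t,\dots,t^{n+1}\}$ of the linear operator $F_n\colon t^s\mapsto P_s(t)$ on polynomials of degree $\leq n+1$.

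\textbf{Step 2: Commutation with $\rho_n$.} Introduce the involution $\iota$ on tiles that exchanges $T_s^n$ and $T_{n+1-s}^n$ by swapping the sets of removed and retained facets; on $h$-polynomials it acts as $t^s\mapsto t^{n+1-s}$, which is exactly $\rho_n$. The geometric content is that $\iota$ commutes with the canonical tiling of the barycentric subdivision: for each top-dimensional sub-simplex $\tau$, the set of facets of $\tau$ declared missing in the canonical tiling of $\Sd(T_{n+1-s}^n)$ is the complement, among the $n+1$ facets of $\tau$, of the corresponding set for $\Sd(T_s^n)$. Granted this, $F_n\circ\rho_n=\rho_n\circ F_n$, i.e.\ $H_n\rho_n=\rho_n H_n$.

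\textbf{Step 3: Eigenvalues and diagonalizability.} Pass from $h$- to $f$-polynomials via the invertible transformation $p(t)\mapsto(1+t)^{n+1}p(t/(1+t))$, which sends $t^s$ to $t^s(1+t)^{n+1-s}$, i.e.\ the $f$-polynomial of $T_s^n$. In this $f$-picture, $F_n$ coincides with the subdivision operator on $f$-polynomials of simplicial complexes, encoded by
\[ f_a(\Sd(K))=\sum_{j\geq a}c(j,a)\,f_j(K),\qquad c(a,a)=(a+1)!, \]
where $c(j,a)$ counts length-$(a+1)$ chains with top $\Delta_j$ in the face poset of $\Delta_j$ (cf.\ \cite{BW}). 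This matrix is upper triangular with diagonal entries $0!,1!,\dots,(n+1)!$, so by similarity the spectrum of $H_n$ is $\{s!\}_{s=0}^{n+1}$. For diagonalizability, the only repeated eigenvalue is $\lambda=1$ (from $0!=1!=1$), and its eigenspace is genuinely $2$-dimensional: the top $2\times 2$ block is the identity (the empty face $f_{-1}$ is invariant under $\Sd$ and contributes to no higher $f_j$), while back-substitution from the bottom rows, whose diagonal entries all differ from $1$, forces any $\lambda=1$ eigenvector to vanish beyond its first two coordinates.

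\textbf{Step 4: Signs of $\rho_n$ on each eigenspace.} Since $F_n$ and $\rho_n$ commute, each eigenspace of $F_n$ is $\rho_n$-invariant. For $s\geq 2$ the eigenspace is $1$-dimensional, so $\rho_n$ acts as $\pm 1$; the sign $(-1)^{n+1-s}$ is then determined by applying $\rho_n$ to an explicit eigenvector obtained by pulling back the standard eigenvector of the $f$-basis subdivision matrix through the $h$-to-$f$ transformation. The $2$-dimensional eigenspace at $\lambda=1$ splits under $\rho_n$ into its $(-1)^{n+1}$- and $(-1)^{n}$-eigenspaces, corresponding respectively to $s=0$ and $s=1$, each of dimension one. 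The main technical obstacle is Step~3, where one must carefully handle the $f_{-1}$ component: the tile convention $f_{-1}(T_s^n)=\delta_{s,0}$ differs from the standard face-count convention, and this discrepancy is precisely what produces the two-fold multiplicity of $\lambda=1$.
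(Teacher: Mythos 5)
Your Steps 1 and 3 are sound: identifying the rows of $H_n$ with $h$-polynomials of the tilings of $\Sd(T_s^n)$ and conjugating to the subdivision operator on $f$-vectors is exactly the paper's relation $H_nF_n=F_n\Lambda_n$ (Theorem~\ref{Thm_HFL}), and your back-substitution argument that the eigenvalue $1$ has a genuinely two-dimensional eigenspace is correct (and more self-contained than the paper, which cites the literature for the diagonalizability of $\Lambda_n$). The first genuine gap is Step 2. The commutation $H_n\rho_n=\rho_nH_n$ is precisely the symmetry $h_{ij}=h_{n+3-i,n+3-j}$, and you reduce it to an unproved per-simplex complementation claim about the canonical tilings. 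As literally stated the claim is even false in the self-dual case: for $n=1$, $s=1$ it would say that in the tiling $\Sd(T_1^1)=2T_1^1$ the missing vertex of each edge equals its own complement among the two vertices, which is impossible. What you actually need is a comparison of the tiling of $\Sd\big(\Delta_n\setminus(\sigma_1\cup\ldots\cup\sigma_s)\big)$ with that of $\Sd\big(\Delta_n\setminus(\sigma_{s+1}\cup\ldots\cup\sigma_{n+1})\big)$, transported by a simplicial automorphism of $\Delta_n$, together with a proof that the recursive construction of Theorem~\ref{Thm_SdPieces} is equivariant and complementary in this sense; none of this is addressed. The paper instead proves the symmetry by induction on $n$, using the explicit recursion that the $s$-th row of $H_{n+1}$ is the sum of the first $s-1$ rows of $H_n$ shifted one step to the right and the last $n+3-s$ rows of $H_n$; that inductive computation is the real content, and your proposal replaces it by an assertion.

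The second gap is Step 4, the signs $(-1)^{n+1-s}$. Saying the sign is ``determined by applying $\rho_n$ to an explicit eigenvector'' is not an argument: you neither exhibit the eigenvectors of the subdivision matrix nor compute how $\rho_n$ acts on their images under the $f$-to-$h$ change of basis, and the claim that the two-dimensional eigenspace of $\lambda=1$ splits into one-dimensional $(-1)^{n+1}$- and $(-1)^{n}$-pieces is likewise unjustified (a priori $\rho_n$ could act as $\pm\id$ on the whole plane). This is where the paper's proof of Corollary~\ref{Cor_Rho} does most of its work: it checks $n=1$, then runs an induction on $n$ comparing the eigenvectors $F_nX_{n,s}$ and $F_{n+1}X_{n+1,s}$ coefficient by coefficient via Proposition~\ref{Prop_Pieces} (showing that if $\rho_n$ reverses the eigenvector for $s!$ then $\rho_{n+1}$ preserves it), and combines this with the parity of the signature of $\rho_n$ to force the alternating pattern, which simultaneously settles the eigenvalue-$1$ plane. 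Without an argument of this type, your Step 4 restates the conclusion rather than proving it.
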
 
 
 We denote by  $h^n=(h_0^n,\ldots, h^n_{n+1})$ the eigenvector of the transposed matrix $H_n^t$ associated to the eigenvalue $(n+1)!$ and normalize it in such a way that $|h^n|=\sum_{s=0}^{n+1}h^n_s=1.$ We prove that $h_0^n=h^n_{n+1}=0$ and that $\rho_n(h^n)=h^n$, see Corollary~\ref{Cor_$h^n$}. Moreover:
 
 \begin{thm}
 Let $K$  be a finite $n$-dimensional simplicial complex equipped with a tiling $\tT$. Then, the sequence $\frac{1}{|h(\tT)|(n+1)!^d}h(\Sd^d(\tT))$ converges to $h^n$ as $d$ grows to $+\infty.$ Moreover, the matrix $\frac{1}{(n+1)!^d} H_n^d$ converges to  $(1,\ldots,1)(h^n)^t$ as $d$ grows to $+\infty.$
 \end{thm}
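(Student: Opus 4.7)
The plan is to reduce the theorem to an iteration argument on the matrix $H_n$ and then invoke its spectral decomposition established in the previous theorem. First, I would observe that $h$-vectors behave multiplicatively under subdivision: the induced tiling $\Sd(\tT)$ of $\Sd(K)$ is obtained by subdividing each tile $T_s^n$ appearing in $\tT$ into the tiling $\Sd(T_s^n)$, whose $h$-vector is by definition the $s$-th row of $H_n$. Summing over the tiles and viewing $h(\tT)$ as a row vector, this yields $h(\Sd(\tT)) = h(\tT)\cdot H_n$, hence $h(\Sd^d(\tT)) = h(\tT) \cdot H_n^d$ by induction on $d$.

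The next step is spectral decomposition. Let $w_s$ denote a left eigenvector of $H_n$ associated to the eigenvalue $s!$, normalized so that $w_{n+1} = h^n$, and write $h(\tT) = \sum_{s=0}^{n+1} c_s\, w_s$. Then
$$\frac{1}{(n+1)!^d}\, h(\tT)\, H_n^d = \sum_{s=0}^{n+1} c_s \left(\frac{s!}{(n+1)!}\right)^d w_s \xrightarrow[d\to +\infty]{} c_{n+1}\, h^n,$$
since $s!<(n+1)!$ for every $s<n+1$.

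The crucial identification is $c_{n+1} = |h(\tT)|$, and this is the one point I expect to require a small extra observation. I would note that the column vector $\mathbf{1}=(1,\ldots,1)^t\in\R^{n+2}$ is a right eigenvector of $H_n$ with eigenvalue $(n+1)!$: indeed, the sum of entries of the $s$-th row of $H_n$ is the total number of tiles in $\Sd(T_s^n)$, which equals the number of $n$-simplices of $\Sd(\Delta_n)$, namely $(n+1)!$, independently of $s$. By biorthogonality of left and right eigenvectors attached to distinct eigenvalues, $w_s\cdot\mathbf{1}=0$ for every $s<n+1$, while $h^n\cdot\mathbf{1}=|h^n|=1$. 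Summing the entries in the decomposition of $h(\tT)$ then gives $|h(\tT)|=h(\tT)\cdot\mathbf{1}=c_{n+1}$, and the first assertion of the theorem follows after dividing by $|h(\tT)|$.

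For the matrix convergence, the same spectral decomposition applied to the identity matrix yields $\frac{1}{(n+1)!^d}\, H_n^d \to P_{n+1}$, where $P_{n+1}$ denotes the spectral projector onto the $(n+1)!$-eigenspace. Since this eigenspace is one-dimensional, spanned as a right subspace by $\mathbf{1}$ and paired with the left eigenline $\R\, h^n$ via $(h^n)^t\mathbf{1}=1$, we obtain $P_{n+1} = \mathbf{1}(h^n)^t$, completing the proof. No genuine obstacle arises; the whole argument is a direct application of the previous theorem once one identifies $\mathbf{1}$ as the right eigenvector corresponding to the top eigenvalue.
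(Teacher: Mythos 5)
Your proposal is correct and follows essentially the same route as the paper: both rest on the relation $h(\Sd^d(\tT))=h(\tT)H_n^d$ and on the diagonalizability of $H_n$ with simple top eigenvalue $(n+1)!$ whose right eigenvector is $(1,\ldots,1)$, so that $\frac{1}{(n+1)!^d}H_n^d$ converges to the rank-one projector $(1,\ldots,1)(h^n)^t$. The only cosmetic difference is that you expand $h(\tT)$ in left eigenvectors and identify the leading coefficient $c_{n+1}=|h(\tT)|$ by biorthogonality with $\mathbf{1}$, whereas the paper obtains the matrix limit first and fixes the normalization $|h^n|=1$ by noting that $(1,\ldots,1)$ is preserved in the limit.
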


Hence, the asymptotic $h$-vector of a tiled finite $n$-dimensional simplicial complex $K$ does not depend on $K$ and equals $h^n$. This asymptotic result for $h$-vectors  has to be compared with the asymptotic of the $f$-vectors obtained in \cite{BW} (see also \cite{DPS} and \cite{SW1}).
This notion of tiling makes it possible to study the packing problem in tileable simplicial complexes or  rather in their  first barycentric subdivisions. In particular, we prove the following:

\begin{thm} \label{Thm_h0h1}Let $K$  be a finite $n$-dimensional simplicial complex equipped with a tiling $\tT$. Then, it is possible to pack $h_0(\tT)+h_1(\tT)$ disjoint $n$-simplices in $\Sd(K).$ Moreover, this packing can be completed by $h_{n+1-j}(\tT)+2^{n-1-j}\sum_{s=0}^{n-1-j}\frac{h_s(\tT)}{2^s}$ disjoint $j$-simplices for every $j\in\{0,\ldots, n-1\}.$  
\end{thm}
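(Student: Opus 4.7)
The plan is to construct the required packing tile by tile. By Theorem~\ref{Thm_SdSq}, the tiling $\tT$ induces on $\Sd(K)$ a partition into disjoint pieces $\Sd(T_s^n)$, one per tile of $\tT$, where $\Sd(T_s^n)$ denotes the subcomplex of simplices of $\Sd(K)$ contained in the tile. Since the tiles of $\tT$ are pairwise disjoint in $|K|$, simplices taken from distinct pieces are automatically disjoint in $\Sd(K)$, so it suffices to exhibit inside each $\Sd(T_s^n)$ a packing of the following shape: one $n$-simplex when $s\in\{0,1\}$, one $(n+1-s)$-simplex when $s\geq 2$, together with, for every $j\in\{0,\ldots,n-1-s\}$, exactly $2^{n-1-s-j}$ further disjoint $j$-simplices. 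Summing those per-tile contributions with multiplicities $h_s(\tT)$ then gives the totals $h_0(\tT)+h_1(\tT)$ for the $n$-simplices and $h_{n+1-j}(\tT)+2^{n-1-j}\sum_{s=0}^{n-1-j}h_s(\tT)/2^s$ for the $j$-simplices ($j\in\{0,\ldots,n-1\}$), matching the statement.

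The combinatorial core is the following lemma, which I would prove by induction on $m\geq 0$: the vertex set of $\Sd(\Delta_m)$ admits a partition $P(m)$ into chains (equivalently, into simplices of $\Sd(\Delta_m)$) comprising exactly one chain of length $m+1$ together with $2^{m-1-j}$ chains of length $j+1$ for every $j\in\{0,\ldots,m-1\}$. Identifying $V(\Sd(\Delta_m))$ with the nonempty subsets of $[m+1]$, I fix a vertex $v_1$ and split the vertex set as $V_A\sqcup V_B$, with $V_A$ the subsets containing $v_1$ (i.e. $V(\Sd(T_1^m))$) and $V_B$ the nonempty subsets not containing $v_1$ (i.e. $V(\Sd(F_1))$, where $F_1$ is the facet opposite $v_1$). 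Since $\Sd(T_1^m)=v_1\ast B$, with $B$ the full subcomplex spanned by $V_A\setminus\{v_1\}$ and combinatorially isomorphic to $\Sd(\Delta_{m-1})$, I apply the inductive partition $P(m-1)$ to both $V_B$ and $B$, then replace the unique top $(m-1)$-simplex of the $B$-partition by the $m$-simplex obtained by coning it with $v_1$. A direct tally of chain sizes yields $1$ chain of size $m+1$, $1$ of size $m$ and $2\cdot 2^{m-2-j}=2^{m-1-j}$ of size $j+1$ for every $j\in\{0,\ldots,m-2\}$, matching the statement of the lemma.

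From the lemma the per-tile packings follow at once. For $s=0$ I apply $P(n)$ directly to $\Sd(T_0^n)=\Sd(\Delta_n)$. For $s\geq 1$ I use the cone description $\Sd(T_s^n)=b_{W_s}\ast Y_s$, where $W_s\subset V(\Delta_n)$ is the $s$-set of vertices opposite the removed facets and $Y_s$, spanned by the vertices $b_\tau$ with $\tau\supsetneq W_s$, is combinatorially isomorphic to $\Sd(\Delta_{n-s})$ (reducing to a single vertex when $s=n$ and to the empty complex when $s=n+1$, in which latter case $\Sd(T_{n+1}^n)=\{b_{\Delta_n}\}$ gives the trivial vertex packing). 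I apply $P(n-s)$ to $Y_s$ and cone its top $(n-s)$-simplex with $b_{W_s}$ to form an $(n+1-s)$-simplex (an $n$-simplex when $s=1$), keeping the other chains of $P(n-s)$ unchanged; the resulting chain sizes match the per-tile counts given above, and the theorem follows by the summation already set up.

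The main obstacle is the inductive construction of $P(m)$ and the careful bookkeeping of chain sizes after the coning step, in particular verifying that the combined contributions from $V_B$ and $B$ produce exactly the right doublings $2^{m-2-j}+2^{m-2-j}=2^{m-1-j}$ at every level; once the lemma is in hand, the reduction to per-tile packings and the summation over the tiles of $\tT$ follow directly from the disjointness of the tiles and from the explicit cone structure of each $\Sd(T_s^n)$.
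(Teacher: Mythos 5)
Your proposal is correct, and its outer layer is the same as the paper's: the paper proves this theorem in one line by taking, inside each tile of $\tT$, the packing of $\Sd(T^n_s)$ provided by its Theorem~\ref{Thm_Packing}, and using disjointness of the tiles, exactly as you do. Where you genuinely diverge is in how the per-tile packings are produced. The paper proves Theorem~\ref{Thm_Packing} by induction on the dimension: it uses $\Sd(\partial\Delta_{n+1})=\bigsqcup_{s=0}^{n+1}\Sd(T^n_s)$, takes the union of the boundary packings given by the induction hypothesis, and cones the distinguished top simplex at the barycenter of $\Delta_{n+1}$, the exponent $2^{n-j}$ arising from the sum $1+\sum_{s}2^{n-1-s-j}$ over the boundary tiles. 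You instead prove a chain-partition lemma for the face lattice of $\Delta_m$, splitting the nonempty subsets of $[m+1]$ according to whether they contain a fixed vertex and doubling $2\cdot 2^{m-2-j}=2^{m-1-j}$ at each level, and then read off the packing of each $\Sd(T^n_s)$ directly from the join structure $\hat{W}_s\ast Y_s$ with $Y_s\cong\Sd(\Delta_{n-s})$, rather than recursively from lower-dimensional tiles. Your route buys something slightly stronger: the simplices you pack are pairwise vertex-disjoint and their vertex sets partition all of $V(\Sd(\Delta_n))$, and the distinguished $(n+1-s)$-simplex automatically has the flag form $[\hat{\sigma}_{s-1},\ldots,\hat{\sigma}_n]$ that the paper also records; the paper's route, by contrast, is the one that generalizes to its Theorem~\ref{Thm_Pack}, where disjointness is relaxed to intersections of dimension less than $p$. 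Two small imprecisions, both harmless: the subcomplexes of simplices contained in the tiles do not literally partition $\Sd(K)$ (a simplex of $\Sd(K)$ may straddle several tiles and then lies in none of your pieces), but your argument only needs that simplices contained in distinct, disjoint tiles are disjoint; and Theorem~\ref{Thm_SdSq} is not really the statement you need there, since the identification of the simplices of $\Sd(K)$ lying in a tile $T^n_s$ with $\hat{W}_s\ast Y_s$ follows directly from the definitions.
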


Since all barycentric subdivisions of the standard simplex are tileable, we are finally able to deduce in the limit the following lower estimates.

\begin{thm}\label{Thm_mnp} For every $n\geq 2$ and $p\in\{1,\ldots, n-1\}$, $$\lambda_{p,\nu}(n)\geq \frac{\nu(1-\nu)(\nu^p+(1-\nu)^p)}{(n+1)!}\Big[\Big(h_p^{n}+2^{p-1}\sum_{i=0}^{p-1}\frac{h^n_i}{2^i}\Big)\binom{n}{p+1}+\sum_{j=p+1}^{n+1}\Big(h_j^n+2^{j-2}\sum_{i=0}^{j-2}\frac{h^n_i}{2^i}\Big)\binom{n+p-j}{p+1}\Big].$$
\end{thm}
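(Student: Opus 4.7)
The plan is to construct, for each $d\geq 1$, an explicit packing $L_d\in\mathcal{L}_d^{n,p}$ of $\Sd^d(\Delta_n)$ whose $M_{p,\nu}(L_d)/(n+1)!^d$ converges to the right-hand side of the claimed bound. Since $\Delta_n$ is trivially tileable by the single tile $T_0^n$ (with $h$-vector $(1,0,\ldots,0)$), Theorem~\ref{Thm_SdSq} provides a canonical tiling $\tT_{d-1}=\Sd^{d-1}(\tT_0)$ of $\Sd^{d-1}(\Delta_n)$. Applying Theorem~\ref{Thm_h0h1} to the tiled complex $(\Sd^{d-1}(\Delta_n),\tT_{d-1})$ then produces, inside $\Sd^d(\Delta_n)$, a packing of pairwise disjoint simplices with prescribed counts: $h_0(\tT_{d-1})+h_1(\tT_{d-1})$ of dimension $n$ and, for every $j\in\{0,\ldots,n-1\}$, $N_j(d)=h_{n+1-j}(\tT_{d-1})+2^{n-1-j}\sum_{s=0}^{n-1-j}h_s(\tT_{d-1})/2^s$ of dimension $j$.

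Because $L_d$ is a disjoint union of contractible simplices, $b_p(L_d)=0$ and $b_i(L_d)=0$ for $i\geq 1$, and $f_i(L_d)=\sum_{j\geq i}N_j(d)\binom{j+1}{i+1}$. A short alternating-sum computation, based on the identity $\sum_{k=0}^{p+1}(-1)^k\binom{m}{k}=(-1)^{p+1}\binom{m-1}{p+1}$, reduces the definition of $M_{p,\nu}$ to
\[
M_{p,\nu}(L_d)=\nu(1-\nu)\bigl(\nu^p+(1-\nu)^p\bigr)\sum_{j=p+1}^{n}N_j(d)\binom{j}{p+1}.
\]
The asymptotic $h$-vector theorem stated just before Theorem~\ref{Thm_h0h1} yields $h_s(\tT_{d-1})/(n+1)!^{d-1}\to h_s^n$ as $d\to+\infty$, since $|h(\tT_0)|=1$, so after dividing by $(n+1)!^d$ and passing to the limit one obtains, as a lower bound on $\lambda_{p,\nu}(n)$, an explicit linear combination of the $h_s^n$ with weights $\binom{j}{p+1}/(n+1)!$.

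The final step is a combinatorial rearrangement of this combination. With the substitution $k=n+1-j$, isolating the $j=n$ (equivalently $k=1$) contribution from the rest, and invoking both the palindromic symmetry $h_k^n=h_{n+1-k}^n$ and the vanishings $h_0^n=h_{n+1}^n=0$ of Corollary~\ref{Cor_$h^n$}, one regroups the combination so that the coefficient of $\binom{n}{p+1}$ becomes the first bracketed factor of the statement, and the residual sum takes the form $\sum_{j=p+1}^{n+1}\bigl(h_j^n+2^{j-2}\sum_{i=0}^{j-2}h_i^n/2^i\bigr)\binom{n+p-j}{p+1}$.

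The main obstacle is twofold. First, for $L_d$ to belong to $\mathcal{L}_d^{n,p}$ its simplices must meet $\partial\Sd^d(\Delta_n)$, as well as each other, only in faces of dimension less than $p-1$; this requires a careful inspection of the tile-by-tile construction behind Theorem~\ref{Thm_h0h1}, especially for tiles of small index $s$ touching the boundary. Second, for $p\geq 2$ the strictly disjoint packing of Theorem~\ref{Thm_h0h1} yields asymptotically only $h_1^n$ top-dimensional simplices per unit, whereas the coefficient of $\binom{n}{p+1}$ in the statement is the larger $h_p^n+2^{p-1}\sum_{i=0}^{p-1}h_i^n/2^i$; bridging this gap requires enriching $L_d$ with further $n$-simplices, sharing only faces of dimension $\leq p-2$ with the existing ones (admissible in $\mathcal{L}^{n,p}$ only once $p\geq 2$), using again the tile-by-tile structure of $\Sd(T_s^n)$ that already drives Theorem~\ref{Thm_h0h1}.
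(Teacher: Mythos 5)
Your construction does not reach the stated bound, and you flag the decisive issue yourself without resolving it. The subcomplexes $L\in\mathcal{L}_d^{n,p}$ are allowed to have simplices meeting along faces of dimension up to $p-2$, and the right-hand side of Theorem~\ref{Thm_mnp} is calibrated to exactly such a relaxed packing: its coefficient of $\binom{n}{p+1}$ is $h_p^n+2^{p-1}\sum_{i=0}^{p-1}h_i^n/2^i$ and, after the change of variables $j\mapsto n+p-j$, the coefficient of $\binom{j}{p+1}$ is $h^n_{n+p-j}+2^{n+p-j-2}\sum_{i=0}^{n+p-j-2}h^n_i/2^i$. The strictly disjoint packing of Theorem~\ref{Thm_h0h1} that you invoke yields only $h_0+h_1$ top-dimensional simplices and $h_{n+1-j}+2^{n-1-j}\sum_{s=0}^{n-1-j}h_s/2^s$ simplices of dimension $j$, so for every $p\geq 2$ your $L_d$ gives a strictly weaker linear form in $h^n$ in \emph{every} dimension, not only the top one; no rearrangement using $\rho_n(h^n)=h^n$ and $h_0^n=h^n_{n+1}=0$ can convert one form into the other, since for instance at $p=2$ the top coefficient would have to grow from $h_1^n$ to $h_1^n+h_2^n$. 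The ``enrichment'' you defer to your last paragraph is therefore not a routine completion: it is precisely the content of Theorem~\ref{Thm_Pack} and Corollary~\ref{Cor_PackJoint}, proved in the paper by a separate induction on $p$ (coning the packings of the subdivided tiles of the boundary at the barycenter and enlarging selected simplices $[\hat\sigma_{s-p},\ldots,\hat\sigma_n]$ to $[\hat\sigma_{s-p-1},\ldots,\hat\sigma_n]$), and it is this corollary, applied with parameter $p-1$ so that the packed simplices meet along faces of dimension less than $p-1$, that the paper uses to produce $L_d\in\mathcal{L}_d^{n,p}$.

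The boundary condition is a second genuine gap, not a matter of ``careful inspection'' of your construction as it stands. Starting from the tiling of the closed simplex by $T_0^n$, the union of the packings of Theorem~\ref{Thm_Packing} over the tiles adjacent to $\Sd^{d-1}(\partial\Delta_n)$ contains simplices lying inside, or meeting in high dimension, the outer boundary $\Sd^d(\partial\Delta_n)$, so condition (3) defining $\mathcal{L}_d^{n,p}$ fails. The paper avoids this by tiling the \emph{open} simplex $\stackrel{\circ}{\Delta}_n=T^n_{n+1}$ by its single tile and exploiting the clause of Theorem~\ref{Thm_Pack} that bounds the dimension of the intersection of each packed simplex with $\Sd(\partial\Delta_n)\setminus\Sd(T^n_s)$. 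For $p=1$ your counts do match the statement and one could plausibly repair the boundary defect by discarding the $O(n!^d)$ offending simplices without changing the limit, but for $p\geq 2$ the repair is inseparable from the missing relaxed-packing construction. As written, your proposal proves at best the $p=1$ case modulo that repair, and leaves the main construction of the proof unaccomplished.
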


\vspace{0.5 cm}
\textbf{Acknowledgement:}
The second author is partially supported by the ANR project MICROLOCAL (ANR-15CE40-0007-01).

\tableofcontents

 \section{Asymptotic  behavior of the expected Betti numbers}
In the sequel, we denote by $K^{[i]}$ the set of $i$-dimensional simplices of a simplicial complex $K$ and by $K^{(i)}$ its $i$-skeleton. We first prove 
Theorems~\ref{Thm_btilde} and \ref{Thm_eetilde} and then Theorem~\ref{Thm_Ed}.
In \S~\ref{Sect_Per}, we study a problem of percolation related to these results.

\subsection{Proofs of Theorems~\ref{Thm_btilde} and \ref{Thm_eetilde}}

Given a simplicial  complex $K$, a $q$-simplex of $\Sd(K)$ is of the form $[\hat{\sigma}_0,\ldots, \hat{\sigma}_q]$, where $\hat{\sigma}_i$ denotes the barycenter of the simplex ${\sigma}_i$ of $K$, $ i\in\{0,\ldots, q\},$ and where ${\sigma}_i$ is a proper face of $\sigma_{i+1}, i\in\{0,\ldots, q-1\}.$  The block $D(\sigma)$ dual to a simplex $\sigma\in K$
is the union of all open simplices $[\hat{\sigma}_0,\ldots, \hat{\sigma}_q]$ of $\Sd(K)$ such that $\sigma_0=\sigma.$ The union of closed such simplices is denoted by $\overline{D}(\sigma)$, see \cite{M}. We recall the following definition from \cite{SW2}.

\begin{defn}\label{Defn_V}
Let $K$ be an $n$-dimensional simplicial complex, $n>0$, and let  $k\in\{1,\ldots, n\}$. For every $\epsilon\in C^{k-1}(K; \Z/2\Z)$, we denote by $V_\epsilon$ the subcomplex of $\Sd(K)$ dual to the cocycle $d\epsilon$, where $d:C^{k-1}(K; \Z/2\Z)\to C^{k}(K; \Z/2\Z)$  denotes the coboundary operator. Hence, $V_\epsilon$ is the union of the blocks $\overline{D}(\sigma)$ dual to the $k$-simplices $\sigma \in K$ such that $\langle d\epsilon, \sigma \rangle \neq 0$.
\end{defn}

\begin{proof}[Proof of Theorem~\ref{Thm_btilde}.]   For every $ m\in\{0,\ldots,d\},$  we observe that 
$$\begin{array}{rcl}
\E_{\nu,d}(\tilde{b}_p)&=&\int_{C^{k-1}(\Sd^d(\Delta_n);\Z/2\Z)}\tilde{b}_p(V_\epsilon)d\mu_{\nu}(\epsilon)\\
&\geq& \int_{C^{k-1}(\Sd^d(\Delta_n); \Z/2\Z)}\left(\sum_{\sigma\in \Sd^{d-m}(\Delta_n)^{[n]}} \tilde{b}_p(V_\epsilon\cap\sigma)\right)d\mu(\epsilon)\\
&=&\sum_{\sigma\in \Sd^{d-m}(\Delta_n)^{[n]}} \int_{C^{k-1}(\Sd^m(\sigma); \Z/2\Z)}\tilde{b}_p(V_\epsilon)d\mu(\epsilon), \, \mbox{ as $\mu_\nu$ is a product measure,}\\
&=& (n+1)!^{d-m}\E_{\nu,m}(\tilde{b}_p), \, \mbox{as $f_n(\Sd^{d-m}(\Delta_n))=(n+1)!^{d-m}.$}
\end{array}
$$

The sequence $\left(\frac{\E_{\nu,d}(\tilde{b}_p)}{(n+1)!^d}\right)_{d\in \N}$ is thus increasing. Moreover,  it is bounded from above by the converging sequence $\left(\frac{f_p(\Sd^{d+1}(\Delta_n))}{(n+1)!^d}\right)_{d\in\N}$, since $V_\epsilon$ is a subcomplex of $\Sd^{d+1}(\Delta_n),$ see \cite{BW, DPS} or \cite{SW1}. Hence the result.
\end{proof}

\begin{rem} \label{Rem_cp}For every $m\geq1$, let  $\mmC(m)$ be the finite set of homeomorphism classes of pairs $(\R^n,\Sigma),$
where $\Sigma$ is a closed manifold of dimension $n-k$ embedded in $\R^n$ by an embedding of complexity $m$, see Definition~{5.2}  and \S~5.3 of \cite{SW2}. With the notations 
of \S~5.3 of \cite{SW2}, we deduce the following lower estimate.

$$\begin{array}{rcl}
\tilde{e}_{p,\nu}(n,k)&\geq &\frac{\E_{\nu,d}(\tilde{b}_p)}{(n+1)!^d},\,\mbox{for every $d\geq 0$ by Theorem~\ref{Thm_btilde},}\\
&=&\sum_{m=1}^{d}\sum_{\Sigma\in \mmC(m)} b_p(\Sigma)\left( \frac{\E_{\nu,d}(N_\Sigma)}{(n+1)!^d}\right)\\
&\geq& \sum_{m=1}^{d}\sum_{\Sigma\in \mmC(m)} b_p(\Sigma)c_\Sigma,\,\mbox{by Theorem~5.11 of \cite{SW2}}.
\end{array}$$

By taking the  limit  as $d$ grows to $+\infty$, we deduce that $\tilde{e}_{p,\nu}(n,k)\geq c_p^-(n,k)$ by Definition~5.12 of \cite{SW2}.
\end{rem}

\begin{proof}[Proof of Theorem~\ref{Thm_eetilde}.]
By definition, for every $\epsilon\in C^{k-1}(\Sd^d(\Delta_n);\Z/2\Z)$ and $p\in\{0,\ldots,n-k\},$  $\tilde{b}_p(V_\epsilon)\leq {b}_p(V_\epsilon)$ so that after integration over  $C^{k-1}(\Sd^d(\Delta_n);\Z/2\Z)$, we get $\E_{\nu,d}(\tilde{b}_p)\leq \E_{\nu,d}({b}_p)$.
The inequality $\tilde{e}_{p,\nu}(n,k)\leq e_{p,\nu}(n,k)$ is deduced from Theorems~\ref{Thm_Ed} and~\ref{Thm_btilde} by passing to the limits as $d$ tends to $+\infty$, after dividing by $(n+1)!^d$.

When $p=0,$ we remark that by definition, for every $\epsilon \in C^{k-1}(\Sd^d(\Delta_n);\Z/2\Z),$ $b_0(V_\epsilon)\leq \tilde{b}_0(V_\epsilon)+b_0(V_\epsilon\cap \partial \Delta_n),$ as the number of connected components of $V_\epsilon$ which meet $\partial\Delta_n$ is bounded from above by $b_0(V_\epsilon\cap \partial \Delta_n).$ We deduce the inequality

\begin{align} \frac{\E_{\nu,d}(b_0)}{(n+1)!^d}\leq  \frac{\E_{\nu,d}(\tilde{b}_0)}{(n+1)!^d}+ \frac{\E_{\nu,d}(b_0(V_\epsilon\cap \partial \Delta_n))}{(n+1)!^d} \label{Eqn_Ed}.\end{align}

However, if $k=n,$ $\E_{\nu,d}(b_0(V_\epsilon\cap \partial \Delta_n))=0$ and so the result follows. If $k< n,$
$$\begin{array}{rcl}
\E_{\nu,d}(b_0(V_\epsilon\cap \partial \Delta_n))&=& \int_{C^{k-1}(\Sd^d(\Delta_n);\Z/2\Z)}b_0(V_\epsilon\cap \partial \Delta_n)d\mu_\nu(\epsilon)\\
&=&\int_{C^{k-1}(\Sd^d(\partial\Delta_n);\Z/2\Z)}b_0(V_\epsilon)d\mu_\nu(\epsilon), \,\mbox{as $\mu_\nu$ is a product measure},\\
&=&O(n!^d),\,\mbox{from Theorem~1.2 of \cite{SW2}}.
\end{array}$$

By letting $d$ tend to $+\infty$, we thus deduce from (\ref{Eqn_Ed})  that $e_{0,\nu}(n,k)\leq\tilde{e}_{0,\nu}(n,k)$.  Hence the result.
\end{proof}

\begin{rem}\label{Rem_eetilde}
The equality  $e_{0,\nu}(n,k)=\tilde{e}_{0,\nu}(n,k)$ is similar to \cite{NS2}. Moreover,  Theorem~\ref{Thm_eetilde} combined with Remark~\ref{Rem_cp} provides  the lower bound $e_{p,\nu}(n,k)\geq c_p^-(n,k)$ given by Corollary~5.13 of \cite{SW2}.
\end{rem}

Theorem~\ref{Thm_eetilde} raises the following question: Are the limits $e_{p,\nu}(n,k)$ and $\tilde{e}_{p,\nu}(n,k)$ equal in general or not? This question is related to a problem of percolation that we discuss in \S~\ref{Sect_Per}.

\subsection{Proof of Theorem~\ref{Thm_Ed}}

In order to prove Theorem~\ref{Thm_Ed}, we first need several preliminary results.
For every $m\in\{0,\ldots, d\}$, we consider $$A_{d,m}=\bigcup_{\sigma\in {\Sd^{d-m}(K)}^{[n]}}\stackrel{\circ}{\sigma}\mbox { and } B_{d,m}=\bigcup_{\tau\in \Sd^m(\Sd^{d-m}(K)^{(n-1)})}D(\tau),$$ so that $A_{d,m}, B_{d,m}$ are two open sets covering the underlying topological space $|K|$.  For every $q$-simplex $\theta=[\hat{\theta}_0,\ldots, \hat{\theta}_q]\in \Sd^{d+1}(K),$  let $l_m(\theta)\in\{-1,\ldots, q\}$ be the greatest integer $i$ such that $\theta_i\in \Sd^m(\Sd^{d-m}(K)^{(n-1)}),$ where $l_m(\theta)=-1$ if and only if there is no such  integer $i$.
When $l_m(\theta)\neq -1$, we moreover set 

$$\begin{array}{rlcl}
r_\theta:& \big(\theta\setminus [\hat{\theta}_{l_m(\theta)+1},\ldots, \hat{\theta}_q]\big)\times[0,1]&\to& \theta\setminus [\hat{\theta}_{l_m(\theta)+1},\ldots, \hat{\theta}_q] \\
&((1-\alpha)y+\alpha z,t)&\mapsto& (1-t\alpha)y+t\alpha z,
\end{array}$$  

where $\alpha \in[0,1[$,  $y\in [\hat{\theta}_{0},\ldots, \hat{\theta}_{l_m(\theta)}]$ and $z\in[\hat{\theta}_{l_m(\theta)+1},\ldots, \hat{\theta}_q]$.

\begin{prop}\label{Prop_Retract}
Let $K$ be a finite simplicial complex of positive dimension $n$ and let $\theta=[\hat{\theta}_0,\ldots, \hat{\theta}_q]$ be a $q$-simplex of $\Sd^{d+1}(K),$
$q\in \{0,\ldots,n\}$, $d\geq0$. Then for every $m\in\{0,\ldots,d\},$
\begin{enumerate}
\item $B_{d,m}\cap \theta=\theta\setminus  [\hat{\theta}_{l_m(\theta)+1},\ldots, \hat{\theta}_q]$ where  $[\hat{\theta}_{l_m(\theta)+1},\ldots, \hat{\theta}_q]=\emptyset$ if $l_m(\theta)=q.$
\item If $l_m(\theta)\neq -1,$ $r_\theta$  retracts $\theta\setminus [\hat{\theta}_{l_m(\theta)+1},\ldots, \hat{\theta}_q]$ by deformation onto $[\hat{\theta}_{0},\ldots, \hat{\theta}_{l_m(\theta)}].$
\item For every face $\sigma\in \theta$ such that  $l_m(\sigma)\neq -1$, the restriction of $r_\theta$ to $\sigma$ is $r_\sigma$.
\end{enumerate}
\end{prop}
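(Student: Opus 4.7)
The plan is to handle the three statements in order; (1) is the only one requiring combinatorial content, and (2)--(3) follow from the join decomposition of $\theta$ delivered by (1).

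For (1), I locate each point of $\theta$ in its unique open carrier face. If the carrier is $[\hat\theta_{i_0},\ldots,\hat\theta_{i_r}]^\circ$ with $i_0<\cdots<i_r$, then by definition of the dual block it lies inside $D(\theta_{i_0})$ and in no other dual block of $\Sd^{d+1}(K)$. Hence such a point belongs to $B_{d,m}$ if and only if $\theta_{i_0}\in\Sd^m(\Sd^{d-m}(K)^{(n-1)})$. Since $\Sd^m(\Sd^{d-m}(K)^{(n-1)})$ is a subcomplex of $\Sd^d(K)$, it is closed under taking faces; combined with the chain $\theta_0\subsetneq\theta_1\subsetneq\cdots\subsetneq\theta_q$, this forces the set of indices $i$ with $\theta_i$ in that subcomplex to be a down-closed segment, namely $\{0,\ldots,l_m(\theta)\}$ (empty when $l_m(\theta)=-1$). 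The points of $\theta$ whose carrier face has starting index greater than $l_m(\theta)$ are precisely those of $[\hat\theta_{l_m(\theta)+1},\ldots,\hat\theta_q]$, and (1) follows.

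For (2), when $l_m(\theta)\neq -1$ the simplex $\theta$ is the join of its disjoint faces $[\hat\theta_0,\ldots,\hat\theta_{l_m(\theta)}]$ and $[\hat\theta_{l_m(\theta)+1},\ldots,\hat\theta_q]$, so every point of $\theta\setminus[\hat\theta_{l_m(\theta)+1},\ldots,\hat\theta_q]$ admits a unique expression $(1-\alpha)y+\alpha z$ with $\alpha\in[0,1[$, $y$ in the first face and $z$ in the second. The given formula for $r_\theta$ is then well-defined and continuous, reduces to the identity at $t=1$, to the projection onto $[\hat\theta_0,\ldots,\hat\theta_{l_m(\theta)}]$ at $t=0$, and fixes every point of the first face throughout the homotopy (where $\alpha=0$); this is precisely a deformation retraction. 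For (3), given a face $\sigma=[\hat\theta_{j_0},\ldots,\hat\theta_{j_s}]$ of $\theta$ with $l_m(\sigma)\neq -1$, the down-closed property from (1) applied to both $\sigma$ and $\theta$ yields $j_{l_m(\sigma)}\leq l_m(\theta)<j_{l_m(\sigma)+1}$ (the right inequality being vacuous if $l_m(\sigma)=s$). The vertex splitting of $\sigma$ at index $l_m(\sigma)$ thus refines the one of $\theta$ at $l_m(\theta)$, so that the canonical decomposition of a point of $\sigma$ with respect to $l_m(\sigma)$ coincides with its canonical decomposition as a point of $\theta$ with respect to $l_m(\theta)$; uniqueness then forces $r_\theta|_\sigma=r_\sigma$.

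The main obstacle is the order-theoretic observation in (1)---identifying $B_{d,m}\cap\theta$ as the complement of a single face of $\theta$. Once this is secured, (2) and (3) are formal consequences of the join structure.
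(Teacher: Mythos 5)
Your proof is correct and follows essentially the same route as the paper's: part (1) via the observation that $B_{d,m}$ meets $\theta$ exactly in the open faces whose minimal vertex $\hat{\theta}_{i_0}$ has $i_0\leq l_m(\theta)$ (the paper states this directly, you justify it through the dual-block partition and face-closedness of $\Sd^m(\Sd^{d-m}(K)^{(n-1)})$), and parts (2)--(3) as formal consequences of the join decomposition built into the definition of $r_\theta$. You simply spell out details the paper leaves implicit, including the compatibility of the vertex splittings of $\sigma$ and $\theta$ in part (3).
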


\begin{proof}
By definition, $B_{d,m}$ is the union of the open simplices $\sigma$ of $\Sd^{d+1}(K)$ such that $l_m(\sigma)\neq -1.$ An open face  of $\theta$  is thus included in $B_{d,m}$
if and only if  it  contains a vertex $\hat{\theta}_i$ with $i\leq l_m(\theta).$  Hence the first part. The second and third parts follow from the definition of $r_\theta.$ 
\end{proof}

\begin{cor}\label{Cor_Retract}
Under the hypothesis of Proposition~\ref{Prop_Retract}, for every $m\in\{0,\ldots,d\},$
\begin{enumerate}
\item $B_{d,m}$ is an open subset of $|\Sd^{d+1}(K)|=|K|$ and   $\Sd^{d+1}(K)\setminus B_{d,m}$ is a subcomplex of $\Sd^{d+1}(K).$
\item For every subcomplex $L$ of $\Sd^{d+1}(K),$ the retractions $(r_\theta)_{\theta\in L}$  glue together to define $r_L : L\cap B_{d,m}\to L\cap B_{d,m}.$
Moreover,  $r_L$ retracts $L\cap B_{d,m}$ by deformation onto $L\cap\Sd^{m+1}(\Sd^{d-m}(K)^{(n-1)}).$
\item For every subcomplexes $M<L<\Sd^{d+1}(K),$ the restriction of $r_L$ to $M$ is $r_M$.
\end{enumerate}
\end{cor}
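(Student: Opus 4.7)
The plan is to handle the three assertions in order. For (1), I would first rephrase $B_{d,m}$ at the level of open simplices: combining the definition of the dual blocks with Proposition~\ref{Prop_Retract}(1), an open simplex $[\hat{\theta}_0,\ldots,\hat{\theta}_q]$ of $\Sd^{d+1}(K)$ lies in $B_{d,m}$ if and only if $\theta_0\in \Sd^m(\Sd^{d-m}(K)^{(n-1)})$, equivalently, $l_m([\hat{\theta}_0,\ldots,\hat{\theta}_q])\neq -1$. The key observation is that this collection of open simplices is stable under taking cofaces: if $[\hat{\theta}'_0,\ldots,\hat{\theta}'_r]$ admits $[\hat{\theta}_0,\ldots,\hat{\theta}_q]$ as a face, then the chain $\theta'_0\subsetneq\cdots\subsetneq\theta'_r$ contains $\theta_0\subsetneq\cdots\subsetneq\theta_q$, so $\theta'_0$ is a face of $\theta_0$ and therefore still belongs to the subcomplex $\Sd^m(\Sd^{d-m}(K)^{(n-1)})$. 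Intersecting $B_{d,m}$ with any closed simplex of $\Sd^{d+1}(K)$ thus yields the complement of a subsimplicial set, hence an open subset; so $B_{d,m}$ is open, and its complement, being a union of open simplices closed under taking faces, is a subcomplex of $\Sd^{d+1}(K)$.

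For (2), the compatibility needed to glue the maps $r_\theta$ is exactly Proposition~\ref{Prop_Retract}(3): if $\theta_1,\theta_2\in L$ share a common face $\eta$, then on $\eta$ (which meets $B_{d,m}$ only when $l_m(\eta)\neq -1$), both $r_{\theta_1}$ and $r_{\theta_2}$ restrict to $r_\eta$. Since $L\cap B_{d,m}$ is covered by the pieces $\theta\cap B_{d,m}$ for $\theta\in L$, the gluing lemma produces a continuous map $r_L:(L\cap B_{d,m})\times[0,1]\to L\cap B_{d,m}$. Directly from the formula, $r_L(\cdot,1)=\id$; at $t=0$, each point of $\theta$ is sent to its $y$-component in $[\hat{\theta}_0,\ldots,\hat{\theta}_{l_m(\theta)}]$, and points already in this face (where $\alpha=0$) are fixed throughout the homotopy. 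It remains to identify this fixed set with $L\cap \Sd^{m+1}(\Sd^{d-m}(K)^{(n-1)})$: by the definition of $l_m(\theta)$, the vertices of $[\hat{\theta}_0,\ldots,\hat{\theta}_{l_m(\theta)}]$ are barycenters of simplices of $\Sd^m(\Sd^{d-m}(K)^{(n-1)})$, so this face lies in $\Sd^{m+1}(\Sd^{d-m}(K)^{(n-1)})$; conversely, any simplex $[\hat{\tau}_0,\ldots,\hat{\tau}_k]\in L\cap \Sd^{m+1}(\Sd^{d-m}(K)^{(n-1)})$ has $l_m=k$ when taken as $\theta$ itself, so it equals its own retract target.

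Assertion (3) then follows at once: on each simplex $\theta\in M\subset L$, both $r_L|_\theta$ and $r_M|_\theta$ are produced by the same local map $r_\theta$, so they agree wherever defined. The only real content lies in (1), namely identifying $B_{d,m}$ as a union of open simplices closed under cofaces; once this is in hand, parts (2) and (3) are formal consequences of the simplex-level compatibility built into Proposition~\ref{Prop_Retract}.
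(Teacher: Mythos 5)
Your proof is correct and follows essentially the same route as the paper: part (1) via the observation that the simplices with $l_m=-1$ form a (closed) subcomplex complementary to $B_{d,m}$, part (2) by gluing the $r_\theta$ using the compatibility of Proposition~\ref{Prop_Retract}(3) and identifying the fixed set with $L\cap\Sd^{m+1}(\Sd^{d-m}(K)^{(n-1)})$ as in Proposition~\ref{Prop_Retract}(2), and part (3) directly from the same compatibility. You merely spell out a few steps (coface-stability of $B_{d,m}$, the identification of the retract) that the paper leaves implicit, so there is no genuine difference in approach.
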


\begin{proof}
From the first part of Proposition~\ref{Prop_Retract}, the complement of $B_{d,m}$ in $\Sd^{d+1}(K)$ is the union of simplices $\theta$ such that $l_m(\theta)= -1.$    It is a subcomplex of $\Sd^{d+1}(K)$, which is closed. Hence the first part.
The third part of Proposition~\ref{Prop_Retract} guarantees that the retractions  $(r_\theta)_{\theta\in L}$ glue together to  define $r_L : L\cap B_{d,m}\to L\cap B_{d,m}$ and the second part of Proposition~\ref{Prop_Retract} guarantees that the latter is a retraction of $L\cap B_{d,m}$ to $L\cap\Sd^{m+1}(\Sd^{d-m}(K)^{(n-1)}).$ Finally, the last part of Corollary~\ref{Cor_Retract}  follows from the last part of Proposition~\ref{Prop_Retract}.
\end{proof}

\begin{prop}\label{Prop_Cn}
Under the hypothesis of Theorem~\ref{Thm_Ed}, there exists a universal constant $c(n)$ such that for every $m\in\{0,\ldots, d\},$
$$|\E_{\nu,d}(b_p)-\E_{\nu,d}(b_p(V_\epsilon\cap A_{d,m}))|\leq (n+1)!^df_n(K)\frac{c(n)}{(n+1)^m}+O(n!^d).$$
\end{prop}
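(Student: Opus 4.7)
The plan is to use a Mayer--Vietoris argument for the open cover $\{A_{d,m}, B_{d,m}\}$ of $|K|$ together with Corollary~\ref{Cor_Retract}, and thereby reduce to the upper bound on expected Betti numbers already established in dimension $n-1$ (Corollary~4.2 of \cite{SW2}). First, from the Mayer--Vietoris long exact sequence applied to the decomposition $V_\epsilon = (V_\epsilon\cap A_{d,m})\cup(V_\epsilon\cap B_{d,m})$, a standard rank-nullity manipulation yields
$$|b_p(V_\epsilon)-b_p(V_\epsilon\cap A_{d,m})| \leq b_p(V_\epsilon\cap B_{d,m}) + b_p(V_\epsilon\cap A_{d,m}\cap B_{d,m}) + b_{p-1}(V_\epsilon\cap A_{d,m}\cap B_{d,m}),$$
so that taking expectations reduces the problem to controlling the three terms on the right-hand side.

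Next, I would invoke Corollary~\ref{Cor_Retract} with $L = V_\epsilon$ to deformation retract $V_\epsilon\cap B_{d,m}$ onto $V_\epsilon\cap\Sd^{m+1}(\Sd^{d-m}(K)^{(n-1)})$. Setting $L' := \Sd^{d-m}(K)^{(n-1)}$, this identifies the relevant Betti number with that of the random dual subcomplex associated to the restricted cochain $\epsilon|_{\Sd^m(L')}$ on the $(n-1)$-dimensional simplicial complex $\Sd^m(L')$ in its first barycentric subdivision. Since $\mu_\nu$ is a product measure, this restriction is itself distributed according to the product measure at level $\nu$, and the $(n-1)$-dimensional version of the upper estimate from Corollary~4.2 of \cite{SW2} applies to give
$$\E_{\nu,d}\bigl(b_p(V_\epsilon\cap B_{d,m})\bigr) \leq c'(n-1)\cdot n!^{m+1}\cdot f_{n-1}(L').$$
Combining this with $f_{n-1}(\Sd^{d-m}(K)) \leq \tfrac{n+1}{2}f_n(K)(n+1)!^{d-m} + n!^{d-m}f_{n-1}(\partial K)$ and the arithmetic identity $n!^{m+1}(n+1)!^{d-m} = n!\cdot(n+1)!^d/(n+1)^m$, the main term yields the desired contribution $c(n)f_n(K)(n+1)!^d/(n+1)^m$, while the boundary contribution is absorbed into the $O(n!^d)$ remainder.

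The analogous estimate for the intersection piece $V_\epsilon\cap A_{d,m}\cap B_{d,m}$ is where I expect the main obstacle to lie. The retraction of Corollary~\ref{Cor_Retract} sends $B_{d,m}$ onto $\Sd^{m+1}(L')$, which by definition coincides with the complement of $A_{d,m}$, so the retraction does not restrict directly to $A_{d,m}\cap B_{d,m}$. The cleanest fix is either to build a companion deformation retraction of $A_{d,m}\cap B_{d,m}$ onto the ``link sphere bundle'' of $\Sd^{m+1}(L')$ inside $|K|$, which is again of $(n-1)$-dimensional type and falls under the same dimensional recursion, or to bound its Betti numbers crudely by its $f$-vector, which is also of order $n!^{m+1}f_{n-1}(L')$ by the same cell-counting argument. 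Either route contributes a term of the same magnitude, and summing the three bounds from step one gives the desired inequality after repacking the constants into $c(n)$.
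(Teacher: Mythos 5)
Your Mayer--Vietoris set-up and your treatment of the term $\E_{\nu,d}(b_p(V_\epsilon\cap B_{d,m}))$ follow the paper's route, except that where you invoke the expected upper estimate of Corollary~4.2 of \cite{SW2} on the $(n-1)$-dimensional complex $\Sd^m(L')$ (which also requires identifying the law of $V_\epsilon\cap \Sd^{m+1}(L')$ with that of the dual complex of the restricted cochain, and needs a separate trivial remark when $k=n$ or $p=n-k$), the paper simply bounds the Betti numbers of the retract deterministically by its face numbers, $b_p(V_\epsilon\cap \Sd^{m+1}(L'))\le f_p(\Sd^{m+1}(L'))\le f_*(\Sd^{d-m}(K))\,f_p(\Sd^{m+1}(\Delta_{n-1}))$, and then uses the universal convergence of $f_*(\Sd^{d-m}(\Delta_n))/(n+1)!^{d-m}$ and of $f_p(\Sd^{m+1}(\Delta_{n-1}))/n!^{m+1}$; this is lighter and avoids the distributional identification altogether.

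The genuine gap is the intersection term, exactly where you flag it. Your fallback (b), bounding $b_p(A_{d,m}\cap B_{d,m}\cap V_\epsilon)$ ``crudely by its $f$-vector'', is not legitimate as stated: $A_{d,m}\cap B_{d,m}\cap V_\epsilon$ is an open union of open simplices of $\Sd^{d+1}(K)$, not a subcomplex, and Betti numbers of such unions are not controlled by counts of the open simplices they contain (the union of the open edges of a triangulated circle has $b_0$ equal to the number of edges while containing no vertex at all). Your option (a) is the right idea but is left as a gesture, and no new ``link sphere bundle'' retraction needs to be built: the paper closes the step with the tools already proved. Namely, $A_{d,m}\cap B_{d,m}$ is the disjoint union over $\sigma\in \Sd^{d-m}(K)^{[n]}$ of $B_{d,m}\cap\stackrel{\circ}{\sigma}$, and Corollary~\ref{Cor_Retract} applied to the subcomplex $V_\epsilon\cap\sigma$ shows that $B_{d,m}\cap\stackrel{\circ}{\sigma}\cap V_\epsilon$ retracts by deformation onto $V_\epsilon\cap\partial\sigma$, so that $b_p(B_{d,m}\cap\stackrel{\circ}{\sigma}\cap V_\epsilon)\le f_p(\Sd^{m+1}(\partial\sigma))$; summing over the $f_n(K)(n+1)!^{d-m}$ simplices $\sigma$ gives the bound $f_n(K)\frac{(n+1)!^d}{(n+1)^m}\cdot\frac{f_p(\Sd^{m+1}(\partial\Delta_n))}{n!^m}$, which is of the required size since $f_p(\Sd^{m+1}(\partial\Delta_n))/n!^m$ is a bounded universal sequence, and the $(p-1)$-term is handled identically. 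Without this per-simplex decomposition and homotopy identification, your third Mayer--Vietoris term is not actually bounded, so the argument is incomplete at its decisive step.
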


\begin{proof}
It follows from Corollary~\ref{Cor_Retract} that for every $\epsilon\in C^{k-1}(\Sd^d(K); \Z/2\Z)$, $(A_{d,m}\cap V_\epsilon)\cup (B_{d,m}\cap V_\epsilon)$ is an open cover of $V_\epsilon$. The long exact sequence of Mayer-Vietoris associated to this open cover  (see \S~33 of \cite{M})  reads 
$$\ldots \to H_p(A_{d,m}\cap B_{d,m}\cap V_{\epsilon})\xrightarrow{i_p} H_p(A_{d,m}\cap V_{\epsilon})\oplus H_p(B_{d,m}\cap V_\epsilon)\xrightarrow{j_p}H_p(V_\epsilon)\xrightarrow{\partial_p} H_{p-1}(A_{d,m}\cap B_{d,m}\cap V_\epsilon)\to\ldots$$
 where the coefficients are in $\Z/2\Z$. We thus deduce that 
 
 $$\begin{array}{rcl}
b_p(V_\epsilon)&=&\dim \textup{Im}(\partial_p)+\dim  \textup{Im}(j_p)\\
&=&\dim  \textup{Im}(\partial_p)+b_p(A_{d,m}\cap V_\epsilon) +b_p(B_{d,m}\cap V_\epsilon) -\dim  \textup{Im}(i_p). 
\end{array}$$
After integration over $C^{k-1}(\Sd^d(K);\Z/2\Z)$, we get   

\begin{eqnarray}\label{Eqn_3term} |\E_{\nu,d}(b_p)-\E_{\nu,d}(A_{d,m}\cap V_\epsilon)| & \leq & \E_{\nu,d}(b_p(B_{d,m}\cap V_\epsilon)) +  \E_{\nu,d}(b_p(A_{d,m}\cap B_{d,m}\cap V_\epsilon)) \nonumber \\
&  & +  \; \E_{\nu,d}(b_{p-1}(A_{d,m}\cap B_{d,m}\cap V_\epsilon)).\end{eqnarray}

Let us now bound each term of the right hand side. We know
from Corollary~\ref{Cor_Retract} that for every  $\epsilon\in C^{k-1}(\Sd^d(K);\Z/2\Z),$ $r_{V_\epsilon}$ retracts $V_\epsilon \cap B_{d,m}$ by deformation onto $ V_\epsilon  \cap\Sd^{m+1}(\Sd^{d-m}(K)^{(n-1)}).$
Thus,

$$\begin{array}{rcl}
\E_{\nu,d}(b_p(V_\epsilon \cap B_{d,m}))&=&\E_{\nu,d}\big(b_p\big(V_\epsilon \cap \Sd^{m+1}(\Sd^{d-m}(K)^{(n-1)})\big)\big)\\
&\leq&f_p(\Sd^{m+1}(\Sd^{d-m}(K)^{(n-1)}))\\
&\leq& \sum_{i=p}^{n-1}\sum_{\sigma\in \Sd^{d-m}(K)^{[i]}}f_p(\Sd^{m+1}(\sigma))\\
&\leq& f_*(\Sd^{d-m}(K)) f_p(\Sd^{m+1}(\Delta_{n-1})),
\end{array}$$ 
where $f_*(\Sd^{d-m}(K))$ denotes the total face number of $\Sd^{d-m}(K).$

Let $\widetilde{K}$ be the subcomplex of $K$ made of simplices which are faces of $n$-simplices of $K$. Then, 

$$\begin{array}{rcl}
f_*(\Sd^{d-m}(K))&=&f_*(\Sd^{d-m}(\widetilde{K}))+f_*(\Sd^{d-m}(K\setminus \widetilde{K}))\\
&\leq&f_n(K) f_*(\Sd^{d-m}(\Delta_n))+f_*(\Sd^{d-m}(K\setminus \widetilde{K}))\\
&\leq& f_n(K) f_*(\Sd^{d-m}(\Delta_n))+O(n!^{d-m}),\\
\end{array}$$ 
from \cite{BW}, see also \cite{DPS, SW1}, since the simplices of $K\setminus \widetilde{K}$ are of dimension at most $(n-1).$ Likewise,  the sequences $\left(\frac{f_*(\Sd^{d-m}(\Delta_n))}{(n+1)!^{d-m}}\right)_{d\in\N}$ and $\left(\frac{f_p(\Sd^{m+1}(\Delta_{n-1}))}{n!^{m+1}}\right)_{m\in\N}$ are universal and convergent, see \cite{BW, DPS}. They are thus bounded by universal constants. We deduce the existence of a universal constant $c_1(n)$ such that 
$\E_{\nu,d}(b_p(B_{d,m}\cap V_\epsilon))\leq f_n(K)(n+1)!^d\frac{c_1(n)}{(n+1)^m}+O(n!^d).$

Similarly, from  Corollary~\ref{Cor_Retract} we know that for every $\epsilon\in C^{k-1}(\Sd^d(K);\Z/2\Z)$ and every $\sigma\in \Sd^{d-m}(K)^{[n]},$ $r_{V_\epsilon\cap\sigma}$ is a deformation retract of $B_{d,m}\cap\stackrel{\circ}{\sigma}\cap V_\epsilon$   onto $\partial \sigma\cap V_\epsilon.$
 So we have
 
 $$\begin{array}{rcl}
\E_{\nu,d}(b_p(A_{d,m}\cap B_{d,m} \cap V_\epsilon))&=&\sum_{\sigma\in \Sd^{d-m}(K)^{[n]}} \E_{\nu,d}(b_p(B_{d,m}\cap\stackrel{\circ}{\sigma}\cap V_\epsilon))\\
&=&\sum_{\sigma\in \Sd^{d-m}(K)^{[n]}} \E_{\nu,d}(b_p(V_\epsilon\cap\partial \sigma))\\
&\leq&\sum_{\sigma\in \Sd^{d-m}(K)^{[n]}} f_p(\Sd^{m+1}(\partial \sigma))\\
&\leq& f_n(K)\frac{(n+1)!^d}{(n+1)^m} \frac{f_p(\Sd^{m+1}(\partial \Delta_n)}{n!^m}.
\end{array}$$ 

Once again the sequence  $\left(\frac{f_p(\Sd^{m+1}(\partial\Delta_n))}{n!^{m}}\right)_{m\in \N}$ is universal and convergent and thus gets bounded by a universal constant.  The third term in (\ref{Eqn_3term}) is analogous to the second one, so it gets bounded by a universal constant as well. Hence the result.
\end{proof}

\begin{proof}[Proof of Theorem~\ref{Thm_Ed}.] 
We first prove the result for $K=\Delta_n$ and then deduce it for a general simplicial complex $K$. For every  $d\geq 0$, we set $$u_d=\frac{1}{(n+1)!^d}\int_{C^{k-1}(\Sd^d(\Delta_n);\Z/2\Z)} b_p(V_\epsilon) d\mu_\nu(\epsilon).$$
For every $\epsilon \in C^{k-1}(\Sd^d(\Delta_n);\Z/2\Z)$
and every $m\in\{0,\ldots, d\}$, $$b_p(V_\epsilon\cap A_{d,m})=\sum_{\sigma\in \Sd^{d-m}(\Delta_n)^{[n]}}b_p(V_\epsilon\cap \stackrel{\circ}{\sigma}).$$  After integration over $C^{k-1}(\Sd^d(\Delta_n);\Z/2\Z),$ we get $\E_{\nu,d}(b_p(V_\epsilon\cap A_{d,m}))=(n+1)!^{d-m}\E_{\nu,m}(b_p)$ since $\mu_\nu$ is a product measure, $\#\Sd^{d-m}(\Delta_n)^{[n]}=(n+1)!^{d-m}$ (see \cite{BW}) and  $b_p(V_\epsilon\cap \stackrel{\circ}{\sigma})=b_p(V_\epsilon\cap \sigma)$ by Corollary~\ref{Cor_Retract}.
Dividing by $(n+1)!^d$, we deduce from Proposition~\ref{Prop_Cn} the upper bound $$|u_d-u_m|\leq \frac{c(n)}{(n+1)^m}+O\Big(\frac{1}{(n+1)^d}\Big).$$
The sequence $(u_d)_{d\in \N}$ is thus a Cauchy sequence and so a converging sequence. The result  follows for $K=\Delta_n$.
Let us now suppose that $K$ is any finite simplicial complex of dimension $n$ and set, for every $d\geq 0$, $$v_d=\frac{1}{f_n(K)(n+1)!^d}\int_{C^{k-1}(\Sd^d(K);\Z/2\Z)} b_p(V_\epsilon) d\mu_\nu(\epsilon).$$ 
We deduce from Proposition~\ref{Prop_Cn}, after dividing by $f_n(K)(n+1)!^d$, that $|v_d-u_m|\leq \frac{c(n)}{(n+1)^m}+O\big(\frac{1}{(n+1)^d}\big)$. Since $(u_m)_{m\in \N}$ converges, this implies that $(v_d)_{d\in \N}$ converges as well and that $\lim_{d\to +\infty} v_d=\lim_{m\to +\infty} u_m.$ Hence the result.  
\end{proof}

\subsection{Percolation}\label{Sect_Per}
Let $n>0$ and $k\in\{1,\ldots, n\}$. For every $0\leq m\leq d$ and every $\sigma\in \Sd^{d-m}(\Delta_n)^{[n]}$, we denote by $P_{d,m}(\sigma)$ the set of $\epsilon\in C^{k-1}\big(\Sd^{m}(\Sd^{d-m}(\Delta_n)\setminus \stackrel{\circ}{\sigma})\big)$ for which there exists a path in $V_\epsilon$  connecting $\partial \Delta_n$ to $\partial \sigma$. In other words, $\epsilon \in P_{d,m}(\sigma)$ if and only if $V_\epsilon\setminus \stackrel{\circ}{\sigma}$ percolates between the boundaries of $\Delta_n$ and $\sigma.$

\begin{thm}\label{Thm_Per}
Let $\nu\in[0,1].$ If there exist $n>0$, $k\in\{1,\ldots,n\}$ and $p\in\{0,\ldots, n-k\}$ such that $e_{p,\nu}(n,k)>\tilde{e}_{p,\nu}(n,k),$ then, $n\geq 3,$ $p\geq 1$ and $\lim_{m\to +\infty}\mu_\nu(P_{m+m',m}(\sigma))=1$, for every $\sigma\in\Sd^{m'}(\Delta_n)^{[n]}, m'\in \N$.
\end{thm}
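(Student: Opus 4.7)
The strategy is to upper-bound $e_{p,\nu}(n,k)-\tilde e_{p,\nu}(n,k)$ by an exterior percolation probability through a self-similar inequality that exploits the product structure of $\mu_\nu$, and then to observe that the bound $\mu_\nu \le 1$ forces the percolation probabilities to approach $1$.

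\emph{Easy cases.} Theorem~\ref{Thm_eetilde} already gives $e_{0,\nu}=\tilde e_{0,\nu}$, which rules out $p=0$; so $p\geq 1$. For $n=1$ no admissible pair $(k,p)$ has $p\geq 1$. The remaining low-dimensional case $n=2$, $k=1$, $p=1$ I would handle by adapting the Euler-characteristic argument from the proof of Theorem~\ref{Thm_eetilde}: for a planar $1$-complex $V_\epsilon\subset\Delta_2$, the contribution of boundary-touching components to $b_1-\tilde b_1$ is controlled by the bound $\E_{\nu,d}(b_0(V_\epsilon\cap \partial\Delta_2))=O(2^d)=o(6^d)$, yielding $e_{1,\nu}(2,1)=\tilde e_{1,\nu}(2,1)$. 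This forces $n\geq 3$.

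\emph{Self-similar inequality.} Let $V_\epsilon^\partial$ denote the union of the connected components of $V_\epsilon$ meeting $\partial\Delta_n$, so that $b_p(V_\epsilon)-\tilde b_p(V_\epsilon)=b_p(V_\epsilon^\partial)$. Fix $m'\in\N$, $m\geq 0$ and set $d=m+m'$. The geometric key point is that, for each $\sigma\in\Sd^{m'}(\Delta_n)^{[n]}$, every connected component of $V_\epsilon\cap\sigma$ that is contained in a globally touching component of $V_\epsilon$ must itself meet $\partial\sigma$: otherwise the path in $V_\epsilon$ leaving $\sigma$ from that component would produce, within $V_\epsilon\cap\sigma$, a connection to a point of $\partial\sigma$, contradicting the component being interior. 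Denoting by $(V_\epsilon\cap\sigma)^\partial$ the union of components of $V_\epsilon\cap\sigma$ meeting $\partial\sigma$, one thus has $V_\epsilon^\partial\cap\sigma\subset (V_\epsilon\cap\sigma)^\partial$ component-wise, so $b_p(V_\epsilon^\partial\cap\sigma)\leq b_p((V_\epsilon\cap\sigma)^\partial)$; and this vanishes off the event $P_{d,m}(\sigma)$. Combining this with a Mayer-Vietoris bound in the spirit of Proposition~\ref{Prop_Cn}, $b_p(V_\epsilon^\partial)\leq \sum_\sigma b_p(V_\epsilon^\partial\cap\sigma)+O(n!^d)$; the symmetry identification $\E(b_p((V_\epsilon\cap\sigma)^\partial))=\E_{\nu,m}(b_p-\tilde b_p)$; and the conditional independence (given $\epsilon|_{\partial\sigma}$) of the exterior event $P_{d,m}(\sigma)$ and the interior Betti contribution, whose overlap $\partial\sigma$ carries only $O(n!^m)$ simplices, yields
\[
\E_{\nu,d}(b_p-\tilde b_p)\leq \Bigl(\sum_{\sigma\in \Sd^{m'}(\Delta_n)^{[n]}}\mu_\nu(P_{d,m}(\sigma))\Bigr)\,\E_{\nu,m}(b_p-\tilde b_p)+O(n!^d).
\]

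\emph{Passage to the limit.} Dividing by $(n+1)!^d=(n+1)!^{m'}(n+1)!^m$ and letting $d\to+\infty$ with $d-m=m'$ fixed, Theorems~\ref{Thm_Ed} and~\ref{Thm_btilde} give, for $L:=e_{p,\nu}(n,k)-\tilde e_{p,\nu}(n,k)$,
\[
L \leq L\cdot \limsup_{m\to+\infty}\frac{1}{(n+1)!^{m'}}\sum_{\sigma\in \Sd^{m'}(\Delta_n)^{[n]}}\mu_\nu(P_{m+m',m}(\sigma)).
\]
Under the hypothesis $L>0$, this forces the lim\,sup of the average of the $(n+1)!^{m'}$ percolation probabilities to equal $1$; being a finite average of quantities in $[0,1]$, each term must then approach $1$, and combined with a monotonicity argument showing that $m\mapsto\mu_\nu(P_{m+m',m}(\sigma))$ is non-decreasing (refining the exterior subdivision only creates more potential percolation paths, as can be seen by coupling), one obtains $\lim_{m\to+\infty}\mu_\nu(P_{m+m',m}(\sigma))=1$ for every $\sigma$.

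\emph{Main obstacle.} The technical core is the geometric inclusion $V_\epsilon^\partial\cap\sigma\subset (V_\epsilon\cap\sigma)^\partial$, which is what upgrades the naive bound into a genuine self-similar inequality with $\E_{\nu,m}(b_p-\tilde b_p)$ on the right (rather than the much larger $\E_{\nu,m}(b_p)$). The second delicate point is the monotonicity step needed to turn the resulting lim\,sup statement into an actual limit for each fixed $\sigma$; without it one only obtains the conclusion along subsequences.
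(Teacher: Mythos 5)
Your overall architecture (localize $b_p-\tilde b_p$ to the $n$-simplices of $\Sd^{m'}(\Delta_n)$, bound each local contribution by an exterior percolation indicator times an interior copy of $b_p-\tilde b_p$, and let the bound $\mu_\nu\leq1$ force the probabilities to $1$) is the same as the paper's, and your inclusion $V_\epsilon^\partial\cap\sigma\subset(V_\epsilon\cap\sigma)^\partial$ is exactly the paper's key geometric observation. But there is a genuine gap at the factorization step. The event $P_{d,m}(\sigma)$ is defined by the cochain on the \emph{closed} complement of $\stackrel{\circ}{\sigma}$, so its indicator depends on $\epsilon|_{\Sd^m(\partial\sigma)}$, and so does the interior contribution $(b_p-\tilde b_p)(V_\epsilon\cap\sigma)$: the two factors share the boundary data. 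Conditional independence given $\epsilon|_{\partial\sigma}$ only gives $\E[\mathbf{1}_{P}\,X_\sigma]=\E\big[\mu_\nu(P\mid\epsilon|_{\partial\sigma})\,\E[X_\sigma\mid\epsilon|_{\partial\sigma}]\big]$, and nothing forces this to be at most $\mu_\nu(P)\,\E_{\nu,m}(b_p-\tilde b_p)$; if anything one expects positive correlation through the shared boundary values, i.e. the inequality in the wrong direction, and the fact that $\partial\sigma$ carries only $O(n!^m)$ simplices does not control this correlation. The factorization that the product measure does justify is obtained by replacing $P_{d,m}(\sigma)$ with the \emph{projected} event $\widetilde P_{d,m}(\sigma)$ (cochains on the simplices not contained in $\sigma$ that admit a percolating extension), which is what the paper does; but since $\mu_\nu(\widetilde P)\geq\mu_\nu(P)$, the resulting conclusion $\mu_\nu(\widetilde P_{m+m',m}(\sigma))\to1$ does not yield the statement about $P$. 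The paper closes precisely this gap with an extra geometric step you are missing: it applies the $\widetilde P$-limit to a smaller simplex $\tilde\sigma\in\Sd^2(\sigma)$ disjoint from $\partial\sigma$ and uses that any path in $V_\epsilon$ from $\partial\Delta_n$ to $\partial\tilde\sigma$ must cross $\partial\sigma$, giving $\mu_\nu(\widetilde P_{m+m'+2,m}(\tilde\sigma))\leq\mu_\nu(P_{m+m'+2,m+2}(\sigma))$ and hence the limit for $P$.

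Two secondary points. First, your monotonicity claim for $m\mapsto\mu_\nu(P_{m+m',m}(\sigma))$ is unsubstantiated: the random cochain on $\Sd^{m+1}$ is not naturally coupled to the one on $\Sd^{m}$, so ``refining only creates more paths'' has no obvious meaning; it is also unnecessary, because the self-similar inequality holds for every $m$ and both $\E_{\nu,m+m'}(b_p-\tilde b_p)/(n+1)!^{m+m'}$ and $\E_{\nu,m}(b_p-\tilde b_p)/(n+1)!^{m}$ genuinely converge to $L>0$ (Theorems~\ref{Thm_Ed} and~\ref{Thm_btilde}), so taking $\liminf$ rather than $\limsup$ shows the finite average itself tends to $1$, whence each term does. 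Second, in the case $n=2$, $k=p=1$, the bound $b_1-\tilde b_1\leq b_0(V_\epsilon\cap\partial\Delta_2)$ fails for a general planar $1$-complex (a wedge of circles touching the boundary at one point); you need, as the paper does, that $V_\epsilon$ is a one-dimensional homological manifold (Theorem~1.1 of \cite{SW2}), after which the boundary-touching components are arcs and $b_1=\tilde b_1$ exactly, with no error term needed.
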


\begin{proof}
If $p=0$,  $e_{p,\nu}(n,k)=\tilde{e}_{p,\nu}(n,k)$ for every $k\in\{1,\ldots, n\}$ and  $n>0$, by Theorem~\ref{Thm_eetilde}. If $n=2$, we need to prove the equality for $k=p=1.$ 
In this case, for every $\epsilon\in C^0(\Sd^d(\Delta_2);\Z/2\Z),$ $V_\epsilon$ is a homological manifold
of dimension one by Theorem~1.1 of \cite{SW2}. The connected components of $V_\epsilon$  which meet $\partial \Delta_2$ are thus homeomorphic to $\Delta_1$ so that $b_1(V_\epsilon)=\tilde{b}_1(V_\epsilon)$. The equality $e_1(2,1)=\tilde{e}_1(2,1)$ is  then obtained by integrating over $C^0(\Sd^d(\Delta_2);\Z/2\Z).$

Let us suppose now that there exists $n>0$, $k\in \{1,\ldots, n\}$ and $p\in\{0,\ldots, n-k\}$ such that $e_{p,\nu}(n,k)>\tilde{e}_{p,\nu}(n,k)$. Let $m'\in \N$ and $\sigma\in \Sd^{m'}(\Delta_n)^{[n]}.$ For every $m\geq1$, we denote by $\widetilde{P}_{m'+m}(\sigma)$ the set of $\tilde{\epsilon}\in C^{k-1}(\Sd^{m'+m}(\Delta_n)\setminus \Sd^m(\sigma))$ for which there exists $\epsilon \in P_{m'+m}(\sigma)$ whose restriction to the complement of $\sigma$ is $\tilde{\epsilon}$ ($\Sd^{m+m'}(\Delta_n)\setminus \Sd^m(\sigma)$ is not a simplicial complex. It is enough to show that $\lim_{m\to +\infty}\mu_\nu(\widetilde{P}_{m+m',m}(\sigma))=1.$
Indeed, let $\tilde{\sigma}\in \Sd^{2}(\sigma)$ which  does not meet the boundary of $\sigma$.
Then, for every $m\geq1$ and $\tilde{\epsilon}\in \widetilde{P}_{m+m'+2,m}(\tilde{\sigma})$, the restriction of $\tilde{\epsilon}$ to $\Sd^{m+2}(\Sd^{m'}(\Delta_n)\setminus \stackrel{\circ}{\sigma})$ lies in $P_{m+m'+2,m+2}(\sigma),$ since a path in $V_\epsilon$ connecting $\partial \Delta_n$ to $\partial \tilde{\sigma}$ has to cross $\partial \sigma$. Let $\widehat{P}_{m+m'+2,m+2}(\tilde{\sigma})$ be the image of $\widetilde{P}_{m+m'+2,m+2}(\tilde{\sigma})$ in $P_{m+m'+2,m+2}(\sigma)$ by this restriction map, so that $\mu_\nu(\widehat{P}_{m+m'+2,m+2}(\tilde{\sigma}))\leq \mu_\nu({P}_{m+m'+2,m+2}(\sigma))$. Since $\mu_\nu$ is a product measure,  $\mu_\nu(\widehat{P}_{m+m'+2,m+2}(\tilde{\sigma}))\geq \mu_\nu(\widetilde{P}_{m+m'+2,m+2}(\tilde{\sigma}))$ and we deduce  the inequality $\mu_\nu(\widetilde{P}_{m+m'+2,m+2}(\tilde{\sigma}))\leq \mu_\nu({P}_{m+m'+2,m+2}({\sigma}))$. Thus, the fact that $\lim_{m\to +\infty}\mu_\nu(\widetilde{P}_{m+m'+2,m+2}(\tilde{\sigma}))=1$ implies that $\lim_{m\to +\infty}\mu_\nu({P}_{m+m'+2,m+2}({\sigma}))=1$.

Let us now prove that $\lim_{m\to +\infty}\mu_\nu(\widetilde{P}_{m+m',m}({\sigma}))=1$, a result which does not depend on the choices of $m'$ and $\sigma$. For every $\epsilon \in C^{k-1}(\Sd^{m'+m}(\Delta_n)),$ let $\Lambda_\epsilon$
be the union of the connected components of $V_\epsilon$ which meet $\partial \Delta_n$, so that $b_p(V_\epsilon)=\tilde{b}_p(V_\epsilon)+b_p(\Lambda_\epsilon)$. Replacing  $V_\epsilon$ by $\Lambda_\epsilon$ in Proposition~\ref{Prop_Cn}, we deduce that

$$\lim_{m\to +\infty}\frac{1}{(n+1)!^{m'+m}}|\E_{m'+m}(b_p(\Lambda_\epsilon))-\E_{m'+m}(b_p(\Lambda_\epsilon\cap A_{m+m',m}))|=0.$$

Under our hypothesis we then deduce that $$\lim_{m\to +\infty}\frac{1}{(n+1)!^{m'+m}}\E_{m'+m}(b_p(\Lambda_\epsilon\cap A_{m+m',m}))= e_{p,\nu}(n,k)-\tilde{e}_{p,\nu}(n,k)>0.$$
However, by definition,

 $$\begin{array}{rcl}
 \E_{m+m'}(b_p(\Lambda_\epsilon\cap A_{m+m',m}))&=&\sum_{\sigma\in \Sd^{m'}(\Delta_n)^{[n]}}\int_{C^{k-1}(\Sd^{m+m'}(\Delta_n)\setminus \Sd^m(\sigma))}\int_{C^{k-1}(\Sd^m(\sigma))} b_p(\Lambda_\epsilon\cap \stackrel{\circ}{\sigma})d\mu_\nu(\epsilon)\\
 &=&\sum_{\sigma\in \Sd^{m'}(\Delta_n)^{[n]}}\int_{\widetilde{P}_{m+m',m}(\sigma)}\int_{C^{k-1}(\Sd^m(\sigma))} b_p(\Lambda_\epsilon\cap \stackrel{\circ}{\sigma})d\mu_\nu(\epsilon)\\
&\leq & \sum_{\sigma\in \Sd^{m'}(\Delta_n)^{[n]}}\mu_\nu({\widetilde{P}_{m+m',m}(\sigma)})\E_{\nu,m}(b_p-\tilde{b}_p),
\end{array}$$
so that 

$$\frac{1}{(n+1)!^{m'}} \sum_{\sigma\in \Sd^{m'}(\Delta_n)^{[n]}}\mu_\nu({\widetilde{P}_{m+m',m}(\sigma)})\geq \frac{\E_{\nu,m'+m}(b_p(\Lambda_\epsilon\cap A_{m+m',m}))}{(n+1)!^{m'}\E_{\nu, m}(b_p-\tilde{b}_p)}.$$

From what precedes, the right hand side converges to 1 as $m$ grows to $+\infty.$ We thus deduce that  for every $m'\in \N$ and every $\sigma\in \Sd^{m'}(\Delta_n)^{[n]},$ $\lim_{m\to+\infty}\mu_\nu(\widetilde{P}_{m+m',m}(\sigma))=1.$ Hence the result.
\end{proof}

\begin{rem}
\begin{enumerate}
\item It would be interesting to prove that for every $m'\in\N$, the sequence  $\frac{1}{(n+1)!^{m'}}\sum_{\sigma\in \Sd^{m'}(\Delta_n)^{[n]}}\mu_\nu(\widetilde{P}_{m+m',m}(\sigma))$
which appeared at the end of the proof of Theorem~\ref{Thm_Per} converges to a limit  $u_{m'}$ as $m$ grows to $+\infty$,  without assuming that $e_{p,\nu}(n,k)>\tilde{e}_{p,\nu}(n,k).$  Then, the sequence $(u_{m'})_{m'\in \N}$ would be submultiplicative. 

\item It would also be interesting to know whether the sequence $\big(\mu_\nu(P_{m+m',m}(\sigma))\big)_{m\in \N}$ converges or not for every $m'$ and every  $\sigma\in \Sd^{m'}(\Delta_n)^{[n]},$ and whether or not the limit is 1 without assuming that $e_{p,\nu}(n,k)>\tilde{e}_{p,\nu}(n,k)$.  Nevertheless, if this condition is necessary by Theorem~\ref{Thm_Per} for $e_{p,\nu}(n,k)$ to be greater than $\tilde{e}_{p,\nu}(n,k)$, it is not sufficient, a priori.
This question of percolation seems already of interest in dimension two.

\end{enumerate}
\end{rem}

\section{Refined upper bounds}
\subsection{Induced filtrations}
Let $K$ be a finite $n$-dimensional simplicial complex. For every $\epsilon\in C^0(K;\Z/2\Z)$, $K$ inherits the filtration $$\emptyset \subset K^\epsilon_0\subset K^\epsilon_1\subset\ldots \subset K^\epsilon_{[\frac{n+1}{2}]}=K,$$ where $K^\epsilon_i=\{\sigma \in K| \# (\epsilon |_\sigma)^{-1}(0)\leq i \mbox{ or } \# (\epsilon_{|_\sigma})^{-1}(1)\leq i \}$,  $i\in\{0,1,\ldots,[\frac{n+1}{2}] \}$.

We denote by $C_*(K,K^\epsilon_0;\Z/2\Z)$ the associated relative simplicial chain  complex and by ${C}_*(V'_\epsilon;\Z/2\Z)$
the cellular chain complex of the CW-complex $V'_\epsilon$, see Corollary~{2.5} of \cite{SW2}.
We now prove Theorem~\ref{Thm_canisom} and Corollary~\ref{Cor_canisom}.

\begin{proof}[Proof of Theorem~\ref{Thm_canisom}.]
For every $p\in\{0,\ldots, n-k\}$, the relative chain complex $C_p(K,K_0^{\epsilon};\Z/2\Z)$  contains a canonical basis given by simplices on which $\epsilon$ is not constant.
Similarly, a basis of   ${C}_p(V'_\epsilon;\Z/2\Z)$ is given by a $p$-cell for every $(p+1)$-simplex of $K$ on which $\epsilon$ is not constant.
Hence the canonical isomorphism between $C_*(K,K_0^{\epsilon};\Z/2\Z)$ and ${C}_*(V'_\epsilon;\Z/2\Z)^{[1]}$.  
Moreover, the boundary operator  of ${C}_*(V'_\epsilon;\Z/2\Z)^{[1]}$ gets identified to the composition $pr_\epsilon\circ \partial$, where $\partial$ is the boundary operator of $C_*(K;\Z/2\Z)$ and $pr_\epsilon$ is the projection on $C_*(K,K_0^\epsilon;\Z/2\Z)$.
\end{proof}

Theorem~\ref{Thm_canisom} gives new insights on the following corollary which has been established in \cite{SW2}, see  also \cite{SW1}. Recall that for every finite simplicial complex of dimension $n$, $q_K(T)$ denotes the face polynomial $\sum_{p=0}^{n}f_p(K)T^p.$

\begin{cor}[Theorem~1.5 of \cite{SW2}]\label{Cor_q1/2}
For every  finite $n$-dimensional simplicial complex $K$ and every $\nu\in[0,1],$
$\chi(K)+\E_\nu(K)=\nu q_K(-\nu)+(1-\nu)q_K(\nu-1)$.
\end{cor}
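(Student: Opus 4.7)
The plan is to deduce this Euler-characteristic identity as an immediate consequence of Theorem~\ref{Thm_canisom}, avoiding any direct combinatorial manipulation of $V_\epsilon$ itself. Taking the alternating sum of dimensions on both sides of the canonical isomorphism $C_*(K,K_0^{\epsilon};\Z/2\Z)\cong C_*(V'_\epsilon;\Z/2\Z)^{[1]}$, and using that a degree-shift $[1]$ reverses the sign of the Euler characteristic of a chain complex, I would obtain the pointwise identity
$$\chi(K)-\chi(K_0^\epsilon)=\chi\bigl(C_*(K,K_0^\epsilon;\Z/2\Z)\bigr)=-\chi(V'_\epsilon)$$
for every $\epsilon\in C^0(K;\Z/2\Z)$. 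Together with the homeomorphism $V_\epsilon\simeq V'_\epsilon$ recalled in \S~\ref{Sect_Intro1} (Proposition~2.2 of \cite{SW2}), this gives $\chi(V_\epsilon)=\chi(K_0^\epsilon)-\chi(K)$ before any averaging.

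The second step is to evaluate $\E_\nu(\chi(K_0^\epsilon))$ combinatorially. By linearity of expectation, it suffices to compute, for each $p$-simplex $\sigma\in K$, the probability that $\sigma\in K_0^\epsilon$, that is, that $\epsilon$ is constant on the $p+1$ vertices of $\sigma$. Since $\mu_\nu$ is a product measure, this probability is exactly $\nu^{p+1}+(1-\nu)^{p+1}$, so
$$\E_\nu(\chi(K_0^\epsilon))=\sum_{p=0}^{n}(-1)^p\bigl(\nu^{p+1}+(1-\nu)^{p+1}\bigr)f_p(K).$$

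Splitting this sum into the $\nu^{p+1}$ and $(1-\nu)^{p+1}$ pieces and factoring, I would recognize
$$\sum_{p=0}^{n}(-1)^p\nu^{p+1}f_p(K)=\nu\,q_K(-\nu),\qquad \sum_{p=0}^{n}(-1)^p(1-\nu)^{p+1}f_p(K)=(1-\nu)\,q_K(\nu-1),$$
so that $\E_\nu(\chi(K_0^\epsilon))=\nu\,q_K(-\nu)+(1-\nu)\,q_K(\nu-1)$. Taking expectations of the identity from the first step then gives $\E_\nu(\chi(V_\epsilon))+\chi(K)=\E_\nu(\chi(K_0^\epsilon))$, which is the claim (under the shorthand $\E_\nu(K)=\E_\nu(\chi(V_\epsilon))$).

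No genuine obstacle arises; the proof is essentially algebraic bookkeeping. The only point requiring care is the sign from the shift in Theorem~\ref{Thm_canisom}, which must be consistent with the convention that a $p$-cell of $V'_\epsilon$ corresponds to a $(p+1)$-simplex of $K$ on which $\epsilon$ is not constant. The virtue of this approach, as announced in the text preceding the corollary, is that it gives a conceptual derivation of the previously-known identity directly from the chain-level isomorphism, rather than from a face-by-face computation.
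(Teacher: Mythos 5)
Your proposal is correct and follows essentially the same route as the paper: the pointwise identity $\chi(K_0^\epsilon)-\chi(V_\epsilon)=\chi(K)$ is extracted from Theorem~\ref{Thm_canisom} (the paper phrases this via the short exact sequence $0\to C(K_0^\epsilon)\to C(K)\to C(K,K_0^\epsilon)\to 0$, which is the same additivity of Euler characteristics you use directly), and then $\E_\nu(\chi(K_0^\epsilon))$ is computed simplex by simplex with probability $\nu^{p+1}+(1-\nu)^{p+1}$, giving $\nu q_K(-\nu)+(1-\nu)q_K(\nu-1)$. Your handling of the shift sign and of the identification $V_\epsilon\simeq V'_\epsilon$ is consistent with the paper's argument.
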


\begin{proof}
For every $\epsilon \in C^0(K;\Z/2\Z)$, the short exact sequence $$0\to C(K_0^\epsilon)\xrightarrow{i}C(K)\xrightarrow{\pi}C(K,K_0^\epsilon)\to 0.$$
combined with Theorem~\ref{Thm_canisom} implies $\chi(K^\epsilon_0)-\chi(V_\epsilon)=\chi(K).$ 
After integration over all $\epsilon$, we get
$$\begin{array}{rcl}\chi(K)+\E(\chi)&=&\E(\chi(K_0^\epsilon))=\sum_{\sigma\in K}(-1)^{\dim \sigma}\mu\{ \epsilon\in C^0(K;\Z/2\Z)| \epsilon_{|_\sigma} =\mbox{constant}\}\\
&=&\sum_{p=0}^n(-1)(\nu^{p+1}+(1-\nu)^{p+1})f_p(K).\end{array}$$
\end{proof}

\begin{proof}[Proof of Corollary~\ref{Cor_canisom}.]
The short exact sequence $0\to C(K_0^\epsilon)\xrightarrow{i}C(K)\xrightarrow{\pi}C(K,K_0^\epsilon)\to 0$
induces  the long exact sequence in homology
$$\ldots \to H_{p+1}(K)\xrightarrow{{(\pi_{p+1}})_*}H_{p+1}(K, K_0^\epsilon)\xrightarrow{{(\partial_{p+1}})_*}H_p(K_0^\epsilon)\xrightarrow{({i_{p}})_*}H_p(K)\to \ldots.$$
We deduce,
$$\begin{array}{rcl}
b_{p+1}(K,K_0^\epsilon)&=&\dim \ker {(\partial_{p+1}})_*+\dim \textup{Im} {(\partial_{p+1}})_* \\
&=&\dim \textup{Im}{(\pi_{p+1}})_*+\dim (\ker {i_p})_*\\
&=&b_{p+1}(K)+b_{p}(K_0^\epsilon)-\dim \textup{Im} ({i_{p+1}})_*-\dim \textup{Im} ({i_{p}})_*\\
&\leq& b_{p+1}(K)+b_{p}(K_0^\epsilon).
\end{array}$$

The upper estimates follow using the isomorphism  $H_{p+1}(K,K_0^\epsilon)\to \tilde{H}_p(V_\epsilon)$ given by Theorem~\ref{Thm_canisom}. We furthermore deduce

$$\begin{array}{rcl}
b_{p+1}(K,K_0^\epsilon)&=&\dim \ker {(\partial_{p+1}})_*+ b_p(K_0^\epsilon)-\dim \textup{Im} ({i_{p}})_* \\
&=&\dim \ker ({\partial_{p+1}})_*+b_{p}(K_0^\epsilon)-b_p({K})+ \dim \textup{Im} ({\pi_{p}})_*\\
&\geq& b_{p}(K_0^\epsilon)-b_p(K).
\end{array}$$

The isomorphism  given by Theorem~\ref{Thm_canisom} now implies the lower estimate after integration over $C^0(K;\Z/2\Z).$ Finally, the asymptotic result follows from the definition of $e_{p,\nu}(n,1)$ and the invariance of the Betti numbers of $K$ under barycentric subdivisions. 
\end{proof}

Thanks to Corollary~\ref{Cor_canisom},  in order to estimate $\E_\nu(b_p)$ from above,  it suffices to upper estimate $\E_\nu(b_p(K_0^\epsilon)).$ A rough upper estimate is given by Proposition~\ref{Prop_bp<fp}, but it is going to be improved by Theorem~\ref{Thm_UpB}
and improved further by Corollary~\ref{Cor_Mp}.

\begin{prop}\label{Prop_bp<fp}
Let $K$ be a finite $n$-dimensional simplicial complex, $n>0$. For every $\nu\in[0,1]$ and every $p\in \{0,\ldots,n-1\}$,  $\E_\nu(b_p(K_0^\epsilon))\leq {f_p(K)}(\nu^{p+1}+(1-\nu)^{p+1})=\E_\nu(f_p(K_0^\epsilon)).$
\end{prop}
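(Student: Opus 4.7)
The plan is to observe that Proposition~\ref{Prop_bp<fp} is essentially an immediate consequence of the standard bound on Betti numbers by face numbers together with linearity of expectation; no delicate argument is needed.

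First, I would record the pointwise bound: for any finite simplicial complex $L$ and any $p\geq 0$,
$$b_p(L)=\dim_{\Z/2\Z} H_p(L;\Z/2\Z)\leq \dim_{\Z/2\Z} C_p(L;\Z/2\Z)=f_p(L),$$
since $H_p$ is a subquotient of $C_p$. Applying this with $L=K_0^\epsilon$ gives, for every $\epsilon\in C^0(K;\Z/2\Z)$, the pointwise inequality $b_p(K_0^\epsilon)\leq f_p(K_0^\epsilon)$. Integrating against $\mu_\nu$ over $C^0(K;\Z/2\Z)$ yields
$$\E_\nu(b_p(K_0^\epsilon))\leq \E_\nu(f_p(K_0^\epsilon)).$$

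Second, I would compute the right-hand side. By definition, a $p$-simplex $\sigma$ of $K$ belongs to $K_0^\epsilon$ if and only if $\epsilon$ is constant on the $p+1$ vertices of $\sigma$. Hence
$$f_p(K_0^\epsilon)=\sum_{\sigma\in K^{[p]}}\mathbf{1}_{\{\epsilon|_\sigma\;\text{constant}\}}.$$
Since $\mu_\nu$ is the product measure under which each vertex is independently assigned the value $0$ with probability $\nu$ and $1$ with probability $1-\nu$, the event that $\epsilon$ is constant on the $p+1$ vertices of $\sigma$ has probability $\nu^{p+1}+(1-\nu)^{p+1}$. Linearity of expectation then gives
$$\E_\nu(f_p(K_0^\epsilon))=f_p(K)\bigl(\nu^{p+1}+(1-\nu)^{p+1}\bigr),$$
which together with the previous inequality establishes both assertions of the proposition.

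There is no real obstacle here: the argument is two lines of probability together with the trivial bound $\dim H_p\leq \dim C_p$. The only point worth emphasising in the writeup is that the equality on the right is genuinely an equality, not merely an upper bound, so the statement is sharp at the level of face counts and the slack in the proposition comes entirely from the passage from $f_p$ to $b_p$. This slack is what the subsequent results (Theorem~\ref{Thm_UpB} and Corollary~\ref{Cor_Mp}) will exploit to obtain strictly better estimates.
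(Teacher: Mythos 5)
Your proof is correct and is essentially identical to the paper's: both use the pointwise bound $b_p(K_0^\epsilon)\leq f_p(K_0^\epsilon)$ and then compute $\E_\nu(f_p(K_0^\epsilon))$ by summing over $p$-simplices the probability $\nu^{p+1}+(1-\nu)^{p+1}$ that $\epsilon$ is constant on the vertices. Nothing is missing.
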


\begin{proof}
The result follows from the fact that  $b_p(K_0^\epsilon)\leq f_p(K_0^\epsilon)$ for every $\epsilon\in C^0(K;\Z/2\Z)$, combined with
$$\begin{array}{rcl}
\E(f_p(K_0^\epsilon))&=&\int_{C^0(K;\Z/2\Z)} \#\{\sigma\in K^{[p]}|\, \epsilon_{|_\sigma}  \textup{is constant}\}d\mu(\epsilon)\\
&=&\sum_{\sigma K^{[p]}}\mu\{\epsilon \in C^0(K;\Z/2\Z)| \epsilon_{|_\sigma}  \textup{is constant} \}\\
&=& {f_p(K)}(\nu^{p+1}+(1-\nu)^{p+1}).
\end{array}$$
\end{proof}

\begin{lem}\label{Lem_Sp}
Let $K$ be a finite $n$-dimensional simplicial complex, $n\geq0$.
For every $p\in \{0,\ldots, n\}$, the space of $p$-cycles $Z_p(K) \subset C_p(K)$ has a complement spanned by simplices of $K.$
\end{lem}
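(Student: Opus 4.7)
The plan is a short linear-algebra argument. Write $\partial_p\colon C_p(K;\Z/2\Z)\to C_{p-1}(K;\Z/2\Z)$ for the simplicial boundary operator, so that $Z_p(K)=\ker\partial_p$ and $B_{p-1}(K)=\image\partial_p$. We are looking for a subset $S\subset K^{[p]}$ such that $C_p(K;\Z/2\Z)=Z_p(K)\oplus \mathrm{span}_{\Z/2\Z}(S)$, and the natural way to produce it is to require that $\{\partial_p\sigma\mid \sigma\in S\}$ is a basis of $B_{p-1}(K)$.

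First I would dispose of the trivial case $p=0$: here $\partial_0=0$, so $Z_0(K)=C_0(K)$ and $S=\emptyset$ works. For $p\geq 1$, I would argue as follows. The set $\{\partial_p\sigma\mid \sigma\in K^{[p]}\}$ spans $B_{p-1}(K)$ since $K^{[p]}$ is a basis of $C_p(K;\Z/2\Z)$ and $\partial_p$ is linear. Any spanning family of a finite-dimensional vector space contains a basis indexed by a subfamily of indices, so there exists $S\subset K^{[p]}$ with $\{\partial_p\sigma\mid \sigma\in S\}$ a basis of $B_{p-1}(K)$.

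Finally I would check that $\mathrm{span}_{\Z/2\Z}(S)$ is the required complement. The restriction $\partial_p|_{\mathrm{span}(S)}$ is surjective onto $B_{p-1}(K)$ by construction, and injective since the images of the basis $S$ are linearly independent; hence $\mathrm{span}(S)\cap Z_p(K)=\mathrm{span}(S)\cap \ker\partial_p=\{0\}$. A dimension count using $\dim C_p(K;\Z/2\Z)=\dim Z_p(K)+\dim B_{p-1}(K)=\dim Z_p(K)+|S|$ then gives $C_p(K;\Z/2\Z)=Z_p(K)\oplus\mathrm{span}_{\Z/2\Z}(S)$, as wanted.

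There is no genuine obstacle here; the only thing to be careful about is that $S$ must be made of honest simplices rather than arbitrary chains, which is exactly what the "spanning set contains a basis" principle delivers when applied to the family indexed by the simplicial basis $K^{[p]}$.
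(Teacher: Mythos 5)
Your argument is correct: extracting a subset $S\subset K^{[p]}$ whose boundaries $\{\partial_p\sigma\mid\sigma\in S\}$ form a basis of $B_{p-1}(K)$ does give $\mathrm{span}_{\Z/2\Z}(S)\cap Z_p(K)=\{0\}$, and the dimension count via rank--nullity closes the direct sum, with the case $p=0$ handled trivially. The route is, however, not quite the one the paper takes. The paper never mentions $\partial_p$ or $B_{p-1}(K)$: it argues by induction on the codimension of $Z_p(K)$ in $C_p(K)$, greedily choosing a simplex $\sigma\notin Z_p(K)$, passing to the quotient $C_p(K)/\langle\sigma\rangle$, and repeating; implicitly this builds a set of simplices whose classes form a basis of $C_p(K)/Z_p(K)$. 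That argument is slightly more general, since it works verbatim for any subspace of a vector space equipped with a distinguished basis, not only for the kernel of a boundary operator. Your version is tied to $Z_p(K)=\ker\partial_p$ but is a one-step application of the standard ``a spanning family contains a basis'' principle, applied to the family $\{\partial_p\sigma\mid\sigma\in K^{[p]}\}$ spanning $B_{p-1}(K)$; it also produces a complement with the explicit extra feature that $\partial_p$ maps it isomorphically onto $B_{p-1}(K)$, which is exactly the kind of transversality to $\ker\partial_p$ that the paper exploits later (in the proofs of Theorems~\ref{Thm_UpB} and~\ref{Thm_Mp}), so nothing is lost for the subsequent applications. Both proofs are elementary and of comparable length; the difference is one of packaging (direct quotient induction versus the isomorphism $C_p(K)/Z_p(K)\cong B_{p-1}(K)$).
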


\begin{proof} We proceed by induction on the codimension of $Z_p(K)$. If $Z_p(K)$ contains all the simplices, there is nothing to prove.  Otherwise, we choose a simplex $\sigma\notin Z_p(K)$, so that $\langle\sigma\rangle\cap Z_p(K)=\{0\}.$ Then we consider the quotient $C_p(K)/\langle\sigma\rangle$ and the image of $Z_p(K)$ in this quotient. Its codimension decreases by one. The result follows by induction.
\end{proof}

The upper estimate given in Propositon~\ref{Prop_bp<fp} can now be improved, see \S~\ref{Sect_Intro1} for the definition of  $M_{p,\nu}(K)$. 
\begin{thm}\label{Thm_UpB}
Let $\nu\in[0,1]$ and $p\in\N$. Then, for every finite simplicial complex $K$,  $\E_\nu(b_p(K_0^\epsilon))\leq M_{p,\nu}(K).$
\end{thm}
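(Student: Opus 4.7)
The plan is to use Lemma~\ref{Lem_Sp} in degrees $p$ and $p+1$ to choose vector-space complements $S_p,S_{p+1}$ of $Z_p(K)\subset C_p(K)$ and $Z_{p+1}(K)\subset C_{p+1}(K)$ that are each spanned by a set of simplices $\Sigma_p\subset K^{[p]}$ and $\Xi_{p+1}\subset K^{[p+1]}$. Since $Z_*(K)=\ker\partial$, the restrictions $\partial|_{S_p}\colon S_p\to B_{p-1}(K)$ and $\partial|_{S_{p+1}}\colon S_{p+1}\to B_p(K)$ are isomorphisms, so $|\Sigma_p|=\dim B_{p-1}(K)$ and $|\Xi_{p+1}|=\dim B_p(K)$.

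The pointwise identity I would exploit is $b_p(K_0^\epsilon)=f_p(K_0^\epsilon)-\dim B_{p-1}(K_0^\epsilon)-\dim B_p(K_0^\epsilon)$, which reduces the problem to \emph{lower}-bounding the two boundary-space dimensions of the random subcomplex. For this, observe that $C_p(K_0^\epsilon)\cap S_p$ is spanned precisely by $\Sigma_p\cap K_0^\epsilon$ and is sent injectively into $B_{p-1}(K_0^\epsilon)$ by $\partial$, giving $\dim B_{p-1}(K_0^\epsilon)\geq |\Sigma_p\cap K_0^\epsilon|$; the same reasoning yields $\dim B_p(K_0^\epsilon)\geq |\Xi_{p+1}\cap K_0^\epsilon|$. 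Since a $j$-simplex of $K$ lies in $K_0^\epsilon$ iff $\epsilon$ is constant on its $j+1$ vertices, i.e.\ with $\mu_\nu$-probability $\nu^{j+1}+(1-\nu)^{j+1}$, taking expectations gives
\[
\E_\nu(b_p(K_0^\epsilon)) \leq (\nu^{p+1}+(1-\nu)^{p+1})\bigl(f_p(K)-\dim B_{p-1}(K)\bigr) - (\nu^{p+2}+(1-\nu)^{p+2})\dim B_p(K).
\]

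Closing the argument is then pure algebra. Substituting $f_p(K)-\dim B_{p-1}(K)=b_p(K)+\dim B_p(K)$ and using the identity $(\nu^{p+1}+(1-\nu)^{p+1})-(\nu^{p+2}+(1-\nu)^{p+2})=\nu(1-\nu)(\nu^p+(1-\nu)^p)$ produces $(\nu^{p+1}+(1-\nu)^{p+1})b_p(K)+\nu(1-\nu)(\nu^p+(1-\nu)^p)\dim B_p(K)$, and the Euler-characteristic relation $0=\sum_i(-1)^i(f_i(K)-b_i(K))$ rewrites $\dim B_p(K)$ as $\sum_{i=p+1}^{\dim K}(-1)^{i+1-p}(f_i(K)-b_i(K))$, recovering $M_{p,\nu}(K)$. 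The only nontrivial input is Lemma~\ref{Lem_Sp}: the simplex-spanned nature of $S_p$ and $S_{p+1}$ is what makes their intersection with $C_*(K_0^\epsilon)$ combinatorially transparent, so the step one would naively expect to be hard—lower-bounding $\E_\nu(\dim B_*(K_0^\epsilon))$—collapses to a Bernoulli count.
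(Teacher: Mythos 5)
Your argument is correct and follows essentially the same route as the paper: Lemma~\ref{Lem_Sp} provides simplex-spanned complements of $Z_p(K)$ and $Z_{p+1}(K)$, the basis simplices surviving in $K_0^\epsilon$ bound $\dim B_{p-1}(K_0^\epsilon)$ and $\dim B_p(K_0^\epsilon)$ from below (your injectivity of $\partial$ on $S_p\cap C_p(K_0^\epsilon)$ is exactly the paper's transversality statement), and the Bernoulli count plus the same algebra recovers $M_{p,\nu}(K)$. The only cosmetic point is that the identity $\dim B_p(K)=\sum_{i=p+1}^{\dim K}(-1)^{i+1-p}(f_i(K)-b_i(K))$ comes from telescoping the rank--nullity relations degree by degree (the paper's ``by induction''), not from the single Euler-characteristic identity alone, but this changes nothing in the argument.
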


\begin{proof}
By definition,  for every $\epsilon\in C^0(K;\Z/2\Z),$
$$ \begin{array}{rcl}
b_p(K_0^\epsilon)&=&\dim Z_p(K_0^\epsilon)- \dim B_p(K_0^\epsilon)\\
&=& f_p(K_0^\epsilon)-\dim B_{p-1}(K_0^\epsilon)-\dim B_p(K_0^\epsilon).
\end{array}
$$ 
Let $S_p(K)$ be a subspace complementary to $Z_p(K)$ in $C_p(K)$. Its dimension equals
$$\begin{array}{rcl}
\dim S_p(K)&=&f_p(K)-\dim Z_p(K)\\
&=&(f_p(K)-b_p(K))-\dim B_p(K)\\ 
&=&\sum_{i=p}^n(-1)^{k-p}(f_i(K)-b_i(K)), \mbox{ by induction.}
\end{array}$$

From Lemma~\ref{Lem_Sp} we can choose $S_p(K)$ to be spanned by simplices.
A linear combination of such simplices  belongs to $C_p(K_0^\epsilon)$ if and only if each simplex belongs to $C_p(K_0^\epsilon)=\ker pr_\epsilon$. Therefore,
$\dim (S_p(K)\cap C _p(K_0^\epsilon))=\#\{\sigma \in \B_p| \sigma\in K_0^\epsilon\},$ where $\B_p$ denotes our basis of $S_p(K)$. 
However, $S_p(K)\cap C_p(K^\epsilon_0)$ is transverse to $\ker \partial_p^{K_0^\epsilon}$ in $C_p(K_0^\epsilon),$ so that 
$\dim (S_p(K)\cap C_p(K_0^\epsilon))\leq \dim B_{p-1}({K_0^\epsilon})$. 
We deduce 

$$\begin{array}{rcl}
\E_\nu(b_p(K_0^\epsilon))&\leq&\E_\nu(f_p(K_0^\epsilon))-\E_\nu(\#\{\sigma\in \B_p| \sigma\in K_0^\epsilon\})-\E_\nu(\#\{\sigma\in \B_{p+1}| \sigma\in K_0^\epsilon\})\\
&\leq&(\nu^{p+1}+(1-\nu)^{p+1})(f_p(K) -\#\B_p)-(\nu^{p+2}+(1-\nu)^{p+2})\#\B_{p+1}\\
&\leq&(\nu^{p+1}+(1-\nu)^{p+1})\big(f_p(K) -\sum_{i=p}^{\dim K}(-1)^{i-p}(f_i(K)-b_i(K))\big)-\\
&&(\nu^{p+2}+(1-\nu)^{p+2})\sum_{i=p+1}^{\dim K}(-1)^{i+1-p}(f_i(K)-b_i(K))\\
&\leq& (\nu^{p+1}+(1-\nu)^{p+1})b_p(K)+\nu(1-\nu)(\nu^{p}+(1-\nu)^{p})\sum_{i=p+1}^{\dim K}(-1)^{i+1-p}f_i(K)-b_i(K).
\end{array}$$  
The  third inequality follows from the fact that $\#\B_p=\dim S_p(K)$.
\end{proof}

\subsection{Monotony theorem}\label{Sect_Refined}

\begin{lem} \label{Lem_Mpdelta}
For every $n>0$ and $p\in\{1,\ldots, n-1\}$, $M_{p,\nu}(\Delta_n)=\nu(1-\nu)(\nu^{p}+(1-\nu)^{p})\binom{n}{p+1}$, while $M_{0,\nu}(\Delta_n)=1 +\nu(1-\nu)2n.$ \end{lem}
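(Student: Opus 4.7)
The plan is to compute $M_{p,\nu}(\Delta_n)$ directly from its definition, using the two elementary facts that $\Delta_n$ is contractible (so $b_0(\Delta_n) = 1$ and $b_p(\Delta_n) = 0$ for $p \geq 1$) and that $f_i(\Delta_n) = \binom{n+1}{i+1}$. The only nontrivial ingredient is an evaluation of a truncated alternating binomial sum, which I will carry out using the well-known identity
\[
\sum_{j=0}^{k}(-1)^j\binom{n+1}{j} = (-1)^k\binom{n}{k}.
\]

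First I treat the case $p \geq 1$. Here the Betti term of $M_{p,\nu}$ vanishes, and substituting the face numbers of $\Delta_n$ gives
\[
M_{p,\nu}(\Delta_n) = \nu(1-\nu)(\nu^p+(1-\nu)^p)\sum_{i=p+1}^{n}(-1)^{i+1-p}\binom{n+1}{i+1}.
\]
I reindex by $j = i+1$ and pull out the sign $(-1)^{-p}$ to obtain $(-1)^p\sum_{j=p+2}^{n+1}(-1)^{j}\binom{n+1}{j}$. Since the full alternating sum of $\binom{n+1}{j}$ vanishes, the tail $\sum_{j=p+2}^{n+1}$ equals $-\sum_{j=0}^{p+1}$, and the displayed identity applied with $k=p+1$ gives $\sum_{j=0}^{p+1}(-1)^j\binom{n+1}{j} = (-1)^{p+1}\binom{n}{p+1}$. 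Combining these signs produces exactly $\binom{n}{p+1}$, which yields the desired formula.

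For the case $p=0$, the Betti term contributes $(\nu + (1-\nu))\cdot 1 = 1$, while the remaining sum becomes
\[
\nu(1-\nu)\cdot 2\sum_{i=1}^n (-1)^{i+1}\binom{n+1}{i+1}.
\]
Reindexing again and using $\sum_{j=0}^{n+1}(-1)^j\binom{n+1}{j} = 0$, the inner sum equals $-\binom{n+1}{0}+\binom{n+1}{1} = n$, giving $M_{0,\nu}(\Delta_n) = 1+2n\nu(1-\nu)$.

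There is no real obstacle here; the whole lemma reduces to a one-line computation once the partial alternating binomial identity is in hand. The only point requiring care is keeping track of the sign $(-1)^{i+1-p}$ after reindexing so that the cancellation against $(-1)^{p+1}\binom{n}{p+1}$ is clean, and treating $p=0$ separately since then the Betti-number contribution no longer vanishes.
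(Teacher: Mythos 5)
Your proposal is correct and follows essentially the same route as the paper: both proofs plug the Betti numbers ($b_0=1$, $b_i=0$ for $i>0$) and face numbers $f_i(\Delta_n)=\binom{n+1}{i+1}$ of the simplex into the definition of $M_{p,\nu}$ and then evaluate the resulting alternating binomial sum. The only (inessential) difference is that the paper evaluates $\sum_{i=p+1}^n(-1)^{i+1-p}\binom{n+1}{i+1}=\binom{n}{p+1}$ directly by Pascal's rule and telescoping, whereas you pass to the complementary partial sum and invoke the standard identity $\sum_{j=0}^{k}(-1)^j\binom{n+1}{j}=(-1)^k\binom{n}{k}$, which amounts to the same telescoping argument.
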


\begin{proof}
The simplex $\Delta_n$ is contractible,  so that $b_0(\Delta_n)=1$ and $b_i(\Delta_n)=0$ for every $i>0.$
Moreover, for every $i\in\{0,\ldots, n\}$, $f_i(\Delta_n)=\binom{n+1}{i+1}$ as the $i$-simplices of $\Delta_n$ are in one-to-one correspondence with sets of $i+1$ vertices of $\Delta_n$.

Thus, for $p\in\{1,\ldots,n-1\},$

$$\begin{array}{rcl}\sum_{i=p+1}^n (-1)^{i+1-p}\binom{n+1}{i+1}&=&\sum_{i=p+1}^{n-1}(-1)^{i+1-p}(\binom{n}{i}+\binom{n}{i+1}) +(-1)^{n+1-p}\\
&=&(-1)^{n-p}+\binom{n}{p+1}+(-1)^{n+1-p}\\
&=&\binom{n}{p+1},
\end{array}$$
and the result follows from the definition of $M_{p,\nu}(\Delta_n)$, see \S~\ref{Sect_Intro1}.
\end{proof}

Let us remark that the left hand side of Theorem~\ref{Thm_UpB} vanishes for a large family of complexes, for instance those given by the following proposition.

\begin{prop}\label{Prop_L} Let $p>0$ and $L$ be a finite simplicial complex which contains a family of simplices $\{\sigma_i\in L, i\in I\}$ such that
\begin{enumerate}
\item $\forall \tau \in L, \exists i\in I \mbox{ such that }  \tau<\sigma_i$,
 \item  $\forall i\neq j \in I, \dim(\sigma_i\cap \sigma_j)< p-1$. 
  \end{enumerate}
  Then, for every $ l\geq p$, $b_p(L)=0=\E_\nu(b_p(L_0^\epsilon)).$
\end{prop}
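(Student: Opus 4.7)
The plan is to prove both vanishing statements simultaneously by a Mayer--Vietoris induction on a filtration of $L$ (respectively of $L_0^\epsilon$) built from the generating family $\{\sigma_i\}_{i\in I}$.

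Enumerate $I=\{1,\ldots,N\}$ and set $L_k=\sigma_1\cup\cdots\cup\sigma_k$, viewed as the subcomplex consisting of these simplices together with all their faces, with $L_0=\emptyset$. Hypothesis (1) gives $L_N=L$, and hypothesis (2) gives
$$\dim(L_{k-1}\cap\sigma_k)=\max_{j<k}\dim(\sigma_j\cap\sigma_k)<p-1$$
for every $k\geq 1$. The simplicial Mayer--Vietoris sequence with $\mathbb{Z}/2\mathbb{Z}$ coefficients,
$$\cdots\to H_l(L_{k-1}\cap\sigma_k)\to H_l(L_{k-1})\oplus H_l(\sigma_k)\to H_l(L_k)\to H_{l-1}(L_{k-1}\cap\sigma_k)\to\cdots,$$
then shows, for every $l\geq p$, that $H_l(L_k)\cong H_l(L_{k-1})$: the intersection has dimension at most $p-2<l-1$, so its homology vanishes in degrees $l$ and $l-1$, while $H_l(\sigma_k)=0$ since $\sigma_k$ is contractible and $l\geq p\geq 1$. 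Iterating from $L_0=\emptyset$ up to $L_N=L$ yields $b_l(L)=0$ for every $l\geq p$, and in particular $b_p(L)=0$.

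For the second vanishing, fix $\epsilon\in C^0(L;\mathbb{Z}/2\mathbb{Z})$. By the definition of $L_0^\epsilon$, the subcomplex $\sigma_i\cap L_0^\epsilon$ consists of the faces of $\sigma_i$ on which $\epsilon$ is constant; these are precisely the faces spanned by a subset of $\epsilon_{|\sigma_i}^{-1}(0)$ or of $\epsilon_{|\sigma_i}^{-1}(1)$. Hence $\sigma_i\cap L_0^\epsilon$ is the disjoint union of two closed simplices (either of which may be empty), so $H_l(\sigma_i\cap L_0^\epsilon)=0$ for every $l\geq 1$. Moreover, $(\sigma_i\cap L_0^\epsilon)\cap(\sigma_j\cap L_0^\epsilon)$ is contained in $\sigma_i\cap\sigma_j$, hence still has dimension $<p-1$. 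The very same Mayer--Vietoris induction applied to the filtration $L_k\cap L_0^\epsilon$, $k=0,\ldots,N$, therefore yields $b_l(L_0^\epsilon)=0$ for every $l\geq p$ and every $\epsilon$. Integrating against $\mu_\nu$ gives $\mathbb{E}_\nu(b_p(L_0^\epsilon))=0$.

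There is no genuine obstacle in this argument; the only point requiring attention is the geometric description of $\sigma_i\cap L_0^\epsilon$ as a disjoint union of two contractible pieces, which is what ensures that exactly the same dimensional input $\dim(\cdot\cap\cdot)<p-1$ feeds into both runs of Mayer--Vietoris and yields vanishing in all degrees $\geq p$.
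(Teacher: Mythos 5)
Your proof is correct, but it takes a genuinely different route from the paper's. You glue the simplices $\sigma_i$ one at a time and run an inductive Mayer--Vietoris argument, using twice the dimensional hypothesis $\dim(\sigma_i\cap\sigma_j)<p-1$ (once for $L$, once for $L_0^\epsilon$, where you also need the geometric observation that $\sigma_i\cap L_0^\epsilon$ is a disjoint union of at most two closed simplices). The paper instead considers the disjoint union $D=\bigsqcup_{i\in I}\sigma_i$ and the canonical simplicial map $h:D\to L$: hypothesis (2) forces $h$ to be a bijection on $l$-simplices for all $l\geq p-1$, so $h_\#$ is a chain-level isomorphism in those degrees and hence induces $H_l(D)\cong H_l(L)$ for $l\geq p$; since $D$ is a disjoint union of simplices, these groups vanish. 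The case of $L_0^\epsilon$ is then handled in one stroke by pulling back, $\tilde\epsilon=\epsilon\circ h$, and noting that $D_0^{\tilde\epsilon}$ is again a disjoint union of simplices -- the exact analogue of your two-simplices observation, but with no induction and no long exact sequence. The paper's argument is shorter and non-inductive, and its chain-level viewpoint (comparing complements of cycle spaces spanned by simplices) is the mechanism reused later in Theorem~\ref{Thm_Mp}; your argument uses only standard Mayer--Vietoris machinery and makes the vanishing $b_l(L)=b_l(L_0^\epsilon)=0$ for all $l\geq p$ equally transparent. Both are valid; the only point to state carefully in yours is $L_{k-1}\cap\sigma_k=\bigcup_{j<k}(\sigma_j\cap\sigma_k)$, which justifies the dimension bound you use, and you have done so.
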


\begin{proof} Let $D=\bigsqcup_{i\in I}\sigma_i$ be the disjoint union of the $\sigma_i$, $i\in I$, and $h:D\to L$  be the associated canonical simplicial  map.  From the hypothesis, for every $ l\geq p-1, \,$ $h:D\to L$ provides a bijection between the $l$-simplices of $D$  and $L$. Thus, $h_\#: C_l(D)\to C_l(L)$ is an isomorphism of vector spaces and a chain map which induces an isomorphism $h_*: H_l(D)\to H_l(L)$. As $H_l(D)=0$, we deduce  that $b_l(L)$ vanishes. 
Now, for every $\epsilon \in C^0(L,\Z/2\Z)$, we set $\tilde{\epsilon}=\epsilon\circ  h.$ Then, $h: D_0^{\tilde{\epsilon}}\to L_0^\epsilon$  induces another isomorphism  $h_*: H_l(D_0^{\tilde{\epsilon}})\to H_l(L_0^\epsilon)$ for $l\geq p$. However, by definition, $D_0^{\tilde{\epsilon}}$ is again a disjoint union of simplices, so that  $H_l(D_0^{\tilde{\epsilon}})=0$. Hence the result.
\end{proof}
The following monotony theorem completes Theorem~\ref{Thm_MpL}.

\begin{thm}\label{Thm_Mp} Let $\nu\in[0,1]$ and $p\in \N$. For every finite simplicial complex $K$ and every subcomplex $L$ of $K$,
$M_{p,\nu}(L)-\E_\nu(b_p(L_0^\epsilon))\leq M_{p,\nu}(K)-\E_\nu(b_p(K_0^\epsilon)).$ 

Similarly, $\E_\nu(f_p(L))-\E_\nu(b_p(L_0^\epsilon))\leq\E_\nu(f_p(K))-\E_\nu(b_p(K_0^\epsilon)).$
 \end{thm}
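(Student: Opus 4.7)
The plan is to reduce both inequalities to a single ``elementary move'' --- attaching one simplex at a time --- by listing the simplices of $K \setminus L$ in an order of weakly increasing dimension. This produces a filtration $L = K^{(0)} \subset K^{(1)} \subset \cdots \subset K^{(N)} = K$ with $K^{(j+1)} = K^{(j)} \cup \{\tau_{j+1}\}$ and $\partial\tau_{j+1} \subset K^{(j)}$. By telescoping, it suffices to prove both inequalities in the one-simplex case $K = L \cup \{\tau\}$ with $\partial\tau \subset L$; set $q := \dim\tau$ and $\Delta X := X(K) - X(L)$ for any quantity $X$.

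For this elementary move, the long exact sequence of $(K, L)$ gives $H_*(K, L) \cong \tilde{H}_*(S^q)$, so exactly one of the following occurs: either $\partial\tau \in B_{q-1}(L)$, the \emph{cycle case}, where $b_q(K) = b_q(L)+1$ and every other Betti number is preserved; or $\partial\tau \notin B_{q-1}(L)$, the \emph{bounding case}, where $b_{q-1}(K) = b_{q-1}(L)-1$ and every other Betti number is preserved. Together with $f_q(K) = f_q(L)+1$ and the identity $\nu(1-\nu)(\nu^p+(1-\nu)^p) = \alpha_p - \alpha_{p+1}$ (with $\alpha_p := \nu^{p+1}+(1-\nu)^{p+1}$), a direct computation from the definition of $M_{p,\nu}$ shows that $\Delta M_{p,\nu}$ vanishes in every case except $q=p$ cycle (where $\Delta M_{p,\nu} = \alpha_p$) and $q=p+1$ bounding (where $\Delta M_{p,\nu} = -\alpha_{p+1}$). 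Similarly $\Delta f_p = \mathbf{1}_{q=p}$.

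Next, I compute $\Delta \E_\nu(b_p(\cdot_0^\epsilon))$ by conditioning on whether $\epsilon$ is constant on $\tau$ --- an event of probability $\alpha_q$ under the product measure $\mu_\nu$. On its complement $K_0^\epsilon = L_0^\epsilon$; on the event itself $K_0^\epsilon = L_0^\epsilon \cup \{\tau\}$ (all subfaces of $\tau$ already lying in $L_0^\epsilon$), and the same cycle/bounding dichotomy applies to the pair $(K_0^\epsilon, L_0^\epsilon)$. The structural key is the inclusion $B_{q-1}(L_0^\epsilon) \subseteq B_{q-1}(L)$, which follows from $L_0^\epsilon \subseteq L$: if $\partial\tau \notin B_{q-1}(L)$ then automatically $\partial\tau \notin B_{q-1}(L_0^\epsilon)$, whereas if $\partial\tau \in B_{q-1}(L)$ the status in $L_0^\epsilon$ is genuinely random. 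Writing $P \in [0,1]$ for the relevant conditional probability in each case, a short check gives $\Delta \E_\nu(b_p(K_0^\epsilon)) = \alpha_p P$ in the $q=p$ cycle-in-$K/L$ case, $-\alpha_{p+1}$ deterministically in the $q=p+1$ bounding-in-$K/L$ case, $-\alpha_{p+1} P$ in the $q=p+1$ cycle-in-$K/L$ case, and $0$ otherwise. Matching these against the values of $\Delta M_{p,\nu}$ and $\Delta f_p$ above yields $\Delta M_{p,\nu} - \Delta \E_\nu(b_p) \geq 0$ and $\Delta f_p - \Delta \E_\nu(b_p) \geq 0$ in every scenario, with the positive slack in the cycle-in-$K/L$ cases arising precisely from the conditional probability that $\partial\tau$ fails to bound in $L_0^\epsilon$.

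The main obstacle is organizing the case analysis without error: one must carefully keep track of which cycle/bounding subcases of $(K_0^\epsilon, L_0^\epsilon)$ are compatible with each case of $(K, L)$, and verify that the arithmetic of $\alpha_p, \alpha_{p+1}$ matches the product-measure probabilities $\alpha_q$. The inclusion $B_*(L_0^\epsilon) \subseteq B_*(L)$ is the one-way asymmetry that makes the inequalities work --- a cycle in $(K, L)$ can refine to either a cycle or a bounding in $(K_0^\epsilon, L_0^\epsilon)$, but a bounding in $(K, L)$ remains bounding in $(K_0^\epsilon, L_0^\epsilon)$ --- and it is this asymmetry that supplies the nonnegative slack in the crucial $(q=p,\text{cycle})$ and $(q=p+1,\text{cycle})$ steps.
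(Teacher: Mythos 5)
Your proof is correct, and it takes a genuinely different route from the paper's. You telescope over a filtration $L=K^{(0)}\subset\cdots\subset K^{(N)}=K$ obtained by attaching one simplex $\tau$ at a time in weakly increasing dimension, and for each elementary attachment you compare the increments of $M_{p,\nu}$, of the $f_p$-term and of $\E_\nu(b_p(\cdot_0^\epsilon))$, using the long-exact-sequence dichotomy for the pair $(L\cup\tau,L)$, the identity $\nu(1-\nu)(\nu^p+(1-\nu)^p)=\alpha_p-\alpha_{p+1}$ with $\alpha_p=\nu^{p+1}+(1-\nu)^{p+1}$, the fact that $\{\epsilon_{|\tau}\ \mathrm{constant}\}$ has probability $\alpha_q$ under the product measure, and the one-way inclusion $B_{q-1}(L_0^\epsilon)\subseteq B_{q-1}(L)$; I checked the increment table ($\Delta M_{p,\nu}=\alpha_p$ only for a $q=p$ cycle attachment, $-\alpha_{p+1}$ only for a $q=p+1$ bounding attachment, $0$ otherwise, versus $\Delta\E_\nu(b_p)=\alpha_pP$, $-\alpha_{p+1}$, $-\alpha_{p+1}P$, $0$) and it is consistent, so both inequalities hold step by step. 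The paper argues globally instead: via Lemma~\ref{Lem_Sp} it chooses complements $S_p(L),S_{p+1}(L)$ of the cycle spaces spanned by simplices, extends them to simplex-spanned complements $S_p(K)=S_p(L)\oplus S_p(K,L)$ of $Z_p(K)$, bounds $\dim B_{p-1}(K_0^\epsilon)$ and $\dim B_p(K_0^\epsilon)$ from below by $\dim(S_p(K,L)\cap C_p(K_0^\epsilon))+\dim B_{p-1}(L_0^\epsilon)$ and its analogue, and then integrates over $\epsilon$ exactly as in the proof of Theorem~\ref{Thm_UpB}. Your incremental argument is more elementary (nothing beyond the exact sequence of a pair and conditioning on one simplex), it makes the source of the slack explicit (the conditional probability that $\partial\tau$ fails to bound in $L_0^\epsilon$ even though it bounds in $L$), and with $L=\emptyset$ it reproves Theorem~\ref{Thm_UpB} at the same time; the paper's linear-algebra argument avoids all case analysis and stays within the complement-of-cycles formalism it already set up. Two points you should state explicitly when writing this up: the intermediate sets are subcomplexes with $\partial\tau_{j+1}\subset K^{(j)}$ precisely because of the weakly-increasing-dimension ordering, and for $q\geq1$ the attachment adds no vertex, so the probability spaces for consecutive complexes literally coincide (for $q=0$ one integrates out the extra coordinate, the constancy event then having probability $\alpha_0=1$).
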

 
 \begin{proof}
 Let $S_p(L)$ (resp. $S_{p+1}(L)$) be  complementary to $Z_p(L)$ (resp. $Z_{p+1}(L)$) in $C_p(L)$ (resp. $C_{p+1}(L)$) and spanned by $p$-simplices, see Lemma~\ref{Lem_Sp}. The intersection of $S_p(L)$ with $Z_p(K)$ is thus $\{0\}$ and $Z_p(K)\oplus S_p(L)=Z_p(K) +C_p(L). $ Let $S_p(K,L)$  (resp. $S_{p+1}(K,L)$) be the complement
of this space in $C_p(K)$ (resp. in $C_{p+1}(K))$ spanned by $p$-simplices of $K$  in such a way that $S_p(K,L)$ completes $S_p(L)$ to a complement $S_p(K)=S_p(L) \oplus S_p(K,L)$  of $Z_p(K)$ in $C_p(K)$ (resp. $S_{p+1}(K,L)$ completes $S_{p+1}(L)$ to a complement $S_{p+1}(K)=S_{p+1}(L) \oplus S_{p+1}(K,L)$  of $Z_{p+1}(K)$ in $C_{p+1}(K)$). 

Let $B_p(K)\subset C_p(K)$ be the image of  $\partial_{p+1}: C_{p+1}(K)\to C_p(K)$. For every $\epsilon\in C^0(K;\Z/2\Z),$  $L_0^\epsilon=K_0^\epsilon\cap L,$ so that $B_{p-1}(L_0^\epsilon)\subset B_{p-1}(K_0^\epsilon)$  and $B_p(L_0^\epsilon)\subset B_p(K_0^\epsilon)$. 
 Moreover, $S_p(K,L)\cap C_p(K_0^\epsilon)$ is complement to $Z_p(K_0^\epsilon)+C_p(L_0^\epsilon)$ in $C_p(K_0^\epsilon),$  so that 
 $\dim (S_p(K,L)\cap C_p(K_0^\epsilon))+\dim B_{p-1}(L_0^\epsilon) \leq \dim B_{p-1}(K_0^\epsilon)$ and similarly
 $ \dim (S_{p+1}(K,L)\cap C_{p+1}(K_0^\epsilon))+\dim B_p(L_0^\epsilon) \leq \dim B_p(K_0^\epsilon).$
As before we deduce 
 
  $\begin{array}{rcl}
  b_p(K_0^\epsilon)&=&f_p(K_0^\epsilon)-\dim B_{p-1}(K_0^\epsilon)-\dim B_p(K_0^\epsilon)\\
  &\leq& f_p(K_0^\epsilon)-f_p(L_0^\epsilon)+\big(f_p(L_0^\epsilon)-\dim B_{p-1}(L_0^\epsilon)-\dim B_p(L_0^\epsilon)\big)-\\
  &&\dim (S_p(K,L)\cap C_p(K_0^\epsilon)) -\dim (S_{p+1}(K,L) \cap C_{p+1}(K_0^\epsilon))\\
  &\leq& f_p(K_0^\epsilon)-\dim (S_p(K)\cap C_p(K_0^\epsilon)) -\dim (S_{p+1}(K) \cap C_{p+1}(K_0^\epsilon))+b_p(L_0^\epsilon)-\\
  &&\big(f_p(L_0^\epsilon)-\dim (S_p(L)\cap C_p(L_0^\epsilon)) -\dim (S_{p+1}(L) \cap C_{p+1}(L_0^\epsilon))\big).
  \end{array}$
  
The first part of the result is thus obtained as in Theorem~\ref{Thm_UpB} by integrating over $C^0(K;\Z/2\Z)$ and the second part follows from the inequality $\dim S_p(K)\cap C_p(K_0^\epsilon)\geq \dim S_p(L)\cap C_{p}(L_0^\epsilon)$ (resp. $\dim S_{p+1}(K)\cap C_{p+1}(K_0^\epsilon)\geq \dim S_{p+1}(L)\cap C_{p+1}(L_0^\epsilon)$).
 \end{proof}
 
 Thanks to Proposition~\ref{Prop_L}, Theorem~\ref{Thm_Mp} makes it possible to improve  the upper bounds given by Theorem~\ref{Thm_UpB}.

 \begin{cor}\label{Cor_Mp}
 Under the hypothesis of Theorem~\ref{Thm_Mp}, let $L$
 be subcomplex of $K$ satisfying the hypothesis of Proposition~\ref{Prop_L}. Then, $$\E_\nu(b_p(K_0^\epsilon))\leq (\nu^{p+1}+(1-\nu)^{p+1})b_p(K)+\nu(1-\nu)(\nu^p+(1-\nu)^p)\sum_{i=p+1}^{\dim K}(-1)^{i+1-p}\big(f_i(K)-f_i(L)-b_i(K)\big).$$
  \end{cor}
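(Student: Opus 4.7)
The plan is to derive this estimate as a direct consequence of the monotony inequality in Theorem~\ref{Thm_Mp} together with the vanishing provided by Proposition~\ref{Prop_L}, followed by a short calculation with the definition of $M_{p,\nu}$. Since the statement assumes that $L$ satisfies the hypotheses of Proposition~\ref{Prop_L}, we are implicitly in the regime $p>0$, so all vanishing statements of that proposition are available.

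First, I would rearrange the first inequality of Theorem~\ref{Thm_Mp} into the form
$$\E_\nu(b_p(K_0^\epsilon))\leq M_{p,\nu}(K)-M_{p,\nu}(L)+\E_\nu(b_p(L_0^\epsilon)).$$
Next, I would invoke Proposition~\ref{Prop_L}: the family $\{\sigma_i\}_{i\in I}$ witnessing the hypothesis for $L$ gives $b_l(L)=0$ and $\E_\nu(b_l(L_0^\epsilon))=0$ for every $l\geq p$. In particular the term $\E_\nu(b_p(L_0^\epsilon))$ drops, and $b_p(L)=0$ as well as $b_i(L)=0$ for each $i\geq p+1$.

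Then, using the definition
$$M_{p,\nu}(L)=(\nu^{p+1}+(1-\nu)^{p+1})\,b_p(L)+\nu(1-\nu)(\nu^p+(1-\nu)^p)\sum_{i=p+1}^{\dim L}(-1)^{i+1-p}\bigl(f_i(L)-b_i(L)\bigr),$$
the first summand vanishes and the sum reduces to $\sum_{i=p+1}^{\dim L}(-1)^{i+1-p}f_i(L)$. Extending the range of summation up to $\dim K$ by the convention $f_i(L)=0$ for $i>\dim L$, subtraction from the corresponding expression for $M_{p,\nu}(K)$ gives
$$M_{p,\nu}(K)-M_{p,\nu}(L)=(\nu^{p+1}+(1-\nu)^{p+1})b_p(K)+\nu(1-\nu)(\nu^p+(1-\nu)^p)\sum_{i=p+1}^{\dim K}(-1)^{i+1-p}\bigl(f_i(K)-f_i(L)-b_i(K)\bigr),$$
which is exactly the claimed right-hand side.

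There is no real obstacle: the content of the statement lies entirely in Theorem~\ref{Thm_Mp} and Proposition~\ref{Prop_L}, and the proof is a bookkeeping step. The only mild care needed is to check that one may write the difference of the two Euler-type tails as a single sum up to $\dim K$ with $f_i(L)=0$ for $i>\dim L$, which is immediate.
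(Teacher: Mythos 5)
Your proof is correct and follows exactly the paper's own argument: apply the monotony inequality of Theorem~\ref{Thm_Mp}, use Proposition~\ref{Prop_L} to kill $\E_\nu(b_p(L_0^\epsilon))$ and all Betti numbers of $L$ in degrees $\geq p$, so that $M_{p,\nu}(L)$ reduces to $\nu(1-\nu)(\nu^p+(1-\nu)^p)\sum_{i=p+1}(-1)^{i+1-p}f_i(L)$, and subtract. The only difference is that you spell out the bookkeeping (extending the sum to $\dim K$ with $f_i(L)=0$ for $i>\dim L$) which the paper leaves implicit.
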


\begin{proof}
From Theorem~\ref{Thm_Mp}, $\E_\nu(b_p(K_0^\epsilon))\leq M_{p,\nu}(K)-M_{p,\nu}(L)+\E_\nu(b_p(L_0^\epsilon))$ and from Proposition~\ref{Prop_L}, $\E_\nu(b_p(L_0^\epsilon))=0$ and $M_{p,\nu}(L)=\nu(1-\nu)(\nu^p+(1-\nu)^p)\sum_{i=p+1}^n(-1)^{i+1-p}f_i(L).$ Hence the result.
\end{proof}

For every $d>0$, let $\mathcal{L}_d^{n,p}$ be the finite set of simplicial subcomplexes $L$ of $\Sd^d(\Delta_n)$
containing a family of simplices $\{\sigma_i\in L, i\in I\}$ such that 

\begin{enumerate}
\item $\forall \tau\in L$, $\exists i\in I$ such that $\tau<\sigma_i,$
\item $\forall i\neq j\in I, \dim(\sigma_i\cap\sigma_j)<p-1,$
\item $\forall i\in I, \dim(\sigma_i\cap  \Sd^d(\partial\Delta_n))<p-1$.
\end{enumerate}
We set $\lambda_{p,\nu}^d(n)=\frac{1}{(n+1)!^d}\textup{max}_{L\in \mathcal{L}_d^{n,p}}M_{p,\nu}(L)$.
 
 \begin{prop}\label{Prop_lambda}
 The sequence $(\lambda_{p,\nu}^d(n))_{d\geq 0}$ is increasing and bounded.
 \end{prop}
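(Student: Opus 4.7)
The plan is to handle boundedness and monotonicity separately, in both cases exploiting that Proposition~\ref{Prop_L} (with $p\geq 1$) collapses the defining expression for $M_{p,\nu}(L)$ on any $L\in\mathcal{L}_d^{n,p}$ to
\[
M_{p,\nu}(L)=\nu(1-\nu)(\nu^p+(1-\nu)^p)\sum_{i=p+1}^{n}(-1)^{i+1-p}f_i(L).
\]
Boundedness then follows directly: since $L\subset\Sd^d(\Delta_n)$ one has $f_i(L)\leq f_i(\Sd^d(\Delta_n))$, and the normalized face numbers $f_i(\Sd^d(\Delta_n))/(n+1)!^d$ converge as $d\to+\infty$ by \cite{BW, DPS, SW1}. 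Dividing the display by $(n+1)!^d$ yields a universal upper bound on $\lambda_{p,\nu}^d(n)$, uniform in $d$.

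For monotonicity, I would construct, given any $L\in\mathcal{L}_d^{n,p}$, a witness $L^{*}\in\mathcal{L}_{d+1}^{n,p}$ satisfying $M_{p,\nu}(L^{*})=(n+1)!\,M_{p,\nu}(L)$, so that after dividing by $(n+1)!^{d+1}$ one obtains $\lambda_{p,\nu}^{d+1}(n)\geq\lambda_{p,\nu}^d(n)$. For each $n$-simplex $\sigma\in\Sd(\Delta_n)^{[n]}$, the canonical simplicial isomorphism $\Sd^d(\Delta_n)\simeq\Sd^d(\sigma)\subset\Sd^{d+1}(\Delta_n)$ transports $L$, together with its distinguished family of generators, to a copy $L_\sigma\subset\Sd^d(\sigma)$; set $L^{*}=\bigcup_\sigma L_\sigma$, equipped with the union of all the transported generators.

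The main checkpoint is verifying that $L^{*}\in\mathcal{L}_{d+1}^{n,p}$. Condition (1) is immediate from the construction, and conditions (2) and (3) for two generators lying in the same copy $L_\sigma$ descend from the corresponding properties of $L$. For two generators coming from distinct copies $L_\sigma$ and $L_{\sigma'}$, their intersection sits inside $\Sd^d(\sigma\cap\sigma')\subset\Sd^d(\partial\sigma)$, hence by condition (3) applied to $L$ it has dimension strictly less than $p-1$; the same boundary argument, with $\partial\Delta_n$ in place of $\sigma'$, delivers condition (3) for $L^{*}$ inside $\Sd^{d+1}(\Delta_n)$. Consequently no simplex of dimension $\geq p-1$ is shared between two distinct copies, and $f_i(L^{*})=(n+1)!\,f_i(L)$ for every $i\geq p+1$. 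Substituting this into the collapsed formula above, applied to $L^{*}$ via Proposition~\ref{Prop_L}, produces the claimed equality $M_{p,\nu}(L^{*})=(n+1)!\,M_{p,\nu}(L)$. The only real difficulty is this bookkeeping of cross-copy intersections, which ultimately reduces to condition (3) on the original $L$; once it is done, the rest is a direct computation.
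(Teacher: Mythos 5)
Your proof is correct and follows essentially the same route as the paper's: replicate a maximizer $L_d$ into every $n$-simplex of a further subdivision via chosen simplicial isomorphisms, use condition (3) to see that the copies only overlap (and only meet the boundary) in dimension $<p-1$, collapse $M_{p,\nu}$ via Proposition~\ref{Prop_L}, and bound from above by the face numbers of $\Sd^d(\Delta_n)$, whose normalization converges by \cite{BW, DPS, SW1}; the paper merely does the replication in one shot for an arbitrary number $m$ of subdivision steps rather than step by step. One small caveat on boundedness: substituting $f_i(L)\leq f_i(\Sd^d(\Delta_n))$ term by term into the \emph{alternating} sum does not by itself give an upper bound, since the negatively signed terms would be increased; either first discard the negative terms and bound $\sum_{i\geq p+1}f_i(L)$ by the total face number of $\Sd^d(\Delta_n)$, or argue as the paper does that $M_{p,\nu}(L)\leq M_{p,\nu}(\Sd^d(\Delta_n))$ (which follows from Theorem~\ref{Thm_Mp} together with Proposition~\ref{Prop_L}).
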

 
 Let $\lambda_{p,\nu}(n)$ be the limit of this sequence $(\lambda_{p,\nu}^d(n))_{d\geq 0}$.

 \begin{proof}
Let $d>0$. The set $ \mathcal{L}_d^{n,p}$ being finite, there exists a subcomplex $L_d$ in $\mathcal{L}_d^{n,p}$ which maximize $M_{p,\nu}$ over $\mathcal{L}_d^{n,p}$. Let $m>0$. For every $n$-simplex $\sigma$ of $\Sd^m(\Delta_n)$, we choose a simplicial  isomorphism $\Delta_n\xrightarrow{f_\sigma}\sigma$. Let $L_{d+m}=\bigcup_{\sigma\in {\Sd^m(\Delta_n)}^{[n]}}(f_\sigma)_*L_d$. It is a subcomplex of $\Sd^{d+m}(\Delta_n).$ Moreover,
 $L_{d+m}$ belongs to $\mathcal{L}_{d+m}^{n,p},$ so that $\frac{1}{(n+1)!^{d+m}}M_{p,\nu}(L_{d+m})\leq \lambda^{d+m}_{p, \nu}(n).$
 From  Proposition~\ref{Prop_L},
$M_{p,\nu}(L_{d+m})=\nu(1-\nu)(\nu^p+(1-\nu)^p)\sum_{i=p+1}^n(-1)^{i+1-p}f_i(L_{d+m}).$
However, by construction,  $L_{d+m}\cap \Sd^d(\Sd^m(\Delta_n)^{(n-1)})$  is of dimension $<p-1$ 
so that it does not contribute to the computation of $M_{p,\nu}(L_{d+m}).$
As the cardinality of $\Sd^m(\Delta_n)^{[n]}$ is $(n+1)!^m$, we deduce that  $M_{p,\nu}(L_{d+m})=(n+1)!^mM_{p,\nu}(L_d).$ Thus $\lambda_{p,\nu}^d(n)\leq \lambda^{d+m}_{p,\nu}(n)$. 

Now, for every $L\in \mathcal{L}_d^{n,p},$ $$M_{p,\nu}(L)\leq M_{p,\nu}(\Sd^d(\Delta_n))=\nu(1-\nu)(\nu^p+(1-\nu)^p)\sum_{i=p+1}^{n}(-1)^{i+1-p}f_i(\Sd^d(\Delta_n)).$$
The sequence $\frac{M_{p,\nu}(\Sd^d(\Delta_n))}{(n+1)!^d}$ is convergent, see \cite{BW, DPS}, so that the sequence $ \lambda_{p,\nu}^d(n)$ is bounded. Hence the result. 
 \end{proof}
 
 We can now deduce Theorem~\ref{Thm_limEd}.
Recall that by definition, for every $i\in\{0,\ldots, n\},$  $q_{i,n}=\lim_{d\to +\infty}\frac{f_i(\Sd^d(K))}{f_n(K)(n+1)!^d}.$

\begin{proof}[Proof of Theorem~\ref{Thm_limEd}.]
We proceed as in the proof of Proposition~\ref{Prop_lambda}.
For every $n$-simplex $\sigma\in K$, we choose a simplicial isomorphism $f_\sigma:\Delta_n\to \sigma$. For every $d>0,$ there exists a subcomplex $L_d\in  \mathcal{L}_d^{n,p}$ which maximize the function $M_{p,\nu}$ over $\mathcal{L}_d^{n,p}$. We set $K_d=\bigcup_{\sigma\in K^{[n]}}(f_\sigma)_*L_d.$ We deduce as in the proof of Proposition~\ref{Prop_lambda} that $M_{p,\nu}(K_d)=f_n(K)M_{p,\nu}(L_d).$ 
Since $b_{p+1}(\Sd(K))=b_{p+1}(K)$, we deduce 
  
  $\begin{array}{rcl}
  \E_{\nu,d}(b_p)&\leq& \E_\nu(b_p({\Sd^d(K)}^\epsilon_0))+b_{p+1}(K),\,\mbox{from Corollary~\ref{Cor_canisom},}\\
  &\leq& M_{p,\nu}(\Sd^d(K))- M_{p,\nu}(K_d)+b_{p+1}(K), \,\mbox{from  Proposition~\ref{Prop_L} and Theorem~\ref{Thm_Mp},}\\
   &\leq& M_{p,\nu}(\Sd^d(K))-f_n(K)M_{p,\nu}(L_d)+b_{p+1}(K).
   \end{array}$
  
  However, $$M_{p,\nu}(\Sd^d(K))={b_p(K)}{(\nu^{p+1}+(1-\nu)^{p+1})} +\nu(1-\nu)(\nu^{p}+(1-\nu)^{p})\sum_{i=p+1}^n(-1)^{i+1-p}(f_i(\Sd^d(K))-b_i(K)),$$ so that $\lim\limits_{d\to +\infty}\frac{M_{p,\nu}(\Sd^d(K))}{f_n(K)(n+1)!^d}=\nu(1-\nu)(\nu^{p}+(1-\nu)^{p})\sum_{i=p+1}^n(-1)^{i+1-p}q_{i,n}$, see \cite{BW, DPS} or  \cite{SW2}.
  By definition, $\lim\limits_{d\to +\infty}\frac{M_{p,\nu}(L_d)}{(n+1)!^d}=\lambda_{p,\nu}(n)$ and thus  $$\lim\limits_{d\to +\infty}\frac{E_d(b_p)}{f_n(K)(n+1)!^d}\leq \nu(1-\nu)(\nu^{p}+(1-\nu)^{p})\sum_{i=p+1}^n(-1)^{i+1-p}q_{i,n}-\lambda_{p,\nu}(n).$$
\end{proof}

In the light of Theorem~\ref{Thm_limEd}, it would be great to be able to compute  $\lambda_{p,\nu}(n)$. Theorem~\ref{Thm_mnp}  provides an estimate of this number from below.
\section{Tilings}\label{Sec_Tiling}

\subsection{Tiles}
For every $n>0$ and every $s\in \{0,1,\ldots,n+1\}$, we set $T_s^n=\Delta_n\setminus (\sigma_1\cup\ldots\cup\sigma_s),$ where $\sigma_i$ denotes a facet of $\Delta_n,$ $ i\in \{1,\ldots,s\}.$  In particular, the tile $T_{n+1}^n$ is the open $n$-simplex $\stackrel{\circ}{\Delta}_n$ and $T_0^n$ is the closed one $\Delta_n,$ see Figure~\ref{Fig_Tiles}.

\begin{figure}[h]
   \begin{center}
    \includegraphics[scale=0.4]{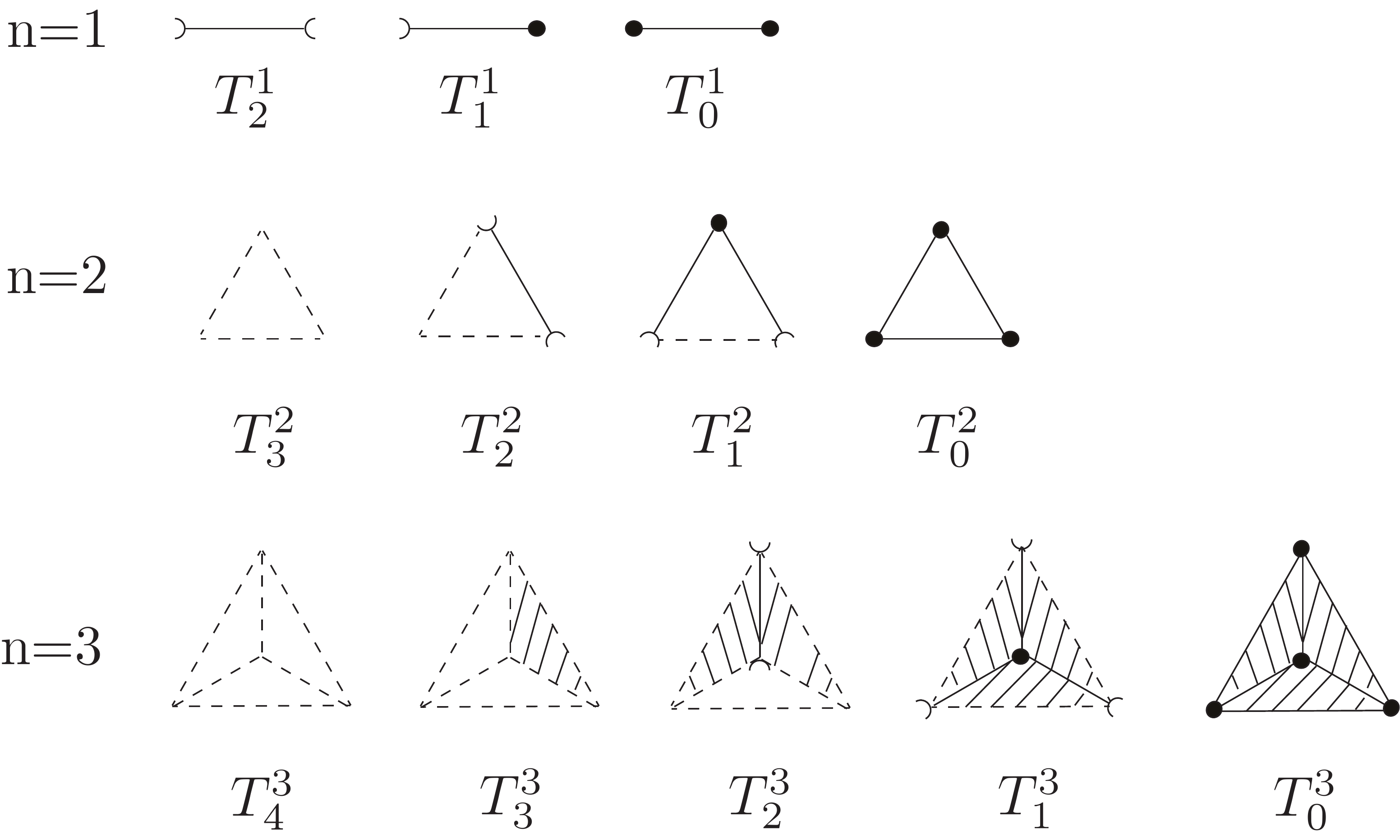}
    \caption{The tiles in dimensions less than 4.}
    \label{Fig_Tiles}
      \end{center}
 \end{figure}

\begin{prop}\label{Prop_Pieces}
For every $n>0$ and every $s\in \{0,\ldots, n+1\},$ $T^{n+1}_s$ is a cone over $T_s^n$, deprived of  its center if $s\neq 0$. Moreover, 
$T^{n+1}_s$ is a disjoint union  $T_{n+2}^{n+1}\sqcup T_s^n\sqcup T_{s+1}^n\sqcup\ldots \sqcup T^n_{n+1}.$
In particular,  the cone $T_s^{n+1}$ deprived of its base $T^n_s$ is $T_{s+1}^{n+1}$.
\end{prop}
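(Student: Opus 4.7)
The plan is to exploit the cone structure of $\Delta_{n+1}$ from a suitably chosen apex. Label the vertices of $\Delta_{n+1}$ by $v_1,\ldots,v_{n+2}$, with $\sigma_i$ the facet opposite $v_i$, and, after relabeling, assume that the facets removed to form $T_s^{n+1}$ are $\sigma_1,\ldots,\sigma_s$. The key choice is to take $v:=v_{n+2}$ as the apex: for every $i\leq s$, the vertex $v$ belongs to $\sigma_i$ (since $v$ is opposite $\sigma_{n+2}$, not $\sigma_i$), so it is automatically discarded from $T_s^{n+1}$ whenever $s\geq 1$, while for $s=0$ it remains in $T_0^{n+1}=\Delta_{n+1}$.

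First I would write $\Delta_{n+1}$ as the cone from $v$ over $\sigma_{n+2}$, so that every $y\neq v$ admits a unique expression $y=tx+(1-t)v$ with $x\in\sigma_{n+2}$ and $t\in(0,1]$. Since each removed facet $\sigma_i$, $i\leq s$, is itself the cone from $v$ over the $(n-1)$-face $\sigma_i\cap\sigma_{n+2}$, one has $y\in\sigma_i$ if and only if $x\in\sigma_i\cap\sigma_{n+2}$. Hence $y\in T_s^{n+1}$ if and only if $x$ lies in $\sigma_{n+2}$ deprived of its $s$ facets $\sigma_i\cap\sigma_{n+2}$, $i\leq s$, i.e.\ $x\in T_s^n$. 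Combined with the status of the apex $v$, this yields the first assertion.

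For the disjoint decomposition, I would peel off the remaining facets $\sigma_{n+2},\sigma_{n+1},\ldots,\sigma_{s+1}$ in this order. Setting
\[
U_j:=\sigma_{n+2-j}\setminus\bigl(\sigma_1\cup\cdots\cup\sigma_s\cup\sigma_{n+3-j}\cup\cdots\cup\sigma_{n+2}\bigr)
\]
for $j=0,1,\ldots,n+1-s$, and letting $U_\infty$ denote the set of points of $T_s^{n+1}$ lying on no facet of $\Delta_{n+1}$, one observes that the distinct facets of $\Delta_{n+1}$ meet $\sigma_{n+2-j}$ in distinct $(n-1)$-faces of this $n$-simplex. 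Consequently $U_j$ is a copy of $T_{s+j}^n$ while $U_\infty=T_{n+2}^{n+1}$, and these pieces partition $T_s^{n+1}$ by construction.

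Finally, the base of the cone is $\sigma_{n+2}$, whose intersection with $T_s^{n+1}$ is precisely $U_0\cong T_s^n$; removing it from $T_s^{n+1}$ amounts to adding $\sigma_{n+2}$ to the list of excluded facets, which, after relabeling, is exactly $T_{s+1}^{n+1}$. The only delicate point I expect is the bookkeeping of facet indices together with the case $s=0$ (where the apex itself belongs to the tile); both are essentially notational, and the whole argument reduces to the elementary fact that any two distinct facets of a simplex meet along a codimension-one face of each.
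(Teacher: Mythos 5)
Your proof is correct and follows essentially the same route as the paper's: both realize $T_s^{n+1}$ as the cone over $T_s^n$ from a vertex lying on all removed facets (hence deprived of the apex when $s\geq 1$), identify the removed facets of $\Delta_{n+1}$ with cones over facets of the base, and obtain the disjoint decomposition by successively stripping off the remaining facets, each contributing a tile $T_{s+j}^n$, with the interior $T_{n+2}^{n+1}$ left over. Your explicit partition into the sets $U_j$ is just the paper's iterated use of the identity ``cone minus base equals $T_{s+1}^{n+1}$'' written out directly.
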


\begin{proof}
If $s=0,$ $T_{0}^{n+1}=\Delta_{n+1}=c\ast T_{0}^n$, where $c$ denotes a vertex of $\Delta_{n+1}$. If $s>0$, $T_s^n=\Delta_n\setminus (\sigma_1\cup\ldots\cup\sigma_{s})$ by definition, where $\sigma_i$ is a facet for every $i\in\{1,\ldots, s\},$ and so  $(c\ast T^n_s)\setminus \{c\}=(c\ast \Delta_n)\setminus ((c\ast \sigma_1)\cup\ldots\cup (c\ast \sigma_{s})).$ However, $c\ast \Delta_n=\Delta_{n+1}$ and $\theta_i=c\ast \sigma_i$ is an $n$-simplex,  $i\in \{1,\ldots, s\}$. It follows from the definition that $T_{s}^{n+1}=(c\ast T_s^n)\setminus \{c\}.$ It is the cone over $T_s^n$ deprived of its center $c$. The base $T_s^n$  of this cone is the intersection of $T_{s+1}^{n+1}$ with the base $\theta=\Delta_n$ of the cone $\Delta_{n+1}=c\ast \Delta_n$. Thus, $T_{s}^{n+1}\setminus T_{s}^{n}=(c\ast \Delta_n)\setminus (\theta_1\cup\ldots\cup \theta_{s}\cup \theta)=T^{n+1}_{s+1}$.  The result holds true for $s=0$ as well, since by definition $T_{1}^{n+1}=\Delta_{n+1}\setminus \Delta_n=T_{0}^{n+1}\setminus T^n_{0}.$
By induction, we deduce that for every $s\in \{0,\ldots,n+1\},$ $T_{s}^{n+1}\cap \partial \Delta_{n+1}$ is the disjoint union $T_s^n\sqcup\ldots\sqcup T^n_{n+1}.$ 
\end{proof} 

\begin{cor}\label{Cor_dDelta}
For every $n>0$,  $\partial \Delta_{n+1}=\bigsqcup_{s=0}^{n+1}T_s^n.$
\end{cor}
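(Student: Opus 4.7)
The corollary is immediate from Proposition~\ref{Prop_Pieces} applied with $s=0$. The plan is simply to specialize the disjoint union decomposition of $T_s^{n+1}$ given there to this case and then identify which subset of $\Delta_{n+1}$ we are seeing.

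More precisely, Proposition~\ref{Prop_Pieces} asserts that
\[
T_s^{n+1} = T_{n+2}^{n+1} \sqcup T_s^n \sqcup T_{s+1}^n \sqcup \ldots \sqcup T_{n+1}^n
\]
for every $s\in\{0,\ldots,n+1\}$. Setting $s=0$ and using the conventions recalled just before the proposition, namely $T_0^{n+1}=\Delta_{n+1}$ and $T_{n+2}^{n+1}=\stackrel{\circ}{\Delta}_{n+1}$, the displayed identity becomes
\[
\Delta_{n+1} = \stackrel{\circ}{\Delta}_{n+1} \sqcup T_0^n \sqcup T_1^n \sqcup \ldots \sqcup T_{n+1}^n.
\]
Subtracting the open interior from both sides, i.e.\ intersecting with $\partial\Delta_{n+1}$, yields the claimed equality
\[
\partial\Delta_{n+1} = \bigsqcup_{s=0}^{n+1} T_s^n.
\]

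Since the whole argument is a one-line specialization, there is really no obstacle; the substantive content has already been carried out in the proof of Proposition~\ref{Prop_Pieces}, where the inductive decomposition of $T_s^{n+1}$ as a cone minus certain facets was established. If a reader prefers, one can alternatively give a direct proof by observing that $\partial\Delta_{n+1}$ is the union of the $n+2$ facets of $\Delta_{n+1}$, and stratify this union according to how many of the previously-chosen facets $\sigma_1,\ldots,\sigma_{s}$ each point of a facet avoids; this reproduces the tile $T_s^n$ on each facet. But the cleanest route is the one-line reduction to the $s=0$ case of Proposition~\ref{Prop_Pieces}.
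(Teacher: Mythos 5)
Your proposal is correct and matches the paper's own argument: the paper likewise applies Proposition~\ref{Prop_Pieces} with $s=0$ and compares the resulting decomposition of $T_0^{n+1}=\Delta_{n+1}$ with the splitting $\Delta_{n+1}=\stackrel{\circ}{\Delta}_{n+1}\sqcup\partial\Delta_{n+1}$. Removing the interior $T_{n+2}^{n+1}$, as you do, is exactly the step "hence the result" in the paper.
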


\begin{proof} By definition $T_{0}^{n+1}=\Delta_{n+1}=\stackrel{\circ}{\Delta}_{n+1}\sqcup \partial \Delta_{n+1}=T_{n+2}^{n+1}\sqcup \partial \Delta_{n+1}.$ It  follows from Proposition~\ref{Prop_Pieces} that  $T_{0}^{n+1}=T_{n+2}^{n+1}\sqcup(\sqcup_{s=0}^{n+1}T_s^n).$ Hence the result.
\end{proof}

\begin{prop}\label{Prop_fi}
For every $n>0$ and every  $s\in \{0,\ldots, n+1\}$, 
$$f_j(T_s^n)=\begin{cases} 
0 & \mbox{if } 0\leq j<s-1,\\
\binom{n+1-s}{n-j}& \mbox{if } s-1\leq j\leq n.
\end{cases}$$ 
\end{prop}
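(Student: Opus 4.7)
The plan is to give a direct vertex-counting argument, identifying which simplices of $\Delta_n$ survive in $T_s^n$ and then counting by subsets of vertices. First I would label the vertices of $\Delta_n$ as $v_0, v_1, \ldots, v_n$ in such a way that for every $i \in \{1, \ldots, s\}$ the removed facet $\sigma_i$ is the unique $n$-face opposite the vertex $v_i$, that is, $\sigma_i = [v_0, \ldots, \widehat{v_i}, \ldots, v_n]$. With this labeling, a face $\tau$ of $\Delta_n$ is contained in $\sigma_i$ if and only if $v_i \notin \tau$.

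Next I would observe that, since $T_s^n = \Delta_n \setminus (\sigma_1 \cup \cdots \cup \sigma_s)$ as a subset of $\Delta_n$, the open simplex $\stackrel{\circ}{\tau}$ of a face $\tau \subseteq \Delta_n$ lies in $T_s^n$ if and only if $\stackrel{\circ}{\tau}$ meets no $\sigma_i$, equivalently $\tau$ is not a face of any $\sigma_i$. By the characterization above, this is in turn equivalent to the condition $\{v_1, \ldots, v_s\} \subseteq \tau$. Therefore $f_j(T_s^n)$ equals the number of $(j+1)$-element subsets of $\{v_0, v_1, \ldots, v_n\}$ that contain the prescribed $s$-element subset $\{v_1, \ldots, v_s\}$.

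Finally this count is purely combinatorial: it vanishes when $j+1 < s$, i.e.\ when $j < s-1$, and otherwise equals the number of ways to choose the remaining $j+1-s$ vertices from the $n+1-s$ unconstrained vertices $\{v_0, v_{s+1}, \ldots, v_n\}$, which is $\binom{n+1-s}{j+1-s} = \binom{n+1-s}{n-j}$. As a sanity check, the extremal cases agree with the known data: $s=0$ recovers $f_j(\Delta_n) = \binom{n+1}{j+1}$, while $s = n+1$ gives $f_j(T_{n+1}^n) = \binom{0}{n-j}$, which is $1$ for $j=n$ and $0$ otherwise, consistent with $T_{n+1}^n = \stackrel{\circ}{\Delta}_n$.

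The argument has no real obstacle since the hard work has already been done in Proposition~\ref{Prop_Pieces}; it reduces to one bijective identification and one binomial evaluation. As an alternative route, one could induct on $n$ using the decomposition $T_s^{n+1} = T_{n+2}^{n+1} \sqcup T_s^n \sqcup T_{s+1}^n \sqcup \cdots \sqcup T_{n+1}^n$ from Proposition~\ref{Prop_Pieces}, the base case $f_j(T_{n+2}^{n+1}) = \delta_{j,n+1}$, and the hockey-stick identity $\sum_{i=0}^{n+1-s} \binom{i}{n-j} = \binom{n+2-s}{n+1-j}$; but the direct vertex argument above seems more transparent and keeps the geometric meaning of the formula visible.
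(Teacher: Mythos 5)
Your argument is correct, but it takes a genuinely different route from the paper's. You identify the surviving open faces of $T_s^n$ directly: a face $\tau$ of $\Delta_n$ lies in $T_s^n$ exactly when it is not a face of any removed facet, i.e.\ when its vertex set contains the $s$ vertices opposite those facets, and you then count $(j+1)$-element vertex sets containing a fixed $s$-element set, getting $\binom{n+1-s}{j+1-s}=\binom{n+1-s}{n-j}$. (One cosmetic point: labelling the removed facets as those opposite $v_1,\ldots,v_s$ silently assumes $s\le n$; for $s=n+1$ there is no vertex $v_{n+1}$, so you should label them as opposite $v_0,\ldots,v_{s-1}$, after which the same count gives $\binom{0}{n-j}$ exactly as in your sanity check.) The paper instead argues by induction on the dimension $n$ via Proposition~\ref{Prop_Pieces}: for $1\le s\le n+1$, $T_s^{n+1}$ is the cone over $T_s^n$ deprived of its apex, so its open faces are either faces of the base or cones over them, yielding $f_j(T_s^{n+1})=f_j(T_s^n)+f_{j-1}(T_s^n)$ and concluding by Pascal's rule, with $T_0^{n+1}$ and $T_{n+2}^{n+1}$ handled directly. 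Your count is more elementary and makes the binomial coefficient bijectively visible, while the paper's induction keeps the argument uniform with the rest of Section~\ref{Sec_Tiling}, where the same cone structure drives the tilings of the subdivided tiles. The alternative you sketch, inducting through the decomposition $T_s^{n+1}=T_{n+2}^{n+1}\sqcup T_s^n\sqcup\cdots\sqcup T_{n+1}^n$ with a hockey-stick summation, is yet a third route, close in spirit to but not identical with the paper's cone recursion.
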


By face number of a tile, we mean its number of open simplices of the corresponding dimension.

\begin{proof}
We proceed by induction on the dimension $n$.
If $n=1$, one checks the result. Now let us suppose that the result holds true for  $n\geq 1$. 
By definition, $T_{0}^{n+1}=\Delta_{n+1}$, so that for every $j\in \{0,\ldots, n+1\},$ $f_j(T_{0}^{n+1})=\binom{n+2}{j+1}=\binom{n+2}{n+1-j}$. Hence the result for $s=0.$ Similarly, $T_{n+2}^{n+1}=\stackrel{\circ}{\Delta}_{n+1}$, so that $f_j(T_{n+2}^{n+1})=\delta_{n+1 j}$. Now, if $1\leq s\leq n+1$, we know from Proposition~\ref{Prop_Pieces} that $T_{s}^{n+1}=(c\ast T_s^n)\setminus \{c\}$. The open faces of $T_{s}^{n+1}$ are thus either the faces of the basis $T_s^n$ of the cone, or the cones over the faces of $T_s^n$. We deduce from Pascal's formula that for every $j\in\{0,\ldots,n+1\},$ $f_j(T_{s}^{n+1})=f_j(T^n_s)+f_{j-1}(T_s^n)=\binom{n+1-s}{n-j}+\binom{n+1-s}{n+1-j}=\binom{n+2-s}{n+1-j}.$ 
\end{proof}

\subsection{Tilings}\label{Sect_Tiling}

For every $n>0$, let $\tT(n)$ be the set of  $n$-dimensional simplicial complexes that can be tiled by 
$T_0^n,\ldots,T_{n+1}^n.$ 

\begin{prop}\label{Prop_Sqtte}
\begin{enumerate}
\item Every pure  finite  simplicial complex of dimension one is tileable.
\item For every $n>0$, if $K\in \tT(n)$, then   for every $i\in \{1,\ldots, n\},$ the $i$-skeleton  $K^{(i)}$ belongs to $ \tT(i).$ Moreover, any tiling of $K$
 induces a tiling on $K^{(i)}.$ 
 \end{enumerate}
\end{prop}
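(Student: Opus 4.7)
The plan is to handle the two parts separately: the first by a direct combinatorial construction on graphs, the second by iterating the one-step passage from dimension $n$ to $n-1$. For part 1, let $K$ be a pure $1$-dimensional simplicial complex. Every vertex is incident to at least one edge, so choose, for each vertex $v\in K^{[0]}$, an incident edge $e_v$. For each edge $e=[v_0,v_1]\in K^{[1]}$, let $s_e\in\{0,1,2\}$ count the endpoints $v_i$ with $e_{v_i}\neq e$, and assign to $e$ the tile $T^1_{s_e}\subset e$ obtained by removing those $s_e$ endpoints. The family $\{T^1_{s_e}\}_{e\in K^{[1]}}$ is then a tiling: interior edge-points are covered by their own edge, each vertex $v$ is covered exactly once (by the tile on $e_v$), and distinct tiles are pairwise disjoint.

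For part 2, the case $i=n$ is trivial, and once the step from dimension $n$ to $n-1$ is established it applies recursively to yield $K^{(n-2)}\in\tT(n-2)$, and so on; hence it suffices to prove that $K\in\tT(n)$ implies $K^{(n-1)}\in\tT(n-1)$, together with the induced tiling. Fix a tiling $\tT=\{T^n_{s_\sigma}\subset\sigma:\sigma\in K^{[n]}\}$ of $K$. For each $n$-simplex $\sigma$, order its $n+1-s_\sigma$ non-removed facets arbitrarily as $\tau_{\sigma,1},\ldots,\tau_{\sigma,n+1-s_\sigma}$. Iterating the identity $T^{n+1}_s\setminus T^n_s=T^{n+1}_{s+1}$ of Proposition~\ref{Prop_Pieces} along this ordering yields a disjoint decomposition
\[
T^n_{s_\sigma}\cap\partial\sigma \;=\;\bigsqcup_{k=1}^{n+1-s_\sigma}U_{\sigma,k},
\]
where $U_{\sigma,k}\subset\tau_{\sigma,k}$ is a tile of type $T^{n-1}_{s_\sigma+k-1}$, namely $\tau_{\sigma,k}$ minus the original $s_\sigma$ removed facets and minus its $k-1$ intersections with the previously peeled bases.

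To verify that the family $\{U_{\sigma,k}\}$ is a tiling of $K^{(n-1)}$, take any $p\in|K^{(n-1)}|$: it lies in a unique $n$-tile $T^n_{s_\sigma}$, and since $p\notin\stackrel{\circ}{\sigma}$ it belongs to $T^n_{s_\sigma}\cap\partial\sigma$, hence to exactly one $U_{\sigma,k}$, giving the disjoint covering. For the one-tile-per-$(n-1)$-simplex property, for any $(n-1)$-simplex $\tau$ of $K$, the interior $\stackrel{\circ}{\tau}$ lies in a unique $n$-tile $T^n_{s_\sigma}$, so $\tau$ is necessarily a non-removed facet of $\sigma$ and therefore equals $\tau_{\sigma,k}$ for exactly one $(\sigma,k)$, with assigned tile $U_{\sigma,k}$. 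The main technical obstacle is the iterated cone decomposition producing pieces of the specific types $T^{n-1}_{s_\sigma},\ldots,T^{n-1}_n$; this follows from recursively applying Proposition~\ref{Prop_Pieces}. The induced tiling depends on the chosen orderings of non-removed facets, but any choice yields a valid tiling, so the existence statement is unaffected.
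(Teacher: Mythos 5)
Your proof is correct. For part 2 you follow essentially the paper's route: the $(n-1)$-skeleton is obtained by deleting the open $n$-simplices, which are exactly the interiors of the tiles, and Proposition~\ref{Prop_Pieces} shows that what remains of each tile $T^n_{s_\sigma}$ is a disjoint union of $(n-1)$-dimensional tiles $T^{n-1}_{s_\sigma}\sqcup\cdots\sqcup T^{n-1}_n$; your iterated peeling along an ordering of the non-removed facets is just an explicit, indexed re-derivation of that decomposition (it tacitly uses that the intersections of a facet $\tau_{\sigma,k}$ with the other facets of $\sigma$ are pairwise distinct facets of $\tau_{\sigma,k}$, which is immediate for a simplex and is what makes $U_{\sigma,k}$ a tile of type $T^{n-1}_{s_\sigma+k-1}$), and the passage to all skeleta via $K^{(i)}=(K^{(n)})^{(i)}$ is the same as in the paper. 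For part 1, however, you argue genuinely differently: the paper proceeds by induction on the number of edges, adding one edge at a time and extending the tiling by $T^1_0$, $T^1_1$ or $T^1_2$ according to how many of its vertices are already covered, whereas you give a direct, global construction from a choice function $v\mapsto e_v$ assigning to each vertex an incident edge and letting each edge retain exactly the endpoints that chose it. Both are valid; your construction is non-inductive, makes the tiling completely explicit, and displays the freedom in choosing it, while the paper's induction is uniform with the inductive style used for the higher-dimensional statements in the same section.
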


Recall that an $n$-dimensional simplicial complex is called pure if each of its simplex is a face of an $n$-simplex. The second part of  Proposition~\ref{Prop_Sqtte} is the first part of Theorem~\ref{Thm_SdSq}.
 
\begin{proof}

For the first part we proceed by induction on the number of edges. If  such a complex $K$ contains only one edge,  it is tiled by $T_0^1=\Delta_1$, since it is pure. Now let us suppose that  the result holds true for every pure complex containing $N$ edges. Let $K$ be a pure simplicial complex of dimension one with $N+1$ edges and let $e$ be an edge of $K$. Thus, $K=K'\cup e$ where $K'$ is a pure simplicial complex covered by $N$ edges and by the hypothesis it can be tiled. We choose a tiling of $K'$. Then, $e$ has 0,1 or  2 common vertices with $K'$.  We then extend the tiling of $K'$ to a tiling of $K$ by adding  $T_0^1, T_1^1$ or $T_2^1,$ respectively.

And for the second part we proceed by induction on the dimension $n$. If $n=1$ the result follows from the first part. Let us suppose that the result holds true for the  dimension $n$. Let $K$  be a  tiled  finite simplicial complex of dimension $n+1$. The $n$-skeleton of $K$ can be obtained by removing all open $(n+1)$-simplices. 
These are exactly the interiors of the tiles of $K$. However, from Proposition~\ref{Prop_Pieces}, for every $s\in \{0,\ldots, n+2\},$ $T_s^{n+1}\setminus T_{n+2}^{n+1}$ is tiled by the tiles of dimension $n$.  Thus, the $n$-skeleton of $K$ gets an induced tiling of dimension $n$. The result follows from the fact that $K^{(i)}=(K^{(n)})^{(i)}.$
\end{proof}

Proposition~\ref{Prop_Sqtte} raises the following question. Let $X$ be  a triangulated manifold with boundary. If $X$  belongs to $\tT(n)$, does  $\partial X$ belong to $\tT(n-1)$?

\begin{ex} 
\begin{enumerate}

\item  Figure~\ref{Fig_extile} shows some examples of one-dimensional tiled simplicial complexes.
\begin{figure}[h]
   \begin{center}
    \includegraphics[scale=0.4]{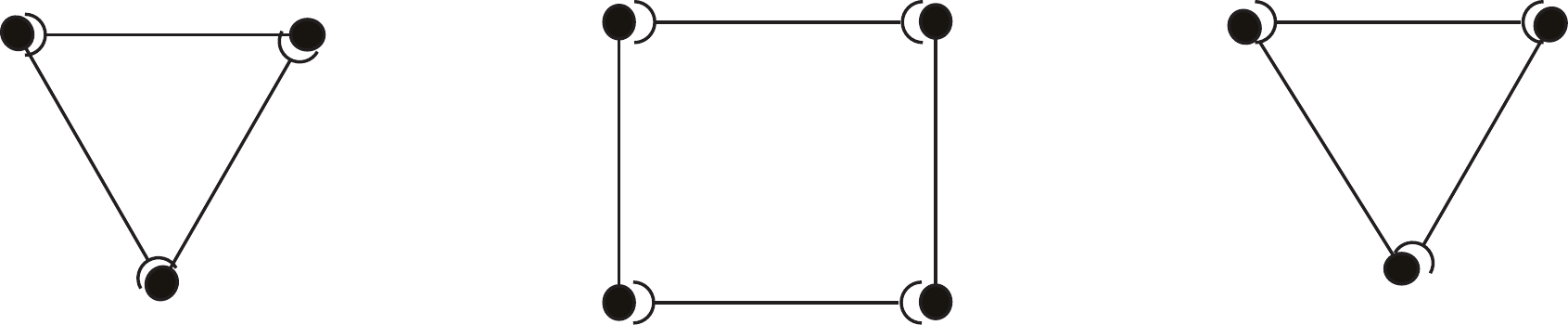}
    \caption{Tiled one-dimensional simplicial complexes.}
    \label{Fig_extile}
      \end{center}
 \end{figure}
 
 \item  In dimension 2,  $\partial\Delta_2\times [0,1]$ can be tiled using six $T_1^2$, by gluing three copies of the tiling shown in Figure~3.
 
 \begin{figure}[h]
   \begin{center}
    \includegraphics[scale=0.45]{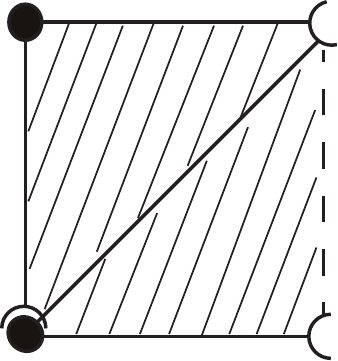}
    \caption{A union of two $T_1^2$.}
    \label{Fig_twoP}
      \end{center}
 \end{figure}
 
 \item  The prism $\partial(\Delta_2\times [0,1])$ can be tiled using six $T_1^2$ and two $T^2_3$, by gluing on the two boundary components of the previous example the open simplices $T^2_3$.

 \item The prism $\partial(\Delta_2\times [0,1])$ can likewise be tiled by  six $T_2^2$ and two $T^2_0$.
 
 \item The cylindrical parts in examples 3 and 4 above can be glued together to produce a tiled two-torus. 
 
\end{enumerate}
\end{ex}

\begin{defn} A tiling of an $n$-dimensional simplicial complex is called regular if and only if it uses one tile  $T_0^n$ for each connected component. 
\end{defn}

Recall that a simplicial complex is called \emph{shellable} if its maximal simplices can be arranged in linear order $\sigma_1,\sigma_2,\ldots,\sigma_t$ in such a way  that the subcomplex ($\cup_{i=1}^{k-1}\sigma_i)\cap \sigma_k$  is pure and $(\dim \sigma_k-1)$-dimensional for all $k=2,\ldots, t$, see \cite{Koz} for instance.  

\begin{prop} Let $K$ be a tiled finite $n$-dimensional simplicial complex that has a filtration 
$K_1\subset K_2\subset\ldots\subset K_{N-1}\subset K_N=K$ of tiled subcomplexes such that for every $i\in \{1,\ldots N\},$
$K_i$ contains $i$ tiles. Then, the tiling of $K$ is regular and  its connected components are  shellable and homotopy equivalent  to  $n$-dimensional  spheres or  balls.
\end{prop}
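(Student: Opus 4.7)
The strategy is to interpret the filtration as a shelling of $K$ component by component, and then to deduce regularity and the homotopy type from that shelling.

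\emph{Step 1 (Structural lemma).} For $i\geq 2$, let $\sigma_i$ denote the unique $n$-simplex of $K_i\setminus K_{i-1}$. I first claim that $\sigma_i\cap K_{i-1}$ is either empty or a union of $s_i$ closed facets of $\sigma_i$, where $s_i$ is the index of the tile assigned to $\sigma_i$ in the tiling of $K_i$. Indeed, the tile at $\sigma_i$ is some $T^n_{s_i}$, so by definition it covers $\sigma_i$ minus a union of $s_i$ closed facets; the points outside this tile are covered by tiles based at simplices $\sigma_j$ with $j<i$, so the $s_i$ facets in question must lie in $K_{i-1}$. To upgrade this inclusion to an equality, I would exploit the compatibility of the tilings of $K_{i-1}$ and $K_i$ along the filtration: a face of $\sigma_i$ already in $K_{i-1}$ is already covered by a tile of $K_{i-1}$, hence cannot be reassigned to the newly added tile without breaking the tile count, so the tile at $\sigma_i$ in $K_i$ is forced to be $\sigma_i\setminus(\sigma_i\cap K_{i-1})$.

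\emph{Step 2 (Shellability).} Given a connected component $C$ of $K$, list its $n$-simplices in filtration order as $\sigma_{i_1},\ldots,\sigma_{i_m}$. For $k\geq 2$, $n$-simplices from other components are disjoint from $\sigma_{i_k}$, so $\sigma_{i_k}\cap(\sigma_{i_1}\cup\cdots\cup\sigma_{i_{k-1}})=\sigma_{i_k}\cap K_{i_k-1}$; by Step~1 this is a union of $s_{i_k}\geq 1$ facets of $\sigma_{i_k}$, and so it is pure of dimension $n-1$. This is exactly the shelling condition recalled before the proposition.

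\emph{Step 3 (Regularity).} By Step~1, the tile $T^n_0$ is assigned to $\sigma_i$ precisely when $\sigma_i\cap K_{i-1}=\emptyset$, which happens exactly when $\sigma_i$ initiates a new connected component of the growing complex. Any simplex added to an already-present component attaches along at least one facet and therefore receives a tile $T^n_{s_i}$ with $s_i\geq 1$. Thus each connected component of $K$ contains exactly one $T^n_0$ tile, which is the definition of regularity.

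\emph{Step 4 (Homotopy type).} I induct on $i$. The base case $K_1=\Delta_n$ is an $n$-ball. For the inductive step, three cases arise. If $s_i=0$, a new ball component appears. If $1\leq s_i\leq n$, the attaching subcomplex $\sigma_i\cap K_{i-1}$ is a union of $s_i\leq n$ closed facets of $\sigma_i$, and deformation retracts onto any vertex of $\sigma_i$ not lying in the removed facets (such a vertex exists because $s_i\leq n$); being contractible, it causes attaching the $n$-cell $\sigma_i$ to preserve the homotopy type of the affected component. If $s_i=n+1$, the attaching subcomplex is the whole $\partial\sigma_i\cong S^{n-1}$, which is null-homotopic in the ambient component (contractible by induction), so the attachment produces an $n$-sphere.

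\emph{Main obstacle.} The delicate point is Step~1. One must argue that tileability cannot be salvaged by reassigning faces of $\sigma_i$ previously covered in $K_{i-1}$ to the newly created tile, since such reassignments would enlarge $\sigma_i\cap K_{i-1}$ beyond the union of removed facets. This rests on the compatibility built into the phrase \emph{``filtration of tiled subcomplexes''}, which forces the tiling of $K_i$ to extend that of $K_{i-1}$ by a single new tile. Once Step~1 is secured, Steps~2--4 reduce to standard shelling and CW-attachment arguments.
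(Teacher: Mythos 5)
Your proposal follows essentially the same route as the paper's own proof: an induction along the filtration, adding one tile at a time and splitting into the cases $s_i=0$, $1\le s_i\le n$, $s_i=n+1$; your Step 1 is the paper's observation that $K_i=K_{i-1}\cup T_i$ with $T_i$ a single tile (so $\sigma_i\cap K_{i-1}$ is exactly the union of the removed facets), and your Step 2 actually writes out the shelling order, a detail the paper leaves implicit. One local slip in Step 4: the attaching subcomplex $F_1\cup\dots\cup F_{s_i}$ deformation retracts onto a vertex lying in \emph{all} of the removed facets (such a vertex exists precisely because $s_i\le n$: any vertex not opposite one of the $F_j$ belongs to every $F_j$), whereas a vertex \emph{not} lying in the union exists only when $s_i\le 1$, and a deformation retract must in any case be contained in the set -- with this correction your contractibility argument goes through. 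The two remaining leaps in your argument (that a tile with $s_i\ge 1$ attaches to a single already-present component, and that the component capped off by an open tile $T^n_{n+1}$ is contractible rather than already a homotopy sphere) are exactly the same unargued steps taken in the paper's proof, so they do not distinguish your attempt from the original.
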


(Shellable simplicial complexes are known to be homotopy equivalent to wedges
of spheres, see Theorem~12.3 of \cite{Koz} for instance). 

\begin{proof}
We proceed by induction on $N$. If $N=1$, $K=T_0^n=\Delta_n,$ so that $K$ has a regular tiling and is homeomorphic to a ball.
Let us suppose that the result holds true for $N$ and that $K$ has a filtration $K_1\subset \ldots \subset K_N\subset K_{N+1}=K$.
Then, $K_{N+1}=K_N\cup T$ where $T\in \{T_0^n,\ldots, T_{n+1}^n\}$, and $K_N$ is regular by the hypothesis. If $K_N$ and $K_{N+1}$ do not have the same number of connected components, then $T$ is a connected component   of $K$ so that   $T=T_0^n$ as in the case $N=1$ and $K_{N+1}$ is regular. 
Moreover, $K_N$ being homotopy  equivalent to a union of  $n$-dimensional spheres or balls,  so is $K_{N+1}$, with one more ball component. Otherwise, $K_N$ and $K_{N+1}$ have the same number of  connected components. In this case, $T$ is glued to one of the connected components of $K_N$  and thus $T\neq T_0^n$.  Hence, $K_{N+1}$ is regular, which proves the first part. Moreover, if $T=T^n_{n+1}=\stackrel{\circ}{\Delta}$, then since $K_N$ is the union of spheres or balls so is $K_{N+1}$, a homotopy ball of $K_N$ becoming a homotopy sphere. If $T\neq T^n_{n+1}$, the boundary of $T$ is not empty so that $T$ intersects the boundary of $K_{N+1}$. In this case, $K_{N}$ is a deformation retract of $K_{N+1}$ and is homotopy equivalent to $K_{N+1}.$  Hence the result.
\end{proof}

\begin{defn} 
Let $K$ be an $n$-dimensional finite simplicial complex equipped with a tiling $\tT$. For every $i\in \{0,\ldots,n+1\}$, let $h_i(\tT)$ be the number of tiles $T_i^n$ of $\tT.$ The vector $(h_0(\tT),\ldots, h_{n+1}(\tT))$ is called the $h$-vector  of $\tT$ and the polynomial
$h_{\tT}(X)=\sum_{i=0}^{n+1}h_i(\tT)X^i$  its $h$-polynomial.
\end{defn}

The following Theorem~\ref{Thm_fvsh} and Corollary~\ref{Cor_hvect} complete Theorem~\ref{Thm_hvTile}. 
\begin{thm}\label{Thm_fvsh}
Let $K$ be a tileable $n$-dimensional finite simplicial complex. For every tiling $\tT$ of $K$,  its $h$-polynomial satisfies  $\sum_{i=0}^{n+1}h_i(\tT)X^{n+1-i}=\sum_{i=0}^{n+1}f_{i-1}(K)(X-1)^{n+1-i}$ provided $f_{-1}(K)$ is chosen to be equal to $h_0(\tT).$  In particular, two tilings $\tT$ and $\tT'$ have the same $h$-polynomial if  and only if $h_0(\tT)=h_0(\tT').$
\end{thm}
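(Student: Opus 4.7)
The plan is to compute $f_j(K)$ for each $j$ by using the fact that the tiling $\tT$ partitions $K$ into open tiles, and then substitute into the right-hand side and apply the binomial theorem. Since the tiles $T_s^n$ of $\tT$ form a disjoint cover of $|K|$ by open simplices of various dimensions, every open $j$-simplex of $K$ lies in exactly one tile, so
\[
f_j(K)=\sum_{s=0}^{n+1}h_s(\tT)\,f_j(T_s^n).
\]
By Proposition~\ref{Prop_fi}, $f_j(T_s^n)=\binom{n+1-s}{n-j}$ for $s-1\leq j\leq n$ and vanishes otherwise, so the sum runs effectively over $s\in\{0,\ldots,j+1\}$. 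Moreover, with the convention $f_{-1}(K)=h_0(\tT)$, this identity extends to $j=-1$ since $\binom{n+1-s}{n+1}$ is $1$ if $s=0$ and $0$ otherwise.

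Plugging $f_{i-1}(K)=\sum_{s=0}^{i}h_s(\tT)\binom{n+1-s}{n+1-i}$ into the right-hand side and interchanging the order of summation gives
\[
\sum_{i=0}^{n+1}f_{i-1}(K)(X-1)^{n+1-i}=\sum_{s=0}^{n+1}h_s(\tT)\sum_{i=s}^{n+1}\binom{n+1-s}{n+1-i}(X-1)^{n+1-i}.
\]
After the change of variable $k=n+1-i$, the inner sum becomes $\sum_{k=0}^{n+1-s}\binom{n+1-s}{k}(X-1)^k=X^{n+1-s}$ by the binomial theorem, yielding exactly the left-hand side of the stated identity.

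For the ``in particular'' clause, observe that for $i\geq 1$ the coefficient $f_{i-1}(K)$ is an invariant of $K$, independent of the tiling, while the coefficient at $i=0$ is, by our convention, $h_0(\tT)$. Thus if two tilings $\tT$ and $\tT'$ of $K$ satisfy $h_0(\tT)=h_0(\tT')$, the right-hand sides for $\tT$ and $\tT'$ agree term-by-term, which forces the two $h$-polynomials to coincide. The converse direction is immediate by reading off the coefficient of $X^{n+1}$. The main (minor) obstacle is verifying the boundary case $j=-1$ cleanly, which is what makes the convention $f_{-1}(K)=h_0(\tT)$ both necessary and natural: it records exactly how many closed-simplex tiles $T_0^n$ are used and allows a single uniform statement.
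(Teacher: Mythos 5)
Your proof is correct and follows essentially the same route as the paper: both rest on the decomposition $f_j(K)=\sum_s h_s(\tT)f_j(T_s^n)$ coming from the disjoint cover by tiles together with Proposition~\ref{Prop_fi}, followed by an interchange of summation and the binomial theorem (the paper phrases this via the face generating polynomial in a variable $T$ and the substitution $T=\frac{1}{X-1}$, whereas you expand the right-hand side directly, which is the same computation). Your handling of the $j=-1$ convention and of the ``in particular'' clause matches the paper's intent.
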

\begin{proof}
Let $\tT$ be a tiling of $K$. From Proposition~\ref{Prop_fi}, 
$$\begin{array}{rcl}
\sum_{j=0}^{n+1}f_{j-1}(K)T^j&=&\sum_{j=0}^{n+1}T^j\sum_{s=0}^{^j}\binom{n+1-s}{j-s}h_s(\tT)\\
&=&\sum_{k=0}^{n+1} h_{k}(\tT)\sum_{j=s}^{n+1}\binom{n+1-s}{j-s}T^j\\
&=&\sum_{s=0}^{n+1}h_s(\tT)\sum_{j=0}^{n+1-s}\binom{n+1-s}{j} T^{n+1-j}\\
&=&T^{n+1} \sum_{s=0}^{n+1} h_s(\tT)(1+\frac{1}{T})^{n+1-s},
\end{array}
$$
where the second line follows from Proposition~\ref{Prop_fi} and the convention $f_{-1}(K)=h_0(K)$.

By letting $X=\frac{T+1}{T}$ or equivalently $T=\frac{1}{X-1}$, we get $$\sum_{s=0}^{n+1}h_s(\tT)X^{n+1-s}=\sum_{j=0}^{n+1}f_{j-1}(K)(X-1)^{n+1-j}.$$ Hence the result.
\end{proof}

\begin{cor}\label{Cor_hvect}
Let $K$ be a connected $n$-dimensional finite simplicial complex  equipped with a regular tiling $\tT.$ Then, the $h$-vector of $\tT$ coincides with the $h$-vector of $K$.
\end{cor}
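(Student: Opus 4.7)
The plan is to observe that Corollary~\ref{Cor_hvect} is essentially an immediate consequence of Theorem~\ref{Thm_fvsh} once one identifies $h_0(\tT)$ with the standard convention $f_{-1}(K)=1$. First I would record that, because $K$ is connected and $\tT$ is regular, the definition of regularity forces exactly one tile $T_0^n$ to appear in $\tT$, so $h_0(\tT)=1$. This is the single input needed to align the hypothesis of Theorem~\ref{Thm_fvsh} with the usual combinatorial definition of the $h$-vector of a simplicial complex.

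Next I would recall the standard definition of the $h$-vector of a (pure) $n$-dimensional simplicial complex $K$: it is the sequence $(h_0(K),\dots,h_{n+1}(K))$ determined by the identity
\begin{equation*}
\sum_{i=0}^{n+1} h_i(K) X^{n+1-i} \;=\; \sum_{i=0}^{n+1} f_{i-1}(K)\,(X-1)^{n+1-i},
\end{equation*}
with the convention $f_{-1}(K)=1$. Applying Theorem~\ref{Thm_fvsh} to the tiling $\tT$ yields the identical right-hand side, precisely because our choice $h_0(\tT)=1$ now matches the convention $f_{-1}(K)=1$. Comparing coefficients of $X^{n+1-i}$ in the two polynomial identities gives $h_i(\tT)=h_i(K)$ for every $i\in\{0,\dots,n+1\}$.

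There is no real obstacle here; the content of the corollary is packaged into Theorem~\ref{Thm_fvsh}, and the only thing to check is the normalization. The sole step that merits a line of justification is that regularity plus connectedness forces $h_0(\tT)=1$ (as opposed to, say, $h_0(\tT)$ equal to the number of connected components in the general case). Once this is noted, the proof consists of a single invocation of Theorem~\ref{Thm_fvsh} followed by coefficient comparison.
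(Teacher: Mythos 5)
Your proposal is correct and matches the paper's own proof: both note that connectedness plus regularity gives $h_0(\tT)=1$, then invoke Theorem~\ref{Thm_fvsh} with the convention $f_{-1}(K)=1$ and compare with the defining relation of the $h$-vector of $K$. No further comment is needed.
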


\begin{proof} Since $K$ is connected and $\tT$ regular, $h_0(\tT)=1$. But the $h$-polynomial of $K$ is by definition the polynomial  satisfying the relation in Theorem~\ref{Thm_fvsh} with $f_{-1}=1$.  Hence the result.
\end{proof}

\begin{rem} Corollary~\ref{Cor_hvect} provides a geometric  interpretation for  the  $h$-vector of tiled simplicial complexes. It is similar to the known one for shellable complexes.
\end{rem}

\begin{cor}\label{Cor_Chi}
Let $\tT$ be a tiling of  an $n$-dimensional finite simplicial complex $K$. Then $\chi(K)=h_0(\tT)+(-1)^nh_{n+1}(\tT).$
\end{cor}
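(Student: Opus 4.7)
The plan is to derive this as an immediate corollary of Theorem~\ref{Thm_fvsh} by specializing the identity
$$\sum_{i=0}^{n+1}h_i(\tT)X^{n+1-i}=\sum_{i=0}^{n+1}f_{i-1}(K)(X-1)^{n+1-i}$$
at the value $X=0$, with the convention $f_{-1}(K)=h_0(\tT)$.

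First I would evaluate the left-hand side at $X=0$: every term $X^{n+1-i}$ vanishes except the one with $i=n+1$, leaving precisely $h_{n+1}(\tT)$. Next I would evaluate the right-hand side at $X=0$: this gives $\sum_{i=0}^{n+1}f_{i-1}(K)(-1)^{n+1-i}$, which after the change of index $j=i-1$ and pulling out a global $(-1)^n$ becomes $(-1)^n\bigl[-f_{-1}(K)+\sum_{j=0}^{n}(-1)^j f_j(K)\bigr]=(-1)^n[\chi(K)-h_0(\tT)]$. Equating the two sides and solving for $\chi(K)$ yields the claimed identity.

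There is no real obstacle here since Theorem~\ref{Thm_fvsh} does all the combinatorial work; the only subtlety is bookkeeping the $f_{-1}(K)=h_0(\tT)$ convention and the sign $(-1)^n$ that comes from reindexing. As a sanity check, one could alternatively prove the formula directly by noting that a tiling partitions the open simplices of $K$ into the tiles, so $f_j(K)=\sum_{s=0}^{n+1}h_s(\tT)\binom{n+1-s}{n-j}$ by Proposition~\ref{Prop_fi}; substituting this into $\chi(K)=\sum_{j=0}^n(-1)^j f_j(K)$ and applying the binomial identity $\sum_{k}(-1)^k\binom{N}{k}=0$ for $N>0$ collapses the sum to the contributions from $s=0$ and $s=n+1$, again yielding $\chi(K)=h_0(\tT)+(-1)^n h_{n+1}(\tT)$. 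Either route is short; I would present the $X=0$ specialization as the main proof since it is the cleanest.
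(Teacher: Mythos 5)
Your proposal is correct and coincides with the paper's own proof: the paper also specializes the identity of Theorem~\ref{Thm_fvsh} at $X=0$ with the convention $f_{-1}(K)=h_0(\tT)$ and uses $\chi(K)=\sum_{i=0}^{n}(-1)^if_i(K)$ to conclude. The sign bookkeeping in your reindexing is right, so nothing further is needed.
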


\begin{proof}
 From Theorem~\ref{Thm_fvsh}, $\sum_{i=0}^{n+1}h_i(\tT)X^{n+1-i}=\sum_{i=0}^{n+1}f_{i-1}(K)(X-1)^{n+1-i}$ with $f_{-1}=h_0(\tT)$. By letting $X=0$ we get $h_{n+1}(\tT)=(-1)^{n+1}h_0(\tT)+(-1)^n\chi(K)$ as $\chi(K)=\sum_{i=0}^{n} (-1)^if_i(K).$ Hence the result.
\end{proof}

The Euler characteristic  of an even dimensional finite tiled simplicial complex is thus positive.
Corollary~\ref{Cor_dDelta}  provides a tiling  of spheres in any dimensions and Example~5 a tileable triangulation of a two-torus. Which three-manifolds possess tileable triangulations?

\subsection{Barycentric subdivision}

Recall that for every $n>0$ and $s\in\{0,\ldots, n+1\},$
$T^n_s=\Delta_n\setminus (\sigma_1\cup \ldots\cup \sigma_s)$ where $\sigma_i$ denotes a facet of $\Delta_n.$
We set $\Sd(T_s^n)=\Sd(\Delta_n)\setminus \cup_{i=1}^s\Sd({\sigma_i})$. The remaining part of the paper rely on the following key result.

\begin{thm}\label{Thm_SdPieces}
For every  $n>0$ and every $s\in\{0,\ldots, n+1\}$, $\Sd(T_s^n)$ is tileable.
Moreover, it can be tiled in such a way that only $\Sd(T_0^{n})$ (resp. $\Sd(T_{n+1}^n)$) contains the tile $T_0^{n}$ (resp. $T_{n+1}^n$) in its tiling and it contains exactly one such tile.
\end{thm}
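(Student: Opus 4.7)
The plan is to induct on $n$, and in each step build every tiling of $\Sd(T_s^n)$ by restricting a single ``master'' tiling of $\Sd(\Delta_n)$. For $n=1$, take the partition of $\Sd(\Delta_1)$ as $[v_0,\hat\Delta_1]\sqcup(\hat\Delta_1,v_1]$ (one $T_0^1$ and one $T_1^1$); successively removing the facets $v_0$ and then $v_1$ restricts this to two copies of $T_1^1$ (tiling $\Sd(T_1^1)$) and then to one $T_1^1$ plus one $T_2^1$ (tiling $\Sd(T_2^1)$), which yields all three required tilings.

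For the inductive step, assume the result in dimension $n-1$ and use the join decomposition $\Sd(\Delta_n)=\hat\Delta_n\ast\Sd(\partial\Delta_n)$. The recursive construction behind Corollary~\ref{Cor_dDelta} provides an ordering $\sigma_1,\ldots,\sigma_{n+1}$ of the facets of $\Delta_n$ for which $\partial\Delta_n=\bigsqcup_{j=0}^n T_j^{n-1}$ with $T_j^{n-1}\subset\sigma_{j+1}$. Applying the induction hypothesis to each $\Sd(T_j^{n-1})$ yields a tiling of $\Sd(\partial\Delta_n)$ whose unique $T_0^{n-1}$ tile lies in $\Sd(\sigma_1)$ and whose unique $T_n^{n-1}$ tile lies in $\Sd(\sigma_{n+1})$.

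Next I cone this boundary tiling to $\hat\Delta_n$. Each base tile $\tau$ occupies one top $(n-1)$-simplex of $\Sd(\partial\Delta_n)$ and hence determines a top $n$-simplex $\theta_\tau=\hat\Delta_n\ast\tau$ of $\Sd(\Delta_n)$. By Proposition~\ref{Prop_Pieces}, the closed cone $\hat\Delta_n\ast T_0^{n-1}$ equals $T_0^n$, while for $j\geq 1$ the cone $\hat\Delta_n\ast T_j^{n-1}$ deprived of $\hat\Delta_n$ equals $T_j^n$. Assigning the barycenter only to the unique $T_0^{n-1}$ piece therefore produces a tiling of $\Sd(\Delta_n)=\Sd(T_0^n)$ containing exactly one $T_0^n$ and, importantly, no $T_{n+1}^n$ (the unique $T_n^{n-1}$ base cones to $T_n^n$). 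I declare this the master tiling, and for $s\in\{1,\ldots,n+1\}$ take the tiling of $\Sd(T_s^n)=\Sd(\Delta_n)\setminus\bigcup_{i=1}^s\Sd(\sigma_i)$ to be its restriction. A tile inside $\theta_\tau$ is unchanged unless $\tau\subset\sigma_i$ for some $i\leq s$; in that case the tile loses only its base facet and the type changes from $T_j^n$ to $T_{j+1}^n$ (well defined as $j\leq n$). Because the $T_0^n$ tile sits above $\sigma_1$ and the $T_n^n$ tile above $\sigma_{n+1}$, a $T_0^n$ survives exactly when $s=0$ and a $T_{n+1}^n$ appears exactly when $s=n+1$, each with multiplicity one.

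The main obstacle I anticipate is the combinatorial bookkeeping: verifying that the labelling $T_j^{n-1}\subset\sigma_{j+1}$ coming from Corollary~\ref{Cor_dDelta} really does place $T_0^{n-1}$ and $T_n^{n-1}$ on distinct facets (required for $n\geq 2$), and that restricting the master tiling by removing $\Sd(\sigma_i)$ can only strip away the single base facet of each affected $\theta_\tau$ rather than produce a nonstandard piece. Both points reduce to careful use of the ``cone deprived of its center'' identity of Proposition~\ref{Prop_Pieces}.
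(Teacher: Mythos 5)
Your proposal is correct and follows essentially the same route as the paper: induction on the dimension, tiling $\Sd(\partial\Delta_n)$ via Corollary~\ref{Cor_dDelta} and the induction hypothesis, coning the boundary tiles to the barycenter with the center assigned only to the unique $T_0^{n-1}$ tile, and then obtaining the tilings of $\Sd(T_s^n)$, $s\geq 1$, by stripping the bases of the affected cones using Proposition~\ref{Prop_Pieces}. The bookkeeping points you flag (the $T_0^{n-1}$ and $T_n^{n-1}$ tiles lying over distinct facets, and each affected cone losing exactly its base) are handled in the paper exactly as you suggest, so there is no gap.
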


\begin{proof} 
We proceed by induction on the dimension $n>0$. If $n=1$, the tilings $\Sd(T_0^1)=T_0^1\sqcup T_1^1$, $\Sd(T_1^1)=2T_1^1$ and $\Sd(T_2^1)=T_1^1\sqcup T_2^1$ are suitable,  see Figure~\ref{Fig_Sdtile}.

 \begin{figure}[h]
   \begin{center}
    \includegraphics[scale=0.5]{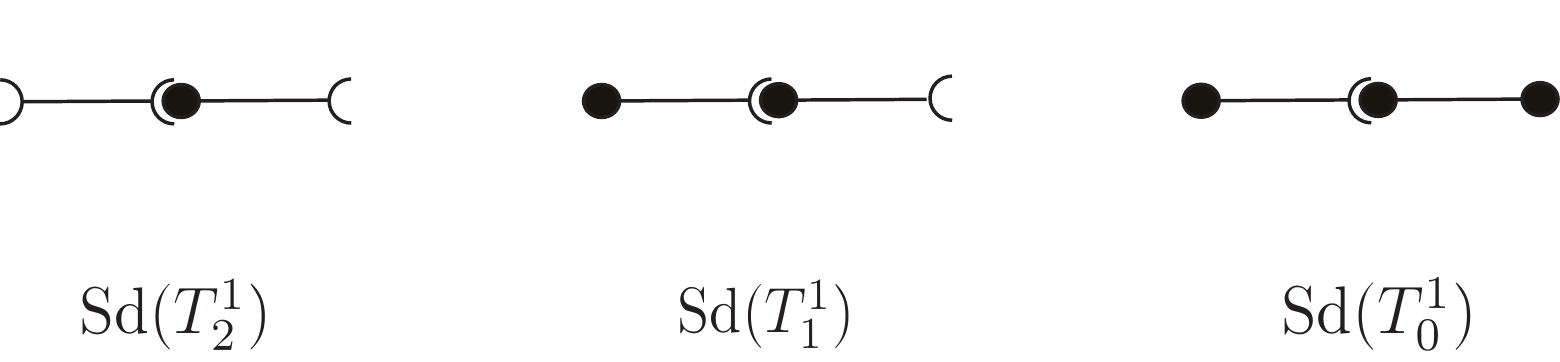}
    \caption{Tilings of subdivided one-dimensional tiles.}
    \label{Fig_Sdtile}
      \end{center}
 \end{figure}

Now, let us assume that the result holds true for $r\leq n$ and let us prove it for $r=n+1$.  From Corollary~\ref{Cor_dDelta}, $\partial \Delta_{n+1}$ has a tiling $\bigsqcup_{s=0}^{n+1}T_s^n$. We equip $\Sd(\partial \Delta_{n+1})=\bigsqcup_{s=0}^{n+1}\Sd(T_s^n)$  with the regular tiling given by the induction hypothesis. Then, $\Sd(\Delta_{n+1})$ gets a partition by cones over the tiles of $\Sd(\partial \Delta_{n+1})$ centered at the barycenter of $\Delta_{n+1}$ where all the cones except the one over $T_0^n$  are deprived of their center. From Proposition~\ref{Prop_Pieces},   this partition induces a regular tiling of $\Sd(\Delta_{n+1})=T_0^{n+1}$. For every $s\in \{1, \ldots, n+2\},$ we equip $\Sd(T_s^{n+1})=\Sd(\Delta_{n+1})\setminus \sqcup_{j=0}^{s-1}\Sd(T_j^n)$ with the tiling induced by  removing the bases of all the cones over the tiles
 $\bigcup_{j=0}^{s-1}T_j^n\subset \Sd(\partial \Delta_{n+1}).$
From Proposition~\ref{Prop_Pieces},  these cones deprived of their bases are tiles so that we get as well a tiling of $\Sd(T_s^{n+1})$. Moreover, when $s>0$,  the cone over the unique tile $T_0^n$ of the tiling $\Sd(\partial \Delta_{n+1})$ is deprived of its basis, so that the tiling we get  does not contain  $T_0^{n+1}.$  Finally, by the induction hypothesis the tiling of $\Sd(\partial \Delta_{n+1})$ contains a unique tile $T_{n+1}^n$ which is contained in the tiling of $\Sd(T_{n+1}^n)\subset \Sd(\partial \Delta_{n+1}).$  Thus, the tiling of $\Sd(T_s^{n+1})$ contains  the tile $T_{n+2}^{n+1}$ only when $s=n+2$ and in this case it contains only one such tile, since from Proposition~\ref{Prop_Pieces}, $T_{n+2}^{n+1}$ is the cone over $T_{n+1}^{n}$ deprived of its base and its center. Hence the result. 
\end{proof}

The proof of Theorem~\ref{Thm_SdPieces} provides a tiling of  all the subdivided tiles $\Sd(T_s^n)$, $s\in\{0,\ldots, n+1\}$,  in any dimension $n$. Let $H_n$ be the $(n+2)\times(n+2)$ matrix whose $(s+1)$-{st} row is the $h$-vector of the tiling of $\Sd(T_s^n)$.  It follows from the proof of Theorem~\ref{Thm_SdPieces} that the  $s$-{th} row of $H_{n+1}$ is obtained by adding the first $s-1$ rows of $H_n$ shifted by one step to the right  to the $n+3-s$ last rows of $H_n.$

\begin{ex} The matrices $H_n$ for  $n\leq 5$ are the following.

$H_1=\left(\begin{array}{ccc}1&1&0\\0&2&0\\0&1&1\end{array}\right)$,
 $H_2=\left(\begin{array}{cccc}1&4&1&0\\0&4&2&0\\0&2&4&0\\0&1&4&1\end{array}\right)$,
 $H_3=\left(\begin{array}{ccccc}1&11&11&1&0\\0&8&14&2&0\\0&4&16&4&0\\0&2&14&8&0\\0&1&11&11&1\end{array}\right)$,

 $H_4=\left(\begin{array}{cccccc}1&26&66&26&1&0\\0&16&66&36&2&0\\0&8&60&48&4&0\\0&4&48&60&8&0\\0&2&36&66&16&0\\0&1&26&66&26&1\end{array}\right),$
$H_5=\left(\begin{array}{ccccccc}1&57&302&302&57&1&0\\0&32&262&342&82&2&0\\0&16&212&372&116&4&0\\0&8&160&384&160&8&0\\0&4&116&372&212&16&0\\0&2&82&342&262&32&0\\0&1&57&302&302&57&1\end{array}\right).$
\end{ex}

We thus set $H_0=\left(\begin{array}{cc}1&0\\0&1\end{array}\right)$, but do not consider the case $n=0$ throughout the paper.

\begin{cor}\label{Cor_SdK}
Every tiling $\tT$ (resp. every regular tiling) of a finite simplicial complex $K$ induces a tiling $\Sd(\tT)$ (resp. a regular tiling) on $\Sd(K)$. Moreover,
$h(\Sd(\tT))=h(\tT)^t H_n$.  $\square$
 \end{cor}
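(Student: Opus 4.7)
The plan is to build $\Sd(\tT)$ piecewise by barycentrically subdividing each tile of $\tT$ according to Theorem~\ref{Thm_SdPieces}, and then to read off the $h$-vector identity from the very definition of $H_n$. Concretely, I would enumerate the tiles $T_1,\ldots,T_N$ of $\tT$; each $T_j$ sits inside some closed $n$-simplex $\Delta^{(j)}$ of $K$ and is identified with some $T_{s_j}^n$. The $T_j$'s are pairwise disjoint and partition $|K|$. Applying Theorem~\ref{Thm_SdPieces} to each $T_j$ produces a canonical tiling of $\Sd(T_j)$ by tiles of $\Sd(\Delta^{(j)})$, which are in particular tiles of $\Sd(K)$. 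Concatenating these local tilings partitions $|\Sd(K)|=|K|$ into a disjoint union of tiles of $\Sd(K)$; this is the induced tiling $\Sd(\tT)$.

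For regularity, Theorem~\ref{Thm_SdPieces} guarantees that $T_0^n$ appears exactly once in the canonical tiling of $\Sd(T_0^n)$ and does not appear at all in the canonical tiling of $\Sd(T_s^n)$ for $s\geq 1$. Hence the total number of $T_0^n$-tiles in $\Sd(\tT)$ equals $h_0(\tT)$; since $K$ and $\Sd(K)$ have the same connected components, $\Sd(\tT)$ is regular whenever $\tT$ is.

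For the $h$-vector identity, recall that the $(s+1)$-st row of $H_n$ is, by definition, the $h$-vector of the canonical tiling of $\Sd(T_s^n)$ from Theorem~\ref{Thm_SdPieces}. Counting the tiles of $\Sd(\tT)$ by type then yields
$$h_r(\Sd(\tT)) \;=\; \sum_{s=0}^{n+1} h_s(\tT)\,(H_n)_{s+1,\,r+1} \;=\; \bigl(h(\tT)^{t} H_n\bigr)_{r},$$
which is exactly the claimed matrix identity.

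The main (and really only) point to check carefully is that the local tilings of the various $\Sd(T_j)$ really do assemble into a tiling of the ambient $\Sd(K)$. This reduces to the observation that each piece produced by Theorem~\ref{Thm_SdPieces} is intrinsically a tile of $\Sd(K)$, not merely of $\Sd(\Delta^{(j)})$, so that no incompatibility can occur along the shared boundaries $\partial\Delta^{(j)}$; everything else is bookkeeping once Theorem~\ref{Thm_SdPieces} is in hand.
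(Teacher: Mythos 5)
Your argument is correct and is exactly the one the paper intends: the corollary is stated with an immediate "$\square$" because it follows by subdividing each tile of $\tT$ via the canonical tilings of Theorem~\ref{Thm_SdPieces} (whose $h$-vectors are by definition the rows of $H_n$), observing that these local tilings are tilings by tiles of $\Sd(K)$ and assemble disjointly since the tiles of $\tT$ partition $|K|=|\Sd(K)|$, and noting that $T_0^n$ occurs only, and exactly once, in the subdivision of $T_0^n$, which preserves $h_0$ and hence regularity. Your write-up just makes these routine verifications explicit.
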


Let $\Lambda_n=[\lambda_{ij}]$ be the lower triangular  $(n+2)\times(n+2)$ matrix where $\lambda_{ij}$ denotes the number of $j-2$ interior faces of $\Sd(\Delta_{i-2})$, $1\leq i,j\leq n+2$. We agree that the standard simplex of dimension  $-1$ has a unique face in dimension $-1$. The diagonal entries of $\Lambda_n$ are $(i-1)!$, $i\in \{1,\ldots, n+2\}$, see \cite{BW,DPS, SW1}. Let $F_n$ be the  $(n+2)\times(n+2)$ matrix whose  $s$-th row is the face vector $(f_{-1}(T^n_{s-1}),\ldots, f_n(T^n_{s-1}))$  of $T_{s-1}^{n}$, where we set $f_{-1}(T_s^n)=0$ if $s\neq 0$ and $f_{-1}(T_0^n)=1$.  

\begin{thm}\label{Thm_HFL}
For every $n>0,$ $F_n$ is unipotent upper triangular and $H_nF_n=F_n\Lambda_n$. Moreover, $H_n=[h_{ij}]_{1\leq i,j\leq n+2}$ satisfies the symmetry property $h_{ij}=h_{n+3-i, n+3-j}$.
\end{thm}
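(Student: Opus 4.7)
The plan is to verify the three assertions separately: the structural claim about $F_n$ follows immediately from Proposition~\ref{Prop_fi}, the matrix identity $H_nF_n=F_n\Lambda_n$ follows from counting open simplices of $\Sd(T_s^n)$ in two different ways, and the symmetry of $H_n$ is an induction on $n$ using the row recurrence for $H_{n+1}$ that is derived from the proof of Theorem~\ref{Thm_SdPieces}.

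For $F_n$ being unipotent upper triangular, I would apply Proposition~\ref{Prop_fi} directly to the definition $F_n[i,j]=f_{j-2}(T_{i-1}^n)$. The vanishing $f_j(T_s^n)=0$ for $j<s-1$ gives $F_n[i,j]=0$ whenever $j<i$, while for $j\geq i$ one has $F_n[i,j]=\binom{n+2-i}{n+2-j}$; in particular the diagonal entry is $\binom{n+2-i}{n+2-i}=1$, with the convention $f_{-1}(T_0^n)=1$ covering the $(1,1)$-entry.

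For the matrix identity $H_nF_n=F_n\Lambda_n$, I would fix $s\in\{0,\ldots,n+1\}$ and $j\in\{-1,\ldots,n\}$ and show that both sides, at position $(s+1,j+2)$, equal $f_j(\Sd(T_s^n))$. On the left, summing over the tiles of the tiling $\tT_s$ of $\Sd(T_s^n)$ from Theorem~\ref{Thm_SdPieces}, each copy of $T_k^n$ contributes $f_j(T_k^n)$ open $j$-simplices, giving $f_j(\Sd(T_s^n))=\sum_k h_k(\tT_s)\,f_j(T_k^n)=(H_nF_n)[s+1,j+2]$. On the right, I would partition the open $j$-simplices of $\Sd(T_s^n)\subset\Sd(\Delta_n)$ according to the unique open simplex $\tau$ of $\Delta_n$ in which each lies: for every open $l$-simplex $\tau\subset T_s^n$, the open $j$-simplices of $\Sd(\Delta_n)$ lying in $\tau$ are exactly the interior $j$-faces of $\Sd(\bar{\tau})\cong\Sd(\Delta_l)$, numbering $\Lambda_n[l+2,j+2]$ by definition of $\Lambda_n$. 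Summing over $l$ and $\tau$ yields $\sum_l f_l(T_s^n)\,\Lambda_n[l+2,j+2]=(F_n\Lambda_n)[s+1,j+2]$.

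For the symmetry $h_{ij}=h_{n+3-i,n+3-j}$, I would induct on $n$, with base case $n=1$ (or $H_0=I_2$) checked by inspection. Writing $r_{ij}$ for the entries of $H_n$ and $h'_{sk}$ for those of $H_{n+1}$, the row recurrence recorded in the discussion preceding the theorem reads
\[ h'_{s,k}=\sum_{i=s}^{n+2}r_{i,k}+\sum_{i=1}^{s-1}r_{i,k-1}, \]
with the convention $r_{i,0}=r_{i,n+3}=0$. Evaluating at $(n+4-s,n+4-k)$, applying the inductive symmetry $r_{i,j}=r_{n+3-i,n+3-j}$ inside each summand, and changing variables $i'=n+3-i$ in both sums turns the two ranges of summation into $\sum_{i'=1}^{s-1}r_{i',k-1}$ and $\sum_{i'=s}^{n+2}r_{i',k}$ respectively, whose sum is precisely $h'_{s,k}$.

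The main obstacle will be bookkeeping rather than anything conceptual: ensuring that the three boundary conventions in play ($f_{-1}(T_0^n)=1$, the unique $(-1)$-face of $\Sd(\Delta_{-1})$, and the zero-padding $r_{i,0}=r_{i,n+3}=0$ in the row recurrence) are consistently used across all three arguments, and that the symmetry computation absorbs the degenerate cases $k\in\{1,n+3\}$ correctly via the convention that relates $r_{i,0}$ and $r_{n+3-i,n+3}$.
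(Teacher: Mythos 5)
Your proposal is correct and follows essentially the same route as the paper: $F_n$ is read off from Proposition~\ref{Prop_fi}, the identity $H_nF_n=F_n\Lambda_n$ is proved by identifying both $s$-th rows with the face vector of $\Sd(T_{s-1}^n)$ (you just make the two counting arguments more explicit than the paper does), and the symmetry is the same induction on $n$ via the row recurrence for $H_{n+1}$, with the same index change $i\mapsto n+3-i$ and the same zero-padding conventions at the boundary columns.
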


\begin{proof}
It follows from Proposition~\ref{Prop_fi} that $F_n=[\binom{n+2-i}{n+2-j}]_{1\leq i,j\leq n+2}$, thus it is a unipotent upper triangular matrix.
Now, by definition of $H_n$ and $F_n$, the  $s$-th row of the product $H_nF_n$
is the face vector of $\Sd(T_{s-1}^n)$, $s\in \{1,\ldots,n+2\}.$ Likewise, by definition of $F_n$ and $\Lambda_n$, the  $s$-th row of the product $F_n\Lambda_n$
is the face vector of $\Sd(T_{s-1}^n)$, $s\in \{1,\ldots,n+2\}.$  Thus, the two products coincide. 

Finally, to prove the symmetry property of $H_n$ we proceed by induction. The matrix $H_1$ satisfies this symmetry.  Let us now assume that  $H_r$ satisfies the symmetry property for every $r\leq n$. By definition, the  $s$-th row of $H_{n+1}$ is obtained by adding up the first $s-1$ rows of $H_n$ shifted once to the right with its $n+3-s$ last rows. 
We deduce that for every  $1\leq s,j\leq n+3,$

$$\begin{array}{rcl}
h^{n+1}_{sj}&=&\sum_{i=1}^{s-1}h_{i j-1}^n+\sum_{i=s}^{n+2} h_{ij}^n\\
&=&\sum_{i=1}^{s-1} h^n_{n+3-i, n+4-j} +\sum_{i=s}^{n+2}h^n_{n+3-i, n+3-j},
\end{array}$$
since $H_n$ satisfies the symmetry property by induction hypothesis. Thus,
$$\begin{array}{rcl}
h^{n+1}_{sj}&=&\sum_{i=n+4-s}^{n+2} h^n_{i,n+4-j}+\sum_{i=1}^{n+3-s} h^n_{i,n+3-j}\\
&=&h^{n+1}_{n+4-s, n+4-j}.
\end{array}$$
Hence the result.
\end{proof}

Let $\rho_n$ be the involution $(h_0,\ldots, h_{n+1})\in \R^{n+2}\mapsto (h_{n+1},\ldots,h_0)\in \R^{n+2}.$ The symmetry property given by Theorem~\ref{Thm_HFL}
means that the endomorphism $H_n$ of $\R^{n+2}$ commutes with  $\rho_n.$

\begin{cor}\label{Cor_Rho} For every $n>0,$ $H_n$ is diagonalizable with eigenvalues $s!$, $s\in\{0,1,\ldots, n+1\}$. Moreover, the restriction of $\rho_n$  to  the eigenspace of $s!$ is  $(-1)^{n+1-s}\textup{id}$. The vector $(1,1,\ldots,1)=(f_n(T_0^n),\ldots, f_n(T^n_{n+1}))$  spans the eigenspace of $H_n$ associated to the eigenvalue $(n+1)!.$ 
\end{cor}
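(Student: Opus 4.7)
My approach proceeds in three main steps. First, I invoke the identity $H_n F_n = F_n \Lambda_n$ from Theorem~\ref{Thm_HFL}; since $F_n$ is unipotent upper triangular, it is invertible, so $H_n$ is similar to $\Lambda_n$. As $\Lambda_n$ is lower triangular with diagonal entries $(0!, 1!, 2!, \ldots, (n+1)!)$, the characteristic polynomial of $H_n$ equals $\prod_{s=0}^{n+1}(X - s!)$. The only coincidence among the eigenvalues is $0! = 1! = 1$, of algebraic multiplicity two, so diagonalizability reduces to showing that the geometric multiplicity of the eigenvalue $1$ also equals two. I examine $\Lambda_n - I$: its diagonal reads $(0, 0, 2!-1, 3!-1, \ldots, (n+1)!-1)$, whose last $n$ entries are nonzero, and by convention $\lambda_{2,1} = 0$ (the point $\Sd(\Delta_0)$ has no interior $(-1)$-face). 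Hence $\mathrm{rank}(\Lambda_n - I) = n$, its kernel has dimension two, and $H_n$ is diagonalizable with the claimed eigenvalues.

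Second, I identify the $(n+1)!$-eigenvector. A direct computation shows $(H_n \mathbf{1})_s = \sum_j h^n_{s,j}$ equals the total number of tiles in the tiling of $\Sd(T^n_{s-1})$ supplied by Theorem~\ref{Thm_SdPieces}. Because every $n$-dimensional tile satisfies $f_n(T^n_j) = 1$ by Proposition~\ref{Prop_fi}, this sum coincides with $f_n(\Sd(T^n_{s-1}))$; and since every $n$-simplex of $\Sd(\Delta_n)$ contains the barycenter $\hat{\Delta}_n \in \stackrel{\circ}{\Delta}_n$ as a vertex, none is destroyed by removing the open facets' subdivisions, so $f_n(\Sd(T^n_{s-1})) = (n+1)!$ for every $s$. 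Hence $H_n \mathbf{1} = (n+1)!\,\mathbf{1}$, and $\rho_n \mathbf{1} = \mathbf{1}$ yields the sign $+1 = (-1)^{(n+1)-(n+1)}$ for $s = n+1$.

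Finally, the symmetry $h^n_{i,j} = h^n_{n+3-i, n+3-j}$ from Theorem~\ref{Thm_HFL} is equivalent to $\rho_n H_n = H_n \rho_n$, so $\rho_n$ preserves every eigenspace of $H_n$, and the decomposition $\R^{n+2} = V_+ \oplus V_-$ into the $\pm 1$-eigenspaces of $\rho_n$ is $H_n$-invariant. A parity count gives $\dim V_\pm = \#\{s : (-1)^{n+1-s} = \pm 1\}$. Moreover, Theorem~\ref{Thm_SdPieces} forces the first and last columns of $H_n$ to be $e_1$ and $e_{n+2}$ (each tile $T^n_0$ or $T^n_{n+1}$ appears only in the tiling of $\Sd(T^n_0)$ or $\Sd(T^n_{n+1})$, and only once), so the two-dimensional $1$-eigenspace of $H_n$ is $\mathrm{span}(e_1, e_{n+2})$ and decomposes as $\mathrm{span}(e_1 + e_{n+2}) \subset V_+$ and $\mathrm{span}(e_1 - e_{n+2}) \subset V_-$; for each simple eigenvalue $s!$ (with $s \geq 2$), the one-dimensional eigenspace lies entirely in one of $V_\pm$. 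The main obstacle is to pin down, for every $s$, which sign is realized: I would handle this by induction on $n$ via the row-recursion expressing $H_{n+1}$ as shifted and unshifted combinations of the rows of $H_n$ (from the proof of Theorem~\ref{Thm_SdPieces}), carefully tracking how the $\rho$-parity of each eigenvector is transported across the dimensional increment.
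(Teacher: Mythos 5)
The first two steps of your argument are sound and even supply details the paper leaves to citations: diagonalizability follows from $H_nF_n=F_n\Lambda_n$ together with your rank computation for $\Lambda_n-I$ (the claim $\lambda_{21}=0$ is indeed the convention forced by the identity $H_nF_n=F_n\Lambda_n$ itself, since the first column of $H_nF_n$ is $e_1$); the row-sum computation $H_n(1,\ldots,1)^t=(n+1)!\,(1,\ldots,1)^t$ is a legitimate alternative to the paper's observation that $(1,\ldots,1)$ is the last column of $F_n$; and your remark that the first and last columns of $H_n$ are $e_1$ and $e_{n+2}$ neatly identifies the two-dimensional $1$-eigenspace as the span of $e_1$ and $e_{n+2}$, on which $\rho_n$ has signature $0$, settling the sign statement for $s\in\{0,1\}$ and, via $\rho_n(1,\ldots,1)=(1,\ldots,1)$, for $s=n+1$.

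However, the central claim of the corollary --- that $\rho_n$ acts by $(-1)^{n+1-s}\id$ on the eigenspace of $s!$ for \emph{every} $s$ --- is exactly the point you leave open. For the simple eigenvalues $2\le s\le n$ you only observe that each eigenline lies in $V_+$ or $V_-$ and that the dimensions of $V_\pm$ are compatible with the asserted signs; a dimension count cannot decide which sign occurs on which eigenline, since any assignment with the same number of $+$'s and $-$'s passes the same count. Your closing sentence (``I would handle this by induction\ldots carefully tracking how the $\rho$-parity\ldots is transported'') is a plan, not a proof, and it is precisely where the paper's proof does its real work: it writes the $s!$-eigenvectors of $H_n$ as $F_nX_{n,s}$ with $X_{n,s}$ a $\Lambda_n$-eigenvector, uses that $\Lambda_{n+1}$ is lower triangular so $X_{n+1,s}$ extends $X_{n,s}$ by one coordinate, exploits the relations among consecutive rows of $F_{n+1}$ and rows of $F_n$ coming from Proposition~\ref{Prop_Pieces} (namely $T_{s+1}^{n+1}=T_s^{n+1}\setminus T_s^n$), together with the identity $(1,\ldots,1)^tF_nX_{n,s}=0$, valid when $F_nX_{n,s}$ is $\rho_n$-anti-invariant because $(1,\ldots,1)^tF_n$ is the face vector of $\partial\Delta_{n+1}$, to conclude that a $\rho_n$-odd eigenvector for $s!$ yields a $\rho_{n+1}$-even one; the alternation $(-1)^{n+1-s}$ then follows by combining this transport with the signature count. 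Without carrying out an argument of this kind, your proposal establishes the diagonalizability, the $(n+1)!$-eigenvector, and the signs for $s\in\{0,1,n+1\}$, but not the sign statement for $2\le s\le n$, which is the heart of the corollary.
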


\begin{proof}
By Theorem~\ref{Thm_HFL}, $H_n$ is conjugated to the matrix $\Lambda_n$ which is diagonalizable with eigenvalues $s!, s\in \{0,\ldots, n+1\}$, see \cite{BW, DPS, SW1}. The first part follows. Again by Theorem~\ref{Thm_HFL} we know that the last column of $F_n$  is an eigenvector of $H_n$ associated to the eigenvalue $(n+1)!.$ It is the vector  $(f_n(T_0^n),\ldots, f_n(T_{n+1}^n))=(1,\ldots,1)$ which is preserved by $\rho_n.$ The result is thus proved for $n=1$, since 1 is an  eigenvalue of multiplicity two and the signature of $\rho_n$ vanishes on the corresponding eigenspace. We will prove by induction that if $\rho_n$ acts as $-\id$ on the eigenspace associated to the eigenvalue $s!$ of $H_n$, then  $\rho_{n+1}$
acts as $+\id$ on the eigenspace associated to the eigenvalue $s!$ of $H_{n+1}$. The result then follows, as the signature of $\rho_n$ is either 0 or 1 depending on the parity of $n$. Let $s\leq n$ be such that $\rho_n$ acts as $-id$ on the eigenspace associated to the eigenvalue $s!$ of $H_n$. By Theorem~\ref{Thm_HFL}, the eigenvectors associated to this eigenvalue are linear combination of the $n+2-s$ last columns of $F_n$.  Indeed, let $X_{n,s}$ be an eigenvector of $\Lambda_n$  associated to the eigenvalue $s!$, then $F_n X_{n,s}$ is an eigenvector of $H_n$ corresponding to $s!$. By hypothesis, $\rho_n$  acts as $-\id$ on the eigenspace spanned by this vector, so that $(1,\ldots,1)^tF_nX_{n,s}=0$. By definition of $F_n$ and from Corollary~\ref{Cor_dDelta}, $(1,\ldots,1)^tF_n$ is the face vector $f_n$ of $\partial \Delta_{n+1}.$ As $\Lambda_n$ is lower triangular, there exists $x_{n+1}\in \R$ such that $X_{n+1,s}=\left(\begin{array}{c}X_{n,s}\\x_{n+1}\end{array}\right)$. The first row of $F_{n+1}$ is the face vector of $\Delta_{n+1}=T_0^{n+1}$ and thus equals $(f_n^t,1).$  The first coefficient of $F_{n+1}X_{n+1,s}$ is thus $f_n^tX_{n,s} +x_{n+1}=x_{n+1}.$  However, the last row of $F_{n+1}$ is the face vector of $\stackrel{\circ}{\Delta}_n=T_{n+2}^{n+1}$ so that the last coefficient of $F_{n+1}X_{n+1,s}$ is  $x_{n+1}$ as well.  Similarly,  the second row of $F_{n+1}$ is the face vector of $T_1^{n+1}$. From Proposition~\ref{Prop_Pieces}, $T_0^{n+1}=T_1^{n+1}\cup T_0^n$ so that  this second row differs from the first one by the face vector of $T_0^n$. Let us denote by $F_n^i$ the rows of $F_n$, $i\in \{1,\ldots,n+2\}$.  We deduce that  $F_{n+1}^2X_{n+1,s}=F^1_{n+1}X_{n+1,s}-F^1_nX_{n,s}=x_{n+1}-F^1_nX_{n,s}$. Since by Proposition~\ref{Prop_Pieces} we have $T_{n+1}^{n+1}=T_{n+2}^{n+1}\cup T_{n}^{n+1}$, we deduce that $F_{n+1}^{n+2}X_{n+1,s}=F^{n+3}_{n+1}X_{n+1,s}+F^{n+2}_nX_{n,s}=x_{n+1}+F^{n+2}_nX_{n,s}.$ By the induction hypothesis, the eigenvector $F_n X_{n,s}$  is reversed by $\rho_n$ so that $F^1_nX_{n,s}=-F_n^{n+2}X_{n,s}$.  Therefore, the second coefficient of the eigenvector $F_{n+1}X_{n+1,s}$ coincides with its second to last. Proceeding in the same way by induction, we deduce that the $i$-th coefficient of $F_{n+1}X_{n+1,s}$ coincides with the $(n+4-i)$-th one for $i\in\{1,\ldots,n+3\},$ which means $\rho_{n+1}(F_{n+1}X_{n+1,s})=F_{n+1}X_{n+1,s}$. Hence the result.\end{proof}

Let $h^n=(h_0^n,\ldots, h^n_{n+1})$ be the eigenvector of the transposed matrix $H_n^t$ associated to the eigenvalue $(n+1)!$ and normalized in such a way  that   $\sum_{s=0}^{n+1}h^n_s=1$. For every tiled finite simplicial complex $(K,\tT)$ of dimension $n$, we set $|h(\tT)|=\sum_{s=0}^{n}h_s(\tT)$.

\begin{cor}\label{Cor_$h^n$}
For every tiled finite  $n$-dimensional simplicial complex $(K, \tT)$, the sequence $\frac{1}{|h(\tT)|(n+1)!^d}h(\Sd^d(\tT))$ converges to $h^n$ as $d$ grows to $+\infty.$ Moreover, $h_0^n=h^n_{n+1}=0$ and $h^n$ is preserved by the symmetry $\rho_n$. Finally, $\frac{1}{(n+1)!^d}H_n^d$ converges to the matrix $(1,1,\ldots,1)(h^n)^t$ as $d$ grows to $+\infty.$ 
\end{cor}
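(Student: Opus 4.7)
The plan is to reduce every assertion to the spectral decomposition of $H_n$ furnished by Corollary~\ref{Cor_Rho}, combined with the boundary-column information read off from Theorem~\ref{Thm_SdPieces}. Iterating Corollary~\ref{Cor_SdK} gives $h(\Sd^d(\tT))=h(\tT)^t H_n^d$. Multiplying on the left by $h(\tT)^t$ and dividing by $|h(\tT)|$, using $h(\tT)^t \mathbf{1}=|h(\tT)|$ where $\mathbf{1}=(1,\ldots,1)^t$, shows that the first convergence reduces to the matrix convergence $\frac{1}{(n+1)!^d}H_n^d\to \mathbf{1}(h^n)^t$. So it is enough to prove the matrix limit and the two structural properties $h_0^n=h_{n+1}^n=0$ and $\rho_n(h^n)=h^n$.

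For the matrix limit, Corollary~\ref{Cor_Rho} gives eigenvalues $0!,1!,\ldots,(n+1)!$, all of which are at most $n!$ except $(n+1)!$ itself. Thus $(n+1)!$ is simple and strictly dominant. Decomposing $\R^{n+2}=V_{(n+1)!}\oplus U$ along the corresponding eigenspace splitting, $\frac{1}{(n+1)!}H_n$ is the identity on $V_{(n+1)!}=\R\cdot\mathbf{1}$ and has spectral radius $\frac{n!}{(n+1)!}=\frac{1}{n+1}<1$ on $U$, so $\frac{1}{(n+1)!^d}H_n^d$ converges to the projector $P$ onto $V_{(n+1)!}$ along $U$. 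Since the $(n+1)!$-eigenspace $W_{(n+1)!}$ of $H_n^t$ is the line $\R\cdot h^n$, one has $P=\mathbf{1}(h^n)^t/((h^n)^t\mathbf{1})=\mathbf{1}(h^n)^t$ by the normalization $\sum_s h_s^n=1$.

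To prove $h_0^n=h_{n+1}^n=0$, I would invoke Theorem~\ref{Thm_SdPieces}: the tile $T_0^n$ (resp. $T_{n+1}^n$) occurs exactly once in $\Sd(T_0^n)$ (resp. $\Sd(T_{n+1}^n)$) and in no other $\Sd(T_s^n)$. This means the first and last columns of $H_n$ are $e_1$ and $e_{n+2}$, so $H_ne_1=e_1$ and $H_ne_{n+2}=e_{n+2}$, i.e. $e_1,e_{n+2}$ are $1$-eigenvectors of $H_n$, hence lie in $U$. Standard orthogonality of eigenspaces of $H_n$ and $H_n^t$ for distinct eigenvalues shows $W_{(n+1)!}$ annihilates $U$, and in particular $h_0^n=(h^n)^t e_1=0$ and $h_{n+1}^n=(h^n)^t e_{n+2}=0$. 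For $\rho_n(h^n)=h^n$, Theorem~\ref{Thm_HFL} gives $H_n\rho_n=\rho_nH_n$; transposing, and using that $\rho_n$ is symmetric, yields $H_n^t\rho_n=\rho_nH_n^t$, so $\rho_n$ preserves the line $W_{(n+1)!}$ and $\rho_n(h^n)=\pm h^n$. To rule out the $-$ sign, I would apply the already-proved matrix limit to any tiled finite simplicial complex (for example $\partial\Delta_{n+1}$ with its regular tiling provided by Corollary~\ref{Cor_dDelta}), exhibiting $h^n$ as the limit of the non-negative vectors $\frac{h(\Sd^d(\tT))}{|h(\tT)|(n+1)!^d}$ whose entries sum to $1$; so $\sum h_s^n=1>0$, which forbids $\rho_n(h^n)=-h^n$.

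The main subtlety, and the only point that needs careful ordering, is this last sign issue, since the obvious way to fix the sign goes through non-negativity of $h^n$, which in turn uses the limit. My plan circumvents the circularity by first establishing the matrix limit purely from the spectral data and the boundary columns, then deducing non-negativity, and only then pinning down the sign of $\rho_n$. An alternative would be a purely algebraic duality argument showing that $\rho_n$ acts with the same sign on $V_\lambda$ and $W_\lambda$ for any eigenvalue $\lambda$ (the pairing $(v,w)\mapsto w^tv$ between them is non-degenerate and $\rho_n$-invariant, so the signs must agree), combined with Corollary~\ref{Cor_Rho} for $s=n+1$; but the non-negativity route seems cleaner.
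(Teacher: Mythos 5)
Your proof is correct, and its core---the spectral decomposition of $H_n$ with $(n+1)!$ simple and dominant, the identification of the limit projector with $(1,\ldots,1)(h^n)^t$, and the reduction of the first convergence to the matrix limit via $h(\Sd^d(\tT))=h(\tT)^tH_n^d$ from Corollary~\ref{Cor_SdK}---is the same as the paper's. You diverge on the two auxiliary assertions. For $h_0^n=h_{n+1}^n=0$, the paper argues directly from Theorem~\ref{Thm_SdPieces} that the numbers of tiles $T_0^n$ and $T_{n+1}^n$ in $\Sd^d(\tT)$ stay constant in $d$, so their contribution dies after division by $(n+1)!^d$; you instead read off from the same theorem that the first and last columns of $H_n$ are $e_1$ and $e_{n+2}$, hence eigenvectors for the eigenvalue $1\neq(n+1)!$, and conclude by left/right eigenvector orthogonality---a purely spectral variant that is equally valid (and consistent with the displayed matrices $H_1,\ldots,H_5$). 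For $\rho_n(h^n)=h^n$, the paper uses the sign information of Corollary~\ref{Cor_Rho} (namely that $\rho_n$ acts as $+\id$ on the $(n+1)!$-eigenspace, the case $s=n+1$ of $(-1)^{n+1-s}$) transported to $H_n^t$ by the transpose/dual-basis argument, whereas you only use the commutation $H_n\rho_n=\rho_nH_n$ from Theorem~\ref{Thm_HFL} and fix the sign by exhibiting $h^n$ as a limit of non-negative vectors of entry sum $1$; this buys independence from the finer eigenspace-sign computation, at the price of invoking the limit. One remark: the circularity you worry about is not actually there. Since $\rho_n$ merely permutes coordinates, it preserves the entry sum, so $\rho_n(h^n)=-h^n$ would force $\sum_s h_s^n=-\sum_s h_s^n=0$, contradicting the normalization $\sum_s h_s^n=1$; no positivity or limiting argument is needed to rule out the minus sign.
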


\begin{proof}
From Corollary~\ref{Cor_Rho}  we know that $H_n=PDP^{-1}$, where $P$ denotes the matrix of eigenvectors of $H_n$ and $D$ the diagonal  matrix  of eigenvalues $s!, s\in\{0,\ldots,n+1\}$, so that $(P^{-1})^t$ is a matrix of eigenvectors of $H_n^t$. Thus, $\frac{1}{(n+1)!^d}H_n^d=P\frac{D^d}{(n+1)!^d}P^{-1}$ and $\frac{D^d}{(n+1)!^d}$ converges to $\textup{Diag}(0,\ldots,0,1)$ as $d$ grows to $+\infty$, compare \cite{DPS}. Therefore, $\frac{H_n^d}{(n+1)!^d}$ converges to the product $(1,\ldots, 1)^t h$, where $h$ denotes an eigenvector of $H_n^t$ associated to the eigenvalue $(n+1)!$, since $(1,\ldots,1)$ is an eigenvector of $H_n$ associated to the  eigenvalue $(n+1)!$ from Corollary~\ref{Cor_Rho}. For every $d>0$, $(1,\ldots, 1)$  is preserved by $\frac{H_n^d}{(n+1)!^d}$, so that it is also preserved in the limit by $(1,\ldots,1)h^t$. Thus, $|h|=1$ and since the eigenvalue $(n+1)!$ is simple from Corollary~\ref{Cor_Rho}, we deduce that $h=h^n$. From Corollary~\ref{Cor_SdK} we know that for every $d>0,$ $h(\Sd^d(\tT))=h(\tT)^t H_n^d.$  We deduce that $\frac{1}{|h(\tT)|(n+1)!^d}{h(\Sd(\tT))}$ converges to $\frac{1}{|h(\tT)|} h(\tT)^t(1,\ldots,1) {(h^n)}^t=h^n.$  The fact that $h^n$ is preserved by $\rho_n$ follows from Corollary~\ref{Cor_Rho}, since $h^n$ is an eigenvector of $H_n^t$ associated to the eigenvalue $(n+1)!$ and the matrix $J$ of $\rho_n$ is symmetric. Indeed,
Corollary~\ref{Cor_Rho} implies that  $J P=P \textup{Diag}((-1)^{n+2-i})$, so that $P^{-1}J=\textup{Diag}((-1)^{n+2-i}) P^{-1}$ and  $J^t(P^{-1})^t= (P^{-1})^t\textup{Diag}((-1)^{n+2-i}) $. As $J^t=J$ and the last column of $(P^{-1})^t$ is $h^n$, $\rho_n$ fixes $h^n$.  (Another way to see that $\rho_n(h^n)=h^n$ is to consider  a basis $(e_0,\ldots, e_{n+1})$ of eigenvectors of $H_n$. Then the dual basis  $(e^*_0,\ldots, e^*_{n+1})$ is  made of eigenvectors of $H_n^t$. If $\rho_n e_i=\epsilon_i e_i$, then  $\rho_n^t e^*_i=\epsilon_i e^*_i$, with $\epsilon_i \in\{\pm1\}$ and $\rho_n^t=\rho_n$ under canonical identification between $(\R^{n+2})^*$ and $\R^{n+2}.$) Finally, by induction on $d$ we deduce from Theorem~\ref{Thm_SdPieces} that  the number of tiles $T_0^n$ and  $T^n_{n+1}$ which are in the tiling $\Sd(\tT)$ does not depend on $d$ so that  $h_0^n=\lim_{d\to +\infty}\frac{1}{|h(\tT)|(n+1)^d}{h_0(\Sd^d(\tT))}=0$ and  $h_{n+1}^n=\lim_{d\to +\infty}\frac{1}{|h(\tT)|(n+1)^d}{h_n(\Sd^d(\tT))}=0$. \end{proof}

Recall that for every finite $n$-dimensional simplicial complex $K$ with face polynomial $q_K(T)=\sum_{p=0}^{n}f_p(K)T^p,$ the polynomial $\frac{1}{f_n(K)(n+1)!^d}q_{\Sd^d(K)}(T)$ converges to a limit polynomial $q^\infty_n=\sum_{p=0}^{n} q_{p,n}T^p$, see \cite{BW,DPS,SW1}. The first part of Corollary~\ref{Cor_$h^n$} is nothing but this result expressed in terms of $h$-vector and its proof is similar to the one of \cite{DPS}.

\begin{cor} For every $n>0,$ $h^n(X)=(X-1)^n q^n\big(\frac{1}{X-1}\big)$, where $h^n(X)=\sum_{i=0}^nh_i^nX^i.$
\end{cor}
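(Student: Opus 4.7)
The plan is to apply Theorem~\ref{Thm_fvsh} to the iterated barycentric subdivisions $\Sd^d(K)$ of any tileable finite $n$-dimensional simplicial complex $(K,\tT)$, for instance $K=\Delta_n$ with its trivial one-tile tiling, and then pass to the limit as $d\to+\infty$ using Corollary~\ref{Cor_$h^n$} on the left-hand side and the known convergence $\frac{1}{f_n(K)(n+1)!^d}q_{\Sd^d(K)}(T)\to q^n(T)$ on the right-hand side.

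First I would rewrite Theorem~\ref{Thm_fvsh} applied to $\Sd^d(K)$ with the induced tiling $\Sd^d(\tT)$ (provided by Corollary~\ref{Cor_SdK}) by isolating the $i=0$ term of the right-hand sum, so that
$$\sum_{i=0}^{n+1}h_i(\Sd^d(\tT))\,X^{n+1-i}\;=\;h_0(\Sd^d(\tT))(X-1)^{n+1}+(X-1)^n\,q_{\Sd^d(K)}\!\left(\tfrac{1}{X-1}\right),$$
with the convention $f_{-1}(\Sd^d(K))=h_0(\Sd^d(\tT))$, after factoring $(X-1)^n$ from $\sum_{j=0}^{n}f_j(\Sd^d(K))(X-1)^{n-j}$. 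Since each tile $T_s^n$ has exactly one open $n$-face by Proposition~\ref{Prop_fi}, we have $f_n(K)=|h(\tT)|$; I would therefore divide both sides by $|h(\tT)|(n+1)!^d$. Passing to the limit $d\to+\infty$, Corollary~\ref{Cor_$h^n$} gives $\sum_{i=0}^{n+1}h_i^n X^{n+1-i}$ on the left; on the right the first term vanishes, because Theorem~\ref{Thm_SdPieces} implies $h_0(\Sd^d(\tT))=h_0(\tT)$ for every $d$ while the denominator diverges, and the second term converges to $(X-1)^n q^n(1/(X-1))$ by \cite{BW,DPS,SW1}.

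It remains to identify $\sum_{i=0}^{n+1}h_i^n X^{n+1-i}$ with $h^n(X)=\sum_{i=0}^{n}h_i^n X^i$. By Corollary~\ref{Cor_$h^n$}, $h_0^n=h_{n+1}^n=0$ and $\rho_n(h^n)=h^n$, i.e.\ $h_i^n=h_{n+1-i}^n$; reindexing $j=n+1-i$ and applying this symmetry yields
$$\sum_{i=0}^{n+1}h_i^n X^{n+1-i}=\sum_{i=1}^{n}h_i^n X^{n+1-i}=\sum_{j=1}^{n}h_{n+1-j}^n X^{j}=\sum_{j=1}^{n}h_j^n X^{j}=h^n(X),$$
completing the proof. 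There is no real obstacle here: the argument is a bookkeeping exercise assembling Theorem~\ref{Thm_fvsh}, the invariance $h_0(\Sd^d(\tT))=h_0(\tT)$, the normalization and symmetry of $h^n$ from Corollary~\ref{Cor_$h^n$}, and the face-polynomial limit. The only subtlety to watch is the conversion between the ``reversed'' $h$-polynomial that arises naturally in Theorem~\ref{Thm_fvsh} and the direct polynomial $\sum h_i^n X^i$ in the statement, which is exactly what the vanishing of the extremal entries together with the $\rho_n$-symmetry takes care of.
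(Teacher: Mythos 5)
Your proof is correct and follows essentially the same route as the paper: apply Theorem~\ref{Thm_fvsh} to $\Sd^d(\tT)$, divide by $|h(\tT)|(n+1)!^d=f_n(K)(n+1)!^d$, use the constancy $h_0(\Sd^d(\tT))=h_0(\tT)$ from Theorem~\ref{Thm_SdPieces} to kill the $f_{-1}$ term, pass to the limit via Corollary~\ref{Cor_$h^n$} and the face-polynomial convergence, and then convert the reversed polynomial into $h^n(X)$ using $h_0^n=h_{n+1}^n=0$ and $\rho_n(h^n)=h^n$. The only cosmetic difference is that you justify $|h(\tT)|=f_n(K)$ via Proposition~\ref{Prop_fi} rather than reading it off Theorem~\ref{Thm_fvsh}, which is immaterial.
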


\begin{proof} Let $K$ be a finite $n$-dimensional simplicial complex equipped with a tiling $\tT.$ For example, $K=\Delta_n$ being tiled with a single $T_0^n.$ From Theorem~\ref{Thm_fvsh} we know that $|h(\tT)|=f_n(K)$ and deduce $$\frac{1}{|h(\tT)|(n+1)!^d}\sum_{i=0}^{n+1}h_i(\Sd^d(\tT))X^{n+1-i}=\frac{1}{f_n(K)(n+1)!^d}\sum_{i=0}^{n+1}f_{i-1}(\Sd^d(K))(X-1)^{n+1-i}$$ with $f_{-1}(\Sd^d(K))=h_0(\Sd^d(\tT))$. From Theorem~\ref{Thm_SdPieces}, $h_0(\Sd^d(\tT))=h_0(\tT).$ We thus deduce from Corollary~\ref{Cor_$h^n$} and \cite{DPS} by passing to the limit as $d$ grows to $+\infty$ that  $\sum_{i=0}^{n+1}h_i^nX^{n+1-i}=\sum_{i=1}^{n+1}q_{i-1,n}(X-1)^{n+1-i}=(X-1)^nq^n(\frac{1}{X-1})$. The result now follows from Corollary~\ref{Cor_$h^n$}, since $h_0^n=h_{n+1}^n=0$  and $\rho_n(h^n)=h^n$.
\end{proof}

\section{Packings}
\begin{thm}\label{Thm_Packing}
For every $n>0$,  $\Sd(\Delta_n)$ contains a packing of disjoint simplices with one $n$-simplex and for every  $j\in \{0,\ldots, n-1\}$, $2^{n-1-j}$ $j$-simplices. Moreover, for every $s\in\{1,\ldots, n+1\}$, $\Sd(T_s^n)$ contains a packing of disjoint simplices with one $(n+1-s)$-simplex and if $s\leq n-1$, for every $j\in\{0,\ldots,n-1-s\},$ $2^{n-1-s-j}$ simplices of dimension $j.$ The simplex of dimension $n+1-s$ reads $[\hat{\sigma}_{s-1},\ldots, \hat{\sigma}_{m}],$ where for $i\in\{s-1,\ldots, n\}$,
$\sigma_i$ denotes an $i$-simplex of $\Delta_n$.
\end{thm}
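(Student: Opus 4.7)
I proceed by induction on $n$. The base $n=1$ is verified explicitly: for $\Sd(\Delta_1)$ take the edge $[v_0,\hat{\Delta}_1]$ together with the vertex $v_1$; for $\Sd(T_1^1)$ take only the edge $[v_0,\hat{\Delta}_1]$; for $\Sd(T_2^1)$ take only the vertex $\hat{\Delta}_1$. For the inductive step $n\mapsto n+1$, I label the vertices of $\Delta_{n+1}$ as $v_0,\ldots,v_{n+1}$ and realize the tiling $\partial\Delta_{n+1}=\bigsqcup_{s=0}^{n+1}T_s^n$ of Corollary~\ref{Cor_dDelta} by placing $T_s^n$ on the facet $F_s$ opposite to $v_s$ as the locus $\{\lambda_0,\ldots,\lambda_{s-1}>0,\,\lambda_s=0\}$. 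The induction hypothesis gives a packing $P_s$ of $\Sd(T_s^n)\subset\Sd(F_s)$ for each $s\in\{0,\ldots,n+1\}$, and the pairwise disjointness of the tiles $T_s^n$ forces the $P_s$ to be pairwise disjoint inside $\Sd(\partial\Delta_{n+1})$. For $s\geq 1$, the top simplex $[\hat{\sigma}_{s-1},\ldots,\hat{\sigma}_n]$ of $P_s$ lies in $\Sd(T_s^n)$ iff its minimum face $\sigma_{s-1}$ avoids each of the $s$ removed subfacets of $F_s$, which forces $\sigma_{s-1}=[v_0,\ldots,v_{s-1}]$ and $\sigma_n=F_s$; the intermediate faces can be completed canonically as $\sigma_i=[v_0,\ldots,v_{s-1},v_{s+1},\ldots,v_{i+1}]$.

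I then construct the packing of $\Sd(T_r^{n+1})$ for $r\in\{0,\ldots,n+1\}$ by taking $\bigsqcup_{s\geq r}P_s$ (the packings on tiles not absorbed by the removed facets) and replacing the top simplex of $P_r$ by its join with the barycenter $\hat{\Delta}_{n+1}$; this yields the required top simplex $[\hat{\sigma}_{r-1},\ldots,\hat{\sigma}_n,\hat{\Delta}_{n+1}]$ of dimension $n+2-r$, with $\sigma_{n+1}:=\Delta_{n+1}$, while the case $r=n+2$ is handled by the singleton packing $\{\hat{\Delta}_{n+1}\}$. Counting $j$-simplices for $j\leq n-r$, the surviving top of $P_{n+1-j}$ contributes $1$ and the surviving lower-dimensional parts of $P_s$ for $r\leq s\leq n-1-j$ contribute the geometric sum $\sum_{s=r}^{n-1-j}2^{n-1-s-j}=2^{n-r-j}-1$, yielding the total $2^{n-r-j}$ predicted by the theorem; the top-dimensional count and the case $j=n-r$ are trivially handled.

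The main obstacle is checking that the coned top simplex genuinely lies in $\Sd(T_r^{n+1})$. The facets of $\Delta_{n+1}$ removed to form $T_r^{n+1}$ are $F_0,\ldots,F_{r-1}$, opposite to $v_0,\ldots,v_{r-1}$ respectively, so the min face $\sigma_{r-1}=[v_0,\ldots,v_{r-1}]$ contains $v_j$ and hence is not contained in $F_j$ for any $j<r$. Thus the canonical labeling $\sigma_{s-1}=[v_0,\ldots,v_{s-1}]$ forced at level $n$ is precisely what is needed to pass the top of $P_r$, after coning, into $\Sd(T_r^{n+1})$ at level $n+1$; articulating this coherence requires reading the induction hypothesis as implicitly fixing a labeling of the removed facets compatible across dimensions, and this bookkeeping is the most delicate point of the proof.
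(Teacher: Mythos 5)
Your proof is correct and follows essentially the same route as the paper: induction on $n$, the decomposition $\Sd(\partial\Delta_{n+1})=\bigsqcup_{s=0}^{n+1}\Sd(T_s^n)$ from Corollary~\ref{Cor_dDelta}, coning the top simplex of the relevant tile's packing at the barycenter of $\Delta_{n+1}$, and the same geometric-sum count. The only difference is your explicit vertex-labeling bookkeeping, which is not needed: since the top simplex $[\hat\sigma_{s-1},\ldots,\hat\sigma_n]$ already lies in $\Sd(T_s^n)$ by the induction hypothesis, its minimal face avoids every removed facet of $\Delta_n$, and because the removed facets of $T_s^{n+1}$ are precisely the cones over these (Proposition~\ref{Prop_Pieces}), the coned simplex automatically lies in $\Sd(T_s^{n+1})$ without any labeling coherence across dimensions.
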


\begin{proof} We proceed by induction on the dimension $n>0$. If $n=1$, one checks the result, $\Sd(\Delta_1)\supset \Delta_1\sqcup \Delta_0, \Sd(T_1^1)\supset \Delta_1$ and $\Sd(T_2^1)\supset \Delta_0.$
Suppose now that the result holds true for every dimension $\leq n$. From Corollary~\ref{Cor_dDelta}, we know that $\partial \Delta_{n+1}$ is tileable and $\partial \Delta_{n+1}=\sqcup_{s=0}^{n+1}T_s^n$, so that $\Sd(\partial \Delta_{n+1})=\sqcup_{s=0}^{n+1}\Sd(T^n_s).$
The union of the packings given by the induction hypothesis provides a packing of the boundary of $\Sd(\Delta_{n+1})$ which contains two $n$-simplices. We replace the $n$-simplex contained in $\Sd(T_0^n)$ by its cone centered at the barycenter of $\Delta_{n+1}. $ We get in this way a packing of disjoint simplices in $\Sd(\Delta_{n+1})$ containing a simplex of dimension $n+1$, a simplex of dimension $n$ and $ 1+\sum_{s=0}^{n-1-j}2^{n-1-s-j}$ simplices of dimension $j$, $j\in \{0,\ldots,n-1\}$. Now, $ 1+\sum_{s=0}^{n-1-j}2^{n-1-s-j}=1+\sum_{s=0}^{n-1-j}2^s=2^{n-j}$.

Likewise, from Proposition~\ref{Prop_Pieces} we deduce that for every $s\in\{1,\ldots, n+1\}$, $\Sd(\partial T_{s}^{n+1})=\sqcup_{l=s}^{n+1}\Sd(T_l^n).$ The union of the  packings  given by the induction hypothesis provides a packing of simplices  in $\Sd(T_s^{n+1})\cap \Sd(\partial \Delta_{n+1})$ which contains one simplex of dimension $n+1-s$, of the form $[\hat{\sigma}_{s-1}\ldots, \hat{\sigma}_{n}],$ that we replace by its cone centered at the barycenter of $\Delta_{n+1}$.
We thus get a packing of disjoint simplices in  $\Sd(T_{s}^{n+1})$ which consists of 
one simplex of dimension $n+2-s,$ of the form $[\hat{\sigma}_{s-1},\ldots, \hat{\sigma}_{n+1}],$
and one simplex of dimension $n-s$  if $s<n+1$ together with, if $s\leq n-1$, $1+\sum_{l=0}^{n-1-s-j}2^{n-1-s-j-l}=2^{n-s-j}$ simplices of dimension $j$ for every $j\in\{0,\ldots,n-1-s\}.$
Finally, $\Sd(T_{n+2}^{n+1})$ contains the barycenter of $\Delta_{n+1},$ which can be written as $\hat{\sigma}_{n+1}.$ Hence the result.
\end{proof}

 We now able to prove Theorem~\ref{Thm_h0h1}.

\begin{proof}[Proof of Theorem~\ref{Thm_h0h1}.]
  By Corollary~\ref{Cor_SdK},  $\Sd(K)$ is tiled by $\Sd(\tT)$ so that the union of the packings given by Theorem~\ref{Thm_Packing} provides the result.
\end{proof}
  
  We may relax the condition to be disjoint in Theorems~\ref{Thm_Packing} and \ref{Thm_h0h1}, to get the following results.
  
  \begin{thm} \label{Thm_Pack}For every $n>0$, every $p\in\{1,\ldots, n-1\}$ and every $s\in \{0,\ldots, n+1\},$ $\Sd(T^n_s)$  contains a packing of simplices with  one simplex of dimension $n+1-s+p$ if $s\geq p+1$ or $2^{p-s}$ simplices of dimension $n$ if $s\leq p$ together with  $2^{n-1-s-j+p}$ simplices of dimension $j$, $j\in\{p,\ldots, \mbox{min} (n-1-s+p, n-1)\}$ if $s\leq n-1,$ in such a way that  the intersection of two simplices of this collection is of dimension less than $p$ and the intersection of each simplex with $\Sd(\partial \Delta_n)\setminus \Sd(T_s^n)$  is of dimension less than $p$. Moreover, the $(n+1-s+p)$-simplex  is of the form $[\hat{\sigma}_{s-p-1},\ldots,\hat{\sigma}_n],$ where for every $i\in \{s-p-1,\ldots,n \}$, $\sigma_i$ is an $i$-simplex of $\Delta_n.$
  \end{thm}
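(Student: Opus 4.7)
I would proceed by induction on $n$, paralleling the argument for Theorem~\ref{Thm_Packing}. The base case is $n=2$, $p=1$, which is handled by exhibiting the four required packings of $\Sd(T_s^2)$, $s\in\{0,1,2,3\}$, directly. For the inductive step I would rely on the decompositions $\partial\Delta_{n+1}=\bigsqcup_{l=0}^{n+1}T_l^n$ from Corollary~\ref{Cor_dDelta} and $T_s^{n+1}=T_{n+2}^{n+1}\sqcup\bigsqcup_{l=s}^{n+1}T_l^n$ from Proposition~\ref{Prop_Pieces}, together with the cone description of $T_s^{n+1}$ with apex $c=\hat\Delta_{n+1}$. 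The induction hypothesis furnishes, for each $l\in\{s,\ldots,n+1\}$, a packing $\mathcal{P}_l$ of $\Sd(T_l^n)$ whose simplices meet each other and the complement $\Sd(\partial\Delta_n)\setminus\Sd(T_l^n)$ in dimension $<p$. The packing of $\Sd(T_s^{n+1})$ is then assembled from the union $\bigsqcup_{l=s}^{n+1}\mathcal{P}_l$ along the boundary part, augmented by cones over $c$ of selected simplices; the cone operation raises each dimension by one, which matches the numerical shift between the target formulas in dimensions $n$ and $n+1$.

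Concretely, when $s\geq p+1$ the required flag $[\hat\sigma_{s-p-1},\ldots,\hat\sigma_{n+1}]$ is produced as the cone $c\ast[\hat\sigma_{s-p-1},\ldots,\hat\sigma_n]$ over the top simplex of $\mathcal{P}_s$ supplied by induction, and the lower layers come either from the $\mathcal{P}_l$'s directly or from cones over smaller flags. When $s\leq p$, the $2^{p-s}$ required top $(n+1)$-simplices are realized as cones $c\ast\sigma_i$ over an appropriate family of $n$-simplices $\sigma_i$ drawn from the packings $\mathcal{P}_l$ with $l\in\{s,\ldots,p\}$. The counts $2^{n-s-j+p}$ of $j$-simplices for $j$ in the prescribed range are then checked by summing the contributions of the boundary packings with those of the cones, using the numerical formulas provided by the induction.

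Three kinds of pairs must be analyzed in order to verify the intersection condition. Two simplices coming from $\mathcal{P}_l$ and $\mathcal{P}_{l'}$ with $l\neq l'$ meet only inside $\Sd(\partial T_l^n)\cap\Sd(\partial T_{l'}^n)\subset\Sd(\partial\Delta_n)\setminus\Sd(T_l^n)$, hence in dimension $<p$ by the second clause of the induction hypothesis; a cone $c\ast\sigma$ and an un-coned simplex $\tau$ meet exactly in $\sigma\cap\tau$, which is controlled by the same bound; finally, two cones $c\ast\sigma$ and $c\ast\tau$ meet in $c\ast(\sigma\cap\tau)$, of dimension $\dim(\sigma\cap\tau)+1$. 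The last pairing is the main obstacle, because the induction gives only $\dim(\sigma\cap\tau)<p$ while the bound needed after coning is $<p-1$. The way around it is to distribute the pre-cone simplices across distinct tiles $T_l^n$ for $l\in\{s,\ldots,p\}$: pairs with $l\neq l'$ fall into the first case and therefore automatically satisfy the stronger bound, so only intra-tile pairs remain to be treated, and these can be handled by exploiting the explicit flag structure of the top simplices delivered by the inductive construction. The second condition, on intersections with $\Sd(\partial\Delta_{n+1})\setminus\Sd(T_s^{n+1})$, follows from the same cone-over-base argument applied to the analogous hypothesis in dimension~$n$.
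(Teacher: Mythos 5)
You have correctly isolated the critical difficulty — that coning at the barycenter raises the dimension of a pairwise intersection by one, so an input packing whose simplices meet in dimension $<p$ only yields cones meeting in dimension $\leq p$ — but your proposed resolution does not close this gap. Distributing the pre-cone simplices over distinct boundary tiles does not give the stronger bound you need: for $l\neq l'$, a simplex of $\mathcal{P}_l$ and a simplex of $\mathcal{P}_{l'}$ are only known (by the second clause of the induction hypothesis, exactly as in your ``first case'') to meet in dimension $<p$, not $<p-1$, so after coning the bound again degrades to $\leq p$. Moreover the counts force intra-tile pairs anyway: when $s\leq p$ you need $2^{p-s}$ cone bases spread over only the tiles $l\in\{s,\ldots,p\}$, so as soon as $p-s\geq 2$ some tile must contribute at least two of them, and for these the same-$p$ hypothesis gives nothing better than $<p$ either; the ``flag structure'' is only asserted for the single top simplex of each $\mathcal{P}_l$ (and only when $l\geq p+1$), so it cannot control the many lower-dimensional simplices or the several top simplices of a single tile. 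As written, the inductive step therefore fails to produce a packing satisfying the intersection condition.

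The paper resolves exactly this point by changing the induction variable: it inducts on $p$, not on $n$. The packing of $\Sd(T_s^n)$ for the parameter $p$ is obtained by coning, at the barycenter of $\Delta_n$, packings of the boundary tiles $\Sd(T_l^{n-1})$, $l\in\{s,\ldots,n\}$, taken for the parameter $p-1$; since those meet each other and the complement in dimension $<p-1$, their cones meet in dimension $<p$, which is precisely the required bound. The base case $p=1$ is built the same way from the \emph{disjoint} packings of Theorem~\ref{Thm_Packing}, and the top flag simplex $[\hat{\sigma}_{s-p},\ldots,\hat{\sigma}_n]$ is then enlarged to $[\hat{\sigma}_{s-p-1},\ldots,\hat{\sigma}_n]$ by adjoining the barycenter of a facet $\sigma_{s-p-1}<\sigma_{s-p}$, which only adds an intersection with $\Sd(\partial\Delta_n)\setminus\Sd(T_s^n)$ of dimension $<p$. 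If you want to salvage your plan, you should strengthen your induction hypothesis in the same way (decrease $p$ together with the dimension of the boundary tiles); with the hypothesis as you stated it, the step from $n$ to $n+1$ cannot be made to work.
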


 \begin{proof} We proceed by induction on $p\in\{1,\ldots,n\}.$ If $p=1$, we first check the result for $n=1$. In this case,
 $\Sd(\Delta_1)$ contains exactly two simplices of dimension one intersecting each other at the barycenter of $\Delta_1$ and intersecting $\Sd(\partial \Delta_1)$ at a vertex. This provides a suitable  packing for  $\Sd(T_0^1),\Sd(T_1^1)$ and $\Sd(T_2^1)$.  
 
 If $n>1$, we know from Proposition~\ref{Prop_Pieces}  that for every $s\in\{ 1, \ldots, n\}$, $\Sd(\partial T_s^n)=\sqcup_{l=s}^{n}\Sd(T_l^{n-1}).$ The union for $l\in\{s,\ldots, n\}$ 
 of the packings given by Theorem~\ref{Thm_Packing}  provides a packing of disjoint simplices in  $\Sd(T_s^n)\cap \Sd(\partial \Delta_n)$ which contains an $(n-s)$-simplex of the form $[\hat{\sigma}_{s-1},\ldots,\hat{\sigma}_{n-1}]$, a simplex of dimension $n-1-s$  if $s\leq n-1$ and if $s<n-1$, $2^{n-1-s-j}$ simplices of dimension $j$ for every $j\in\{0,\ldots, n-2-s\}.$
 By replacing these simplices by their cones centered at the barycenter of $\Delta_n$, we get a collection of simplices in $\Sd(T_s^n)$ which contains one  $(n+1-s)$-simplex  of the form $[\hat{\sigma}_{s-1},\ldots,\hat{\sigma}_{n}]$  and if $s\leq n-1$, $2^{n-s-j}$ simplices of dimension $j$ for every $j\in\{1,\ldots, n-s\}.$
 Moreover, two simplices of this collection intersect at the barycenter of $\Delta_n$ and these simplices are contained in $\Sd(T^n_s).$ If $s\geq 2$, we remark that an $(n+1-s)$-simplex is of the form $[\hat{\sigma}_{s-1},\ldots,\hat{\sigma}_{n}]$, where $\sigma_i$ is an $i$-simplex of $\Delta_n$. By choosing a facet $\sigma_{s-2}$ of $\sigma_{s-1}$ and by replacing this $(n+1-s)$-simplex by the $(n+2-s)$-simplex $[\hat{\sigma}_{s-2},\ldots,\hat{\sigma}_{n}]$ we get the required packing. This last simplex indeed intersects $ \Sd(\partial \Delta_n)\setminus \Sd(T_s^n)$ at the vertex $\{\hat{\sigma}_{s-2}\}.$
 If $s=0$,  we likewise know from Proposition~\ref{Prop_Pieces} that  $ \Sd(\partial \Delta_n)=\sqcup_{l=0}^{n} \Sd(T_l^{n-1}).$ The union for $l\in \{0,\ldots,n\}
$ of the packings given by Theorem~\ref{Thm_Packing}  provides a packing of disjoint simplices in $\Sd(\partial \Delta_n)$ which contains two simplices of dimension $n-1$ and for every $j\in\{0,\ldots,n-2\}$, $2^{n-1-j}$
simplices of dimension $j$. By replacing these simplices by their cones centered at the barycenter of $\Delta_n$, we get a collection of simplices of $\Sd(T_0^n)$ that consists of two simplices  of dimension $n$ and of $2^{n-j}$ simplices of dimension $j$, $j\in\{1,\ldots, n-1\}$. Moreover, two such simplices intersect at the barycenter of $\Delta_n$. Finally, if $s=n+1$, the 1-simplex $[\hat{\sigma}_{n-1},\hat{\sigma}_{n}]$, where $\sigma_i\subset \Delta_n$ is of dimension $i$, intersects $\Sd(\partial \Delta_n)$ at the vertex $\{\hat{\sigma}_{s-1}\}$. This simplex gives the desired collection   and the result follows for $p=1$ and every $n>0$.

Now, let us suppose that the result holds true for every  $r\leq p-1$ and $n\geq p-1$ and let us prove it for $r=p$ and $n\geq p$. 
Let $2\leq p\leq n$ and $s\in\{0,\ldots, n\}$.
From Proposition~\ref{Prop_Pieces}, $\Sd(\partial T_s^n)=\sqcup_{l=s}^{n}\Sd(T_l^{n-1}).$  Let us equip each $\Sd(T_l^{n-1})$ with a packing given by the induction hypothesis applied to $p-1\leq n-1.$  The union of these packings gives a packing of simplices of $\Sd(\partial T_s^n)$ such that the intersection between two simplices is of dimension $\leq p-1$.  This packing contains $2^{n-2-s-j+p}$ simplices of dimension $j$ for every $j\in\{p-1, \min(n-2-s+p, n-2)\}$ if $s\leq n-1$ and one simplex of dimension $n-2-s+p$, one simplex of dimension $n-1-s+p$ if $s\geq p$ and $1+\sum_{l=s}^{p-1}2^{p-1-l}$ simplices of dimension $n-1$ if $s<p$. By replacing all these simplices by their cones centered at the barycenter of $ \Delta_n$, we get a packing of simplices in $\Sd(T_s^n)$
 which contains $2^{n-2-s-j+p}$ simplices of dimension $j+1$ for every   $j\in\{p-1, \min(n-2-s+p, n-2)\}$, that is to say $2^{n-1-s-j+p}$
simplices of dimension $j$ for $j\in\{p, \min(n-1-s+p, n-1)\}$ if $s\leq n-1$,   as well as $2^{p-s}$
simplices of dimension $n$ if $s<p$ and one simplex of dimension $n-s+p$ if $s\geq p$. If $s>p$, this last simplex, by construction, is of the form  $[\hat{\sigma}_{s-p},\ldots,\hat{\sigma}_{n}]$, where $\sigma_i\subset \Delta_n$ is of dimension $i$. By choosing a facet $\sigma_{s-p-1}$ of $\sigma_{s-p}$ we replace this $(n-s+p)$-simplex by the $(n+1-s+p)$-simplex $[\hat{\sigma}_{s-p-1},\ldots,\hat{\sigma}_{n}]$ to get the result for $s\leq n$. Indeed, by construction and the induction hypothesis, two disjoint simplices from this packing intersect each other in dimension $\leq p-1$ and each simplex intersect $\Sd(\Delta_n)\setminus \Sd(T_s^n)$ in dimension $\leq p-1$.
If $s=n+1$, every  simplex of the form $[\hat{\sigma}_{n-p},\ldots,\hat{\sigma}_{n}]\in \Sd(\Delta_n)$, where $\sigma_i\subset \Delta_n$ is of dimension $i$, gives a $p$-simplex which intersect $\Sd(\partial \Delta_n)$  at the $(p-1)$-simplex  $[\hat{\sigma}_{n-p},\ldots,\hat{\sigma}_{n-1}]$. A packing of $\Sd(T_{n+1}^n)$ reduced to this simplex 
is suitable. Hence the result.
  \end{proof}

\begin{cor}\label{Cor_PackJoint}
Let $K$ be a finite $n$-dimensional simplicial complex equipped with a tiling $\tT$ of $h$-vector $(h_0(\tT),\ldots, h_{n+1}(\tT))$ and let $1\leq p\leq n-1$. Then, it is possible to pack $h_{p+1}(\tT)+2^{p}\sum_{s=0}^{p}\frac{h_s(\tT)}{2^s}$ simplices of dimension $n$ in $\Sd(K)$ in such a way that they intersect each other  in dimension less than $p.$ Moreover, this packing can be completed by $h_{n+1-j+p}(\tT)+2^{n-1-j+p}\sum_{s=0}^{n-1-j+p}\frac{h_s(\tT)}{2^s}$ simplices of dimension $j$  having the same property,  $j\in \{p,\ldots, n-1\}$.
\end{cor}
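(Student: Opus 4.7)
The plan is to derive this corollary from Theorem~\ref{Thm_Pack} by applying it tile-by-tile inside the tiling $\Sd(\tT)$ of $\Sd(K)$ provided by Corollary~\ref{Cor_SdK}. That tiling uses each subdivided tile $\Sd(T_s^n)$ exactly $h_s(\tT)$ times, so I would invoke Theorem~\ref{Thm_Pack} inside every one of those copies, take the union of the resulting packings, and count the outcome.

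The fact that the union is a legitimate packing in the required sense follows from the two intersection guarantees in Theorem~\ref{Thm_Pack}: simplices within a single $\Sd(T_s^n)$ pairwise meet in dimension $<p$, and each of them meets $\Sd(\partial\Delta_n)\setminus\Sd(T_s^n)$ in dimension $<p$. For two packing simplices sitting inside distinct tiles $\Sd(T_s^n)$ and $\Sd(T_{s'}^n)$ of $\Sd(K)$, the interiors of the underlying tiles of $\tT$ are disjoint, so their intersection, if nonempty, lies in a common face of the two closed ambient $n$-simplices of $K$, hence in $\Sd(\partial\Delta_n)\setminus\Sd(T_s^n)$ as viewed from either side; the second guarantee of Theorem~\ref{Thm_Pack} then forces the dimension of this intersection to be at most $p-1$.

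Once the compatibility is in hand, the rest is bookkeeping. Theorem~\ref{Thm_Pack} contributes $2^{p-s}$ top-dimensional simplices from each $\Sd(T_s^n)$ with $s\leq p$, and a single $(n+1-s+p)$-simplex from each $\Sd(T_s^n)$ with $s\geq p+1$. The latter has dimension $n$ precisely when $s=p+1$, so summing the contributions for dimension $n$ yields
\[
h_{p+1}(\tT)+\sum_{s=0}^{p}2^{p-s}h_s(\tT)\;=\;h_{p+1}(\tT)+2^p\sum_{s=0}^{p}\frac{h_s(\tT)}{2^s}.
\]
For each intermediate dimension $j\in\{p,\ldots,n-1\}$, the ``$2^{n-1-s-j+p}$ $j$-simplices'' clause applies for $s\leq n-1-j+p$ (a bound which, since $j\geq p$, is automatically at most $n-1$, so the cap $s\leq n-1$ from Theorem~\ref{Thm_Pack} is inactive here), while the single $(n+1-s+p)$-simplex from $\Sd(T_s^n)$ lies in dimension $j$ exactly when $s=n+1-j+p\geq p+1$. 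Adding these two contributions yields
\[
h_{n+1-j+p}(\tT)+2^{n-1-j+p}\sum_{s=0}^{n-1-j+p}\frac{h_s(\tT)}{2^s},
\]
which is the announced count.

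The only point demanding attention, rather than any real difficulty, is correctly apportioning the two clauses of Theorem~\ref{Thm_Pack} between the top-dimensional count and the various intermediate $j$-dimensional counts without double-counting, and aligning the upper limit of $s$ in each sum; both checks are a direct reading of the statement of Theorem~\ref{Thm_Pack}.
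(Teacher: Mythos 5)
Your proposal is correct and is exactly the paper's route: the paper's own proof is the one-line observation that $\Sd(K)$ is tiled by $\Sd(\tT)$ (Corollary~\ref{Cor_SdK}) and that the union over the tiles of the packings from Theorem~\ref{Thm_Pack} gives the result, with the counting you carry out left implicit. One small wording caution: a face shared by two distinct tiles lies in the removed boundary $\Sd(\partial\Delta_n)\setminus\Sd(T_s^n)$ of \emph{at least one} of the two tiles (not necessarily both, since exactly one tile of the disjoint tiling can contain its interior), which is all that is needed for the second guarantee of Theorem~\ref{Thm_Pack} to bound the intersection dimension by $p-1$ as you conclude.
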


\begin{proof} By Corollary~\ref{Cor_SdK}, $\Sd(K)$ is tiled by $\Sd(\tT)$ so that  the union of the packings given by Theorem~\ref{Thm_Pack} provides the result.
\end{proof}

\begin{rem}
\begin{enumerate}
\item Corollary~\ref{Cor_PackJoint} remains valid in the case $p=0$ using the convention that simplices intersect in negative dimension when they are disjoint. The case $p=0$ then gives back Theorem~\ref{Thm_h0h1}.
\item For every  tiled finite $n$-dimensional simplicial complex $(K,\tT)$, e.g. $K=\Delta_n$, and every $0\leq p\leq n$, Theorem~\ref{Thm_h0h1} and Corollary~\ref{Cor_PackJoint} provide a sequence of packings in $\Sd^d(K), d>0$. Corollary~\ref{Cor_$h^n$} provides the asymptotic of the number of simplices in each dimension of this sequence.
\end{enumerate}
\end{rem}

We finally prove Theorem~\ref{Thm_mnp}. 

\begin{proof}[Proof of Theorem~\ref{Thm_mnp}.] 
 By definition, $\lambda_{p,\nu}(n)=\lim_{d\to \infty}\lambda_{p,\nu}^d(n)$ and $\lambda_{p,\nu}^d(n)=\frac{1}{(n+1)!^d}\mbox{max}_{L\in \mathcal{L}^{n,p}_d} M_{p,\nu}(L)$,  see \S~\ref{Sect_Refined}. However, $\stackrel{\circ}{\Delta}_n=T_{n+1}^n$ is tiled by a single tile and this tiling induces a tiling $\Sd^d(\tT)$ of $\Sd^d(\stackrel{\circ}{\Delta}_n)$  for every $d>0$, see Theorem~\ref{Thm_Pack}.  Corollary~\ref{Cor_PackJoint} then provides a subcomplex $L_d\in \mathcal{L}_d^{n,p}$, see \S~\ref{Sect_Refined}, which contains $h_p(\Sd^{d-1}(\tT))+2^{p-1}\sum_{i=0}^{p-1}\frac{h_i(\Sd^{d-1}(\tT))}{2^i}$  simplices of dimension $n$ together with 
$h_{n+p-j}(\Sd^{d-1}(\tT))+2^{n-2+p-j}\sum_{i=0}^{n-2+p-j}\frac{h_i(\Sd^{d-1}(\tT))}{2^i}$ simplices of dimension $j$,  $j\in\{p-1,\ldots, n-1\}$.

From Corollary~\ref{Cor_$h^n$}, we know that  for every $i\in \{0,\ldots, n\},$  $\frac{h_i(\Sd^{d-1}(\tT))}{(n+1)!^d}$ converges to $\frac{h_i^n}{(n+1)!}$ as $d$ grows to $+\infty$ and that $h_0^n=h^n_{n+1}=0.$
For every $p\in \{1,\ldots, n-1\}$ and every $j\in\{p+1,\ldots,n\},$
$M_{p,\nu}(\Delta_j)=\nu(1-\nu)(\nu^p+(1-\nu)^p)\binom{j}{p+1}$  by Lemma~\ref{Lem_Mpdelta}.  The result follows after the change of variables $j\to n+p-j.$
\end{proof}

\addcontentsline{toc}{part}{References}

\bibliography{LimiteEd}

\begin{thebibliography}{10}

\bibitem{BG}
V.~Beffara and D.~Gayet.
\newblock Percolation of random nodal lines.
\newblock {\em Publ. Math. Inst. Hautes \'Etudes Sci.}, 126:131--176, 2017.

\bibitem{B}
A.~Bj\"orner.
\newblock Shellable and {C}ohen-{M}acaulay partially ordered sets.
\newblock {\em Trans. Amer. Math. Soc.}, 260(1):159--183, 1980.

\bibitem{BW}
F.~Brenti and V.~Welker.
\newblock {$f$}-vectors of barycentric subdivisions.
\newblock {\em Math. Z.}, 259(4):849--865, 2008.

\bibitem{DPS}
E.~Delucchi, A.~Pixton, and L.~Sabalka.
\newblock Face vectors of subdivided simplicial complexes.
\newblock {\em Discrete Math.}, 312(2):248--257, 2012.

\bibitem{GW15}
D.~Gayet and J.-Y. Welschinger.
\newblock Expected topology of random real algebraic submanifolds.
\newblock {\em J. Inst. Math. Jussieu}, 14(4):673--702, 2015.

\bibitem{GW161}
D.~Gayet and J.-Y. Welschinger.
\newblock Betti numbers of random real hypersurfaces and determinants of random
  symmetric matrices.
\newblock {\em J. Eur. Math. Soc. (JEMS)}, 18(4):733--772, 2016.

\bibitem{GW162}
D.~Gayet and J.-Y. Welschinger.
\newblock Universal {C}omponents of {R}andom {N}odal {S}ets.
\newblock {\em Comm. Math. Phys.}, 347(3):777--797, 2016.

\bibitem{GW14}
D.~Gayet and J.-Y. Welschinger.
\newblock Betti numbers of random nodal sets of elliptic pseudo-differential
  operators.
\newblock {\em Asian J. Math.}, 21(5):811--840, 2017.

\bibitem{Koz}
D.~Kozlov.
\newblock {\em Combinatorial algebraic topology}, volume~21 of {\em Algorithms
  and Computation in Mathematics}.
\newblock Springer, Berlin, 2008.

\bibitem{LL}
A.~Lerario and E.~Lundberg.
\newblock Statistics on {H}ilbert's 16th problem.
\newblock {\em Int. Math. Res. Not. IMRN}, (12):4293--4321, 2015.

\bibitem{M}
J.~R. Munkres.
\newblock {\em Elements of algebraic topology}.
\newblock Addison-Wesley Publishing Company, Menlo Park, CA, 1984.

\bibitem{NS1}
F.~Nazarov and M.~Sodin.
\newblock On the number of nodal domains of random spherical harmonics.
\newblock {\em Amer. J. Math.}, 131(5):1337--1357, 2009.

\bibitem{NS2}
F.~Nazarov and M.~Sodin.
\newblock Asymptotic laws for the spatial distribution and the number of
  connected components of zero sets of {G}aussian random functions.
\newblock {\em Zh. Mat. Fiz. Anal. Geom.}, 12(3):205--278, 2016.

\bibitem{SW1}
N.~Salepci and J.-Y. Welschinger.
\newblock Asymptotic measures and links in simplicial complexes.
\newblock {\em arXiv:1706.02215}, 2017.

\bibitem{SW2}
N.~Salepci and J.-Y. Welschinger.
\newblock Asymptotic topology of random subcomplexes in a finite simplicial
  complex.
\newblock {\em arXiv:1706.02204, to appear in Int. Math. Res. Notices}, 2017.

\bibitem{SaWi}
P.~Sarnak and I.~Wigman.
\newblock Topologies of nodal sets of random band limited functions.
\newblock In {\em Advances in the theory of automorphic forms and their
  {$L$}-functions}, volume 664 of {\em Contemp. Math.}, pages 351--365. Amer.
  Math. Soc., Providence, RI, 2016.

\bibitem{V2}
O.~Viro.
\newblock Dequantization of real algebraic geometry on logarithmic paper.
\newblock In {\em European {C}ongress of {M}athematics, {V}ol. {I}
  ({B}arcelona, 2000)}, volume 201 of {\em Progr. Math.}, pages 135--146.
  Birkh\"auser, Basel, 2001.

\bibitem{V1}
O.~Y. Viro.
\newblock Gluing of plane real algebraic curves and constructions of curves of
  degrees {$6$} and {$7$}.
\newblock In {\em Topology ({L}eningrad, 1982)}, volume 1060 of {\em Lecture
  Notes in Math.}, pages 187--200. Springer, Berlin, 1984.

\end{thebibliography}
\bibliographystyle{abbrv}

Univ Lyon, Universit\'e Claude Bernard Lyon 1, CNRS UMR 5208, Institut Camille Jordan, 43 blvd. du 11 novembre 1918, F-69622 Villeurbanne cedex, France

{salepci@math.univ-lyon1.fr, welschinger@math.univ-lyon1.fr.}
\end{document}